\newtheorem{theorem}{Theorem}[section]
\newtheorem{lemma}[theorem]{Lemma}
\newtheorem{example}{Example}[section]
\numberwithin{equation}{section}
\numberwithin{figure}{section}
\numberwithin{table}{section}
\renewcommand{\vec}[1]{\mbox{\boldmath \small $#1$}}
    \renewcommand{\qed}{\hfill   \Box}
 \def\Box{\mbox{ }\rule[0pt]{1.5ex}{1.5ex}}
\begin{document}

\baselineskip=2pc

\begin{center}
{\bf Physical-constraints-preserving Lagrangian finite volume schemes for one- and two-dimensional
special relativistic hydrodynamics}
\end{center}


\centerline{Dan Ling$^1$, 
Junming Duan\footnote{HEDPS, CAPT \& LMAM, School of Mathematical Sciences,
Peking University, Beijing 100871, China.
E-mail: {\tt dan$\_$ling@math.pku.edu.cn}; {\tt duanjm@pku.edu.cn}},
Huazhong Tang$^{1,}$\footnote{School of Mathematics and Computational Science, Xiangtan University, Hunan Province, Xiangtan 411105, P.R. China. E-mail: {\tt hztang@math.pku.edu.cn}}
}

\vspace{.15in}

\centerline{\bf Abstract}

\bigskip

\baselineskip=1.4pc

This paper studies the physical-constraints-preserving (PCP) Lagrangian finite volume schemes
for one- and two-dimensional special relativistic hydrodynamic (RHD) equations.
First, the PCP property (i.e. preserving the positivity of the rest-mass density and the pressure and the bound of the velocity) is proved for the first-order accurate Lagrangian scheme with the HLLC Riemann solver and  forward Euler time discretization. The key is  that the intermediate states in the HLLC Riemann solver are shown to be admissible or PCP when the HLLC wave speeds are estimated suitably.
Then, the higher-order accurate schemes are proposed by using the high-order accurate strong stability preserving (SSP) time discretizations and the scaling PCP limiter as well as the WENO reconstruction.  
Finally, several one- and two-dimensional numerical experiments are
conducted to demonstrate the
accuracy and the effectiveness of the PCP Lagrangian schemes in solving the special RHD problems  involving strong discontinuities, or large Lorentz factor,
or low rest-mass density or low pressure etc.

\vspace{.05in}

\vfill

\noindent {\bf Keywords:}
Lagrangian scheme; physical-constraints-preserving scheme;
special relativistic hydrodynamics; high order accuracy.

\newpage
\baselineskip=2pc

\section{Introduction}
The paper is concerned with the study of  the physical-constraints-preserving (PCP) Lagrangian schemes
(which preserve the positivity of the rest-mass density and pressure, and the bound of
the fluid velocity) for one- and two-dimensional special relativistic hydrodynamic (RHD) equations.
The $d$ dimensional governing equations of the special RHDs is a
system of first-order quasi-linear partial differential equations, see e.g. \cite{LL1987},
and in the laboratory frame, it can be written
in the divergence form
\begin{equation}\label{eq1}
\frac{\partial \vec{U}}{\partial t}+\sum\limits_{\ell=1}^d\frac{\partial\vec{F}_\ell(\vec{U})}{\partial x_\ell}=0,
\end{equation}
with the conservative vector $\vec{U}$  and the flux $\vec{F}_\ell$,   defined respectively by
\begin{equation}\label{eq2}
\vec{U}=(D, \vec{m}, E)^T, \quad
\vec{F}_\ell=\left(Du_\ell, \vec{m}u_\ell+p\vec{e}_\ell, (E+p)u_\ell \right)^T,
\end{equation}
where $D$, $\vec{m}=(m_1,\cdots,m_d)$, $E$ and $p$ are the mass, momentum and
total energy relative to the laboratory frame and the gas pressure, respectively,
$\vec{e}_\ell$ is the row vector denoting the $\ell$th row of the unit matrix of size $d$.
 In the Lagrangian framework, (\ref{eq1}) can be expressed as the   integral form
\begin{equation}\label{eq4}
\frac{d}{dt}\int_{\varOmega(t)}\vec{U}dV
+\int_{\partial\varOmega(t)}\vec{\mathcal F}_{\vec{n}}(\vec{U})ds=\vec{0},~
\vec{\mathcal F}_{\vec{n}}(\vec{U})=\left(0,p\vec{n}^T,p\vec{u}\cdot\vec{n}^T\right)^T,
\end{equation}
where $\varOmega(t)$ is the mass volume with the  boundary $\partial\varOmega(t)$ and
$\vec{n}$ denotes the unit outward normal of $\partial\varOmega(t)$.
The conservative vector $\vec{U}$ can be explicitly related to
the primitive variables $\vec{V}=(\rho,\vec{u},p)^{T}$ by
\begin{equation}\label{eq3}
D=\rho\gamma, \quad\vec{m}=Dh\gamma\vec{u}, \quad E=Dh\gamma-p.
\end{equation}
Here $\rho$ denotes the proper rest-mass density,
and the Lorentz factor $\gamma=(1-|\vec{u}|^2)^{-1/2}$ and the specific enthalpy
$h=1+e+\frac{p}{\rho}$ when units are normalized so that the speed of light is $c=1$.
The symbol $e$ is the specific internal energy and the fluid velocity $\vec{u}=(u_1,\cdots,u_d)$.
The system (\ref{eq1}) should be closed by an additional equation of state (EOS). Our discussion in this paper will be restricted to the perfect gas, whose EOS is formulated as follows
\begin{equation}\label{eq55}
p=(\Gamma-1)\rho e,
\end{equation}
with the adiabatic index $\Gamma\in(1, 2]$.
Such restriction on $\Gamma$ is reasonable under the compressibility assumptions and
the adiabatic index is taken as 5/3 for the mildly relativistic case and 4/3 for
the ultra-relativistic case. For the EOS \eqref{eq55}, the sound speed $c_s$ is given by
\begin{equation}\label{cs}
c_s=\sqrt{ {\Gamma p}/{(\rho h)}},
\end{equation}
which satisfies the following inequality
\begin{equation}
c_s^2=\frac{\Gamma p}{\rho h}=\frac{\Gamma p}{\rho+\frac{p}{\Gamma-1}+p}=\frac{(\Gamma-1)\Gamma p}{(\Gamma-1)\rho+\Gamma p}<\Gamma-1<1.
\end{equation}
It is an important result and will be used in proving the PCP property of
the HLLC Riemann solver.

The inverse of  \eqref{eq3} cannot be explicitly given and involves solving  the nonlinear equation, e.g. the
pressure equation
\begin{equation}\label{pre}
E+p=D\gamma+\frac{\Gamma}{\Gamma-1}p\gamma^2,
\end{equation}
for the EOS \eqref{eq55}, where the Lorentz factor $\gamma$ has been expressed as $\gamma=\big(1-|\vec{m}|^2/(E+p)^2\big)^{-1/2}$. Once $p$ is obtained by solving the equation \eqref{pre}, the rest-mass density $\rho$, the specific enthalpy $h$,
and the velocity $\vec u$ can be orderly calculated as follows
\begin{equation}\label{add1}
\rho=\frac{D}{\gamma},\ h=1+\frac{\Gamma p}{(\Gamma-1)\rho},\ \vec u=\frac{\vec m}{Dh}.
\end{equation}

The system \eqref{eq1} takes into account the relativistic description for the dynamics
of the fluid (gas) at nearly speed of light. The relativistic fluid flow appears in investigating
numerous astrophysical phenomena from stellar to galactic scales, e.g. coalescing neutron stars, core
collapse supernovae, formation of black holes, active galactic nuclei,
superluminal jets and gamma-ray bursts,  etc.
Due to the relativistic effect, especially the appearance of the Lorentz factor, the nonlinearity of the system \eqref{eq1}
becomes much stronger than the non-relativistic case so that its analytic treatment is extremely difficult and challenging even though
some literature studied the exact solution to the RHD equations in the special case, for
instance the one-dimensional Riemann problem or the isentropic problem \cite{marti1,pant,lora}.
A primary and powerful approach to improve our understanding of the physical
mechanisms in the RHDs is the numerical simulation. In comparison with
the non-relativistic case, the numerical difficulties are coming from strongly
nonlinear coupling between the RHD equations \eqref{eq1}, which leads to no explicit
expression of the primitive variable vector $\vec V = (\rho, \vec u, p)^T$ and the flux $\vec F_{\ell}$ in
terms of $\vec U$, and some physical constraints such as $\rho > 0, p > 0$, $E\geq D$ and $|\vec u| < c = 1$, etc.

The pioneering work in this field may trace back to the finite difference code
via the artificial viscosity
for the spherically symmetric general RHD equations in the Lagrangian coordinates \cite{may1,may2}.
The first attempt to solve RHD equations numerically in the Eulerian coordinates was made in \cite{wilson} by using an
explicit finite difference method with monotonic transport and artificial viscosity technique. After those, the numerical
study of the RHD equations has attracted more attention, and various  exact or approximate Riemann solvers have been successively proposed, for instance the HLL (Harten-Lax-van Leer) method \cite{schneider},
the flux corrected transport method \cite{duncan}, the two-shock solvers \cite{balsara,dai},
the Roe solver \cite{eulderink}, the upwind scheme \cite{falle}, the kinetic schemes \cite{yang,kunik}, the flux-splitting method \cite{donat}, the HLLC (HLL-Contact) scheme
\cite{mignone}, and so forth.
 In addition, some higher-order accurate  shock-capturing  schemes  were also developed
 for solving the RHD equations, such as
ENO (essentially non-oscillatory) and weighted ENO (WENO) methods \cite{dolezal,zanna,tchekhovskoy},
  finite volume local evolution Galerkin method
\cite{wu2014b},
 piecewise parabolic methods
\cite{marti3,mignone2}, adaptive mesh refinement method \cite{zw}, discontinuous Galerkin (DG) method \cite{radice},
direct Eulerian GRP (generalized Riemann problem) schemes \cite{yz1,yz2,wu2014,wu2016}, adaptive moving mesh methods \cite{he1,he2},
space-time conservation element and solution element method \cite{qamar} and Runge-Kutta DG methods with WENO
 limiter \cite{zhao} and so on. The readers are  also
referred to the review articles \cite{marti2,font,marti2015}
and references therein.

Unfortunately, in general, it cannot be proved that the above schemes
are PCP, in other words, their solutions  belong to the
physically admissible set ${\mathcal G}=\{\vec{U}=(D,\vec m,E)^T~\big|$ $\rho>0,~p>0, ~|\vec u|< c=1\}$.
Although they have been used to simulate some RHD flows successfully,
they are still confronted with enormous risks of  the calculation failure
 in solving the RHD problems with large Lorentz factor or low density or pressure, or strong discontinuity.
If the negative  density or pressure, or the larger velocity than the speed of light are numerically obtained,
the eigenvalues of the Jacobian matrix or the Lorentz factor may become imaginary, leading directly to the ill-posedness of the discrete problem.
In practice, such  nonphysical numerical solutions are usually simply replaced with a ``close'' and ``physical'' one by performing recalculation with more diffusive schemes and smaller CFL number until the numerical solutions become physical, see e.g. \cite{hughes,zw}.
Obviously, to some extent such operations are not
scientifically reasonable, so it is necessary and significant to develop high-order accurate physical-constraints-preserving (PCP) numerical schemes, whose solutions can satisfy those intrinsic physical constraints or properties: the positivity of the rest-mass density and the pressure and the bound of the velocity.
Recently, there exist some works on the PCP numerical schemes
for the RHD equations, such as the finite difference WENO schemes \cite{wu2015}, non-central DG methods with
bound-preserving limiter \cite{qin}, and central DG methods \cite{wu2017}.
Moreover, the admissible states and physical-constraints-preserving numerical schemes were first well-studied for the special relativistic magnetohydrodynamics \cite{wu2017b,wu2018}, and the provably  physical-constraint-preserving methods were successfully designed for the general RHDs \cite{wu2017a}.
Motivated by \cite{wu2017b,wu2018}, positivity-preserving  numerical schemes  was  analyzed  for the non-relativistic ideal magnetohydrodynamics in \cite{wu2018a,wu2018b}.
It is worthy of mentioning
that all of the aforementioned schemes with the PCP property are  formulated in the Eulerian framework.

The aim of this paper is to study the PCP Lagrangian schemes with the HLLC solver for the one- and two-dimensional RHD equations \eqref{eq1} with $d=1,2$.
The HLLC approximate Riemann solver has  a simpler form in the Lagrangian framework.
First, we  prove that  the intermediate states in the HLLC Riemann solver are admissible or PCP (that is, the
rest-mass density and pressure are positive and the fluid velocity magnitude is less than the
speed of light) when the HLLC wave speeds are estimated suitably.
Next, we derive the first-order PCP Lagrangian scheme with the HLLC solver  and then
propose the higher-order accurate PCP Lagrangian scheme via the WENO reconstruction
and the scaling PCP limiter as well as the strong stability preserving (SSP) high order time discretization.

The paper is organized as follows.
Section \ref{section:1D-PCP}  gives the 1D PCP Lagrangian schemes.
The PCP properties of intermediate states in the HLLC Riemann solver are   proved,
and then the first- and high-order PCP Lagrangian
schemes are proposed.
Section \ref{section:2D-PCP} presents the 2D PCP Lagrangian
schemes. It needs more techniques to derive the PCP properties of the HLLC Riemann solver
for the $x_i$-split 2D RHD equations,
because an important difference from the 1D RHD equations comes from a purely multi-dimensional
relativistic feature that the flow regions across the shock or rarefaction wave in the split 2D RHD equations are nonlinearly coupled through the Lorentz factor which is also built in terms of the tangential velocities.
Some  numerical experiments  are conducted in Section \ref{section:NumResult} to demonstrate the
 performance such as the conservation, PCP property, accuracy of our Lagrangian schemes.
 Finally some remarkable conclusions are summarized in Section \ref{section:conclusion}.

\section{One-dimensional PCP Lagrangian schemes}\label{section:1D-PCP}
This section considers the Lagrangian finite volume schemes for the special relativistic hydrodynamics (RHD) equations \eqref{eq1} or \eqref{eq4}  with $d=1$, here
the notations $\vec{ F}_1$, $u_1$ and $x_1$ are replaced with $\vec{ F}$, $u$ and $x$, respectively,
and $\vec{\mathcal F}_{\vec{n}}(\vec{U})=\left(0,p,pu\right)^T=:\vec{\mathcal F}(\vec{U})$.

Assume that the time interval $\{t>0\}$ is discretized as:
$t_{n+1}=t_n+\Delta t^n$, $n=0,1,2,\cdots$,
 where $\Delta t^n$ is the time step size at $t=t_n$ and will be determined
 by the CFL type condition.
The (dynamic) computational domain at $t=t_n$  is divided into $N$  cells: $I_i^n =[x^n_{i-\frac{1}{2}},x^n_{i+\frac{1}{2}}]$ with the sizes $\Delta x_i^n =x^n_{i+\frac{1}{2}}-x^n_{i-\frac{1}{2}}$
for $i=1,\cdots,N$.
The mesh node $x^n_{i+\frac{1}{2}}$ is moved with
the fluid velocity $u^n_{i+\frac{1}{2}}$, that is,
$$
x^{n+1}_{i+\frac{1}{2}}=x^n_{i+\frac{1}{2}}+\Delta t^n u^n_{i+\frac{1}{2}},\ i=1,\cdots,N.
$$
For the RHD system \eqref{eq1} or \eqref{eq4} with $d=1$,
 the  Lagrangian finite volume  scheme with the Euler forward time discretization
\begin{equation}\label{eq8}
\overline{\vec{U}}_{i}^{n+1}\Delta x_i^{n+1}=\overline{\vec{U}}_{i}^{n}\Delta x_i^{n}-\Delta t^n
\left(\widehat{\vec{\mathcal F}}(\vec{U}_{i+\frac{1}{2}}^{-},\vec{U}_{i+\frac{1}{2}}^{+})-\widehat{\vec{\mathcal F}}(\vec{U}_{i-\frac{1}{2}}^{-},
\vec{U}_{i-\frac{1}{2}}^{+})\right),
\end{equation}
where $\overline{\vec{U}}_i^{n}$ and $\overline{\vec{U}}_i^{n+1}$ are approximate
cell average values of the conservative vectors  at $t_n$ and $t_{n+1}$ over the cells $I_i^n$ and $I_i^{n+1}$,
respectively, the numerical flux
$\widehat{\vec{\mathcal F}}$ is a function of two variables and satisfies the consistency
 $\widehat{\vec{\mathcal F}}(\vec U,\vec U)= \left(0,p,pu\right)^T$,
and $\vec{U}_{i+\frac{1}{2}}^{\mp}$ are the left- and right-limited approximations of $\vec{U}$
 at $x_{i+\frac{1}{2}}$, respectively, and obtained by using the initial reconstruction technique and $\{\overline{\vec{U}}_i^{n}\}$.

In this paper, the numerical flux $\widehat{\vec{\mathcal F}}(\vec U^-,\vec U^+)$ is computed by means of  the HLLC Riemann solver of \eqref{eq1} with $d=1$ \cite{mignone}, but in a coordinate system that is moving with the intermediate wave (i.e. contact discontinuity) speed $s^*$ (see below).
Let us consider the  Riemann problem
\begin{equation}\label{eq11}
 \vec{U}_t+\widetilde{\vec{F}}(\vec{U})_{\xi}=0,\ \
 \vec{U}(\xi,0)=\left\{\begin{array}{ll}
\vec{U}^{-},&\xi<0,\\
\vec{U}^{+},&\xi>0,
\end{array}\right.
\end{equation}
where $\xi=x-s^* t$ and
\begin{equation}\label{eq12}
\vec{U}=(D,m,E)^T,\quad \widetilde{\vec{F}}(\vec{U})=\left(
D(u-s^\ast), m(u-s^\ast)+p, E(u-s^\ast)+pu
\right)^T.
\end{equation}
\begin{figure}[h!]
\centering
\includegraphics[width=4.2in]{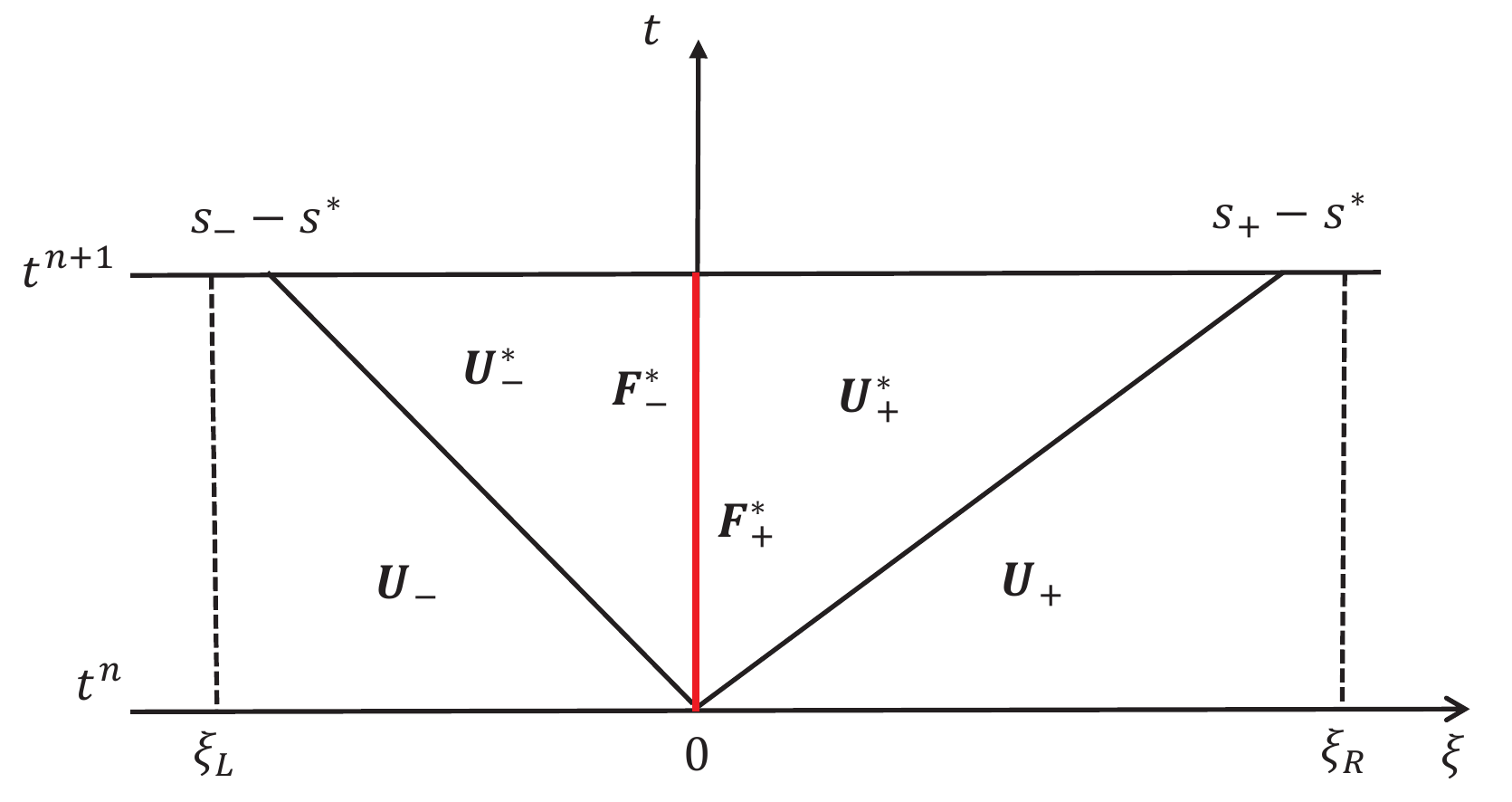}
\caption{Structure of the exact or HLLC approximate solution of the Riemann problem \eqref{eq11}.}
\label{fig-hllc}
\end{figure}
 Figure \ref{fig-hllc} shows corresponding wave structure of the exact or HLLC approximate solution of the Riemann problem \eqref{eq11}.
 There are three wave families associated with the eigenvalues of $\partial \widetilde{\vec{F}}/\partial \vec{U}$,
 and the contact discontinuity is always in accordance with the vertical axis $\xi=0$.
Two constant intermediate states between the left and right  acoustic waves are
denoted by $\vec{U}^{\ast,-}$ and $\vec{U}^{\ast,+}$, respectively, corresponding fluxes
are denoted by $\widetilde{\vec F}^{\ast,\pm}$.
The symbols $s^{-}$ and $s^{+}$ denote the smallest and largest speeds of the acoustic waves, respectively,
and are specifically set as
\begin{equation}\label{ws}
s^{-}=\min\big(s_{\min}(\vec{U}^{-}),s_{\min}(\vec{U}^{+})\big),~~s^{+}=\max\big(s_{\max}(\vec{U}^{-}),s_{\max}(\vec{U}^{+})\big),
\end{equation}
where $s_{\max}(\vec{U})$ and $s_{\min}(\vec{U})$ are the maximum and minimum eigenvalues of
the Jacobian matrix $\partial {\vec F}/\partial {\vec U}$, and
can be  explicitly given by
\begin{equation}\label{eq31}
s_{\min}(\vec{U})=\frac{u-c_s}{1-c_su},
~~~s_{\max}(\vec{U})=\frac{u+c_s}{1+c_su}.
\end{equation}
Integrating
the conservation law in \eqref{eq11} over the control volume
$[\xi_L,0]\times[t_n,t_{n+1}]$, the left portion of Figure \ref{fig-hllc}, gives
 the flux $\widetilde{\vec {F}}^{\ast,-}$ along the $t$-axis as follows
\begin{equation}\label{eq50}
\widetilde{\vec {F}}^{\ast,-}=\widetilde{\vec {F}}(\vec U^-)-(s^{-}-s^{\ast})\vec{U}^{-}
-\frac{1}{\Delta t^n}\int_{\Delta t^n(s^{-}-s^{\ast})}^{0}\vec{U}(\xi,t_{n+1})d\xi.
\end{equation}
 Similarly, performing the same operation on the control volume $[0,\xi_R]\times[t_n,t_{n+1}]$ gives
\begin{equation}\label{eq51}
\widetilde{\vec {F}}^{\ast,+}=\widetilde{\vec {F}}(\vec U^+)-(s^{+}-s^{\ast})\vec{U}^{+}
+\frac{1}{\Delta t^n}\int_{0}^{\Delta t^n(s^{+}-s^{\ast})}\vec{U}(\xi,t_{n+1})d\xi.
\end{equation}
By means of the the definitions of the intermediate states $\vec{U}^{\ast,\pm}$ \cite{toro}
\begin{equation*}
\vec{U}^{\ast,\pm} =\frac{1}{\Delta t^n(s^{\pm}-s^{\ast})}\int_{0}^{\Delta t^n(s^{\pm}-s^{\ast})}\vec{U}(\xi,t_{n+1})d\xi,
\end{equation*}
one can rewrite \eqref{eq50} and \eqref{eq51} as
\begin{equation}\label{eq53}
\widetilde{\vec {F}}^{\ast,\pm}=\widetilde{\vec {F}}(\vec U^{\pm})-(s^{\pm}-s^{\ast})(\vec{U}^{\pm}-\vec{U}^{\ast,\pm}),
\end{equation}
which are the Rankine-Hugoniot conditions for the right and left waves in Figure \ref{fig-hllc}.
If  assuming
$\vec{U}^{\ast,\pm}=(D^{\ast,\pm},m^{\ast,\pm},E^{\ast,\pm})^T$ and
$\widetilde{\vec {F}} ^{\ast,\pm}=\big(D^{\ast,\pm}(u^{\ast,\pm}-s^{\ast}),m^{\ast,\pm}(u^{\ast,\pm}-s^{\ast})+p^{\ast,\pm},
E^{\ast,\pm}(u^{\ast,\pm}-s^{\ast})+p^{\ast,\pm}u^{\ast,\pm}\big)^T$,
  \eqref{eq53} can explicitly give
\begin{equation}\label{add7}
\begin{aligned}
&D^{\ast,\pm}(s^{\pm}-u^{\ast,\pm})=D^{\pm}(s^{\pm}-u^{\pm}),\\
&m^{\ast,\pm}(s^{\pm}-u^{\ast,\pm})=m^{\pm}(s^{\pm}-u^{\pm})+p^{\ast,\pm}-p^{\pm},\\
&E^{\ast,\pm}(s^{\pm}-u^{\ast,\pm})=E^{\pm}(s^{\pm}-u^{\pm})+p^{\ast,\pm}u^{\ast,\pm}-p^{\pm}u^{\pm}.\\
\end{aligned}
\end{equation}
From those, one  obtains
\begin{equation}\label{eq15}
(A^{\pm}+s^{\pm}p^{\ast,\pm})u^{\ast,\pm}=B^{\pm}+p^{\ast,\pm},
\end{equation}
where
\begin{equation}\label{eq67}
A^{\pm}=s^{\pm}E^{\pm}-m^{\pm},~~~B^{\pm}=m^{\pm}(s^{\pm}-u^{\pm})-p^{\pm}.
\end{equation}

Because the  system \eqref{add7} is underdetermined,
one can impose  the  conditions on the continuity of the pressure and velocity across the contact discontinuity,
i.e.
\begin{equation}\label{add8}
p^{\ast,-}=p^{\ast,+}=:p^{\ast},~~u^{\ast,-}=u^{\ast,+}=:u^{\ast} (=s^{\ast}).
\end{equation}
Combing \eqref{eq15} with \eqref{add8} gives
\begin{equation}\label{eq16}
(1-s^{\pm}s^\ast)p^\ast=s^\ast A^{\pm}-B^{\pm}.
\end{equation}
Eliminating $p^*$ gives a quadratic equation in terms of $s^\ast$  as follows
\begin{equation}\label{eq17}
\mathcal{C}_0+\mathcal{C}_1s^\ast+\mathcal{C}_2(s^\ast)^2=0,
\end{equation}
where
\begin{equation*}
\mathcal{C}_0=B ^{+}-B^{-},~~\mathcal{C}_1=A^{-}+s^{+}B^{-}-A^{+}-s^{-}B^{+},~~
\mathcal{C}_2=s^{-}A^{+}-s^{+}A^{-}.
\end{equation*}
From \eqref{eq17}, one has
\begin{equation}\label{eq19}
s^{\ast}=u^{\ast}=\frac{-\mathcal{C}_1-\sqrt{\mathcal{C}_1^2-4\mathcal{C}_0\mathcal{C}_2}}{2\mathcal{C}_2},
\end{equation}
since the wave speed $s^\ast$ should be less than
 the speed of light $c=1$ \cite{mignone}.
Then  \eqref{eq16} further gives
\begin{equation}\label{eq66}
p^{\ast}=\frac{s^\ast A^{-}-B^{-}}{1-s^{-}s^\ast}=\frac{s^\ast A^{+}-B^{+}}{1-s^{+}s^\ast}.
\end{equation}

From \eqref{add7}, one can obtain $\vec U^{\ast,\pm}$ and then
substitute $\vec U^{\ast,\pm}$ into
\eqref{eq53} to give the HLLC flux (approximating $\widetilde{\vec{F}}(\vec{U})$) in the Lagrangian framework
$\widehat{\vec{\mathcal{F}}}=\widetilde{\vec {F}}^{\ast,+}=\widetilde{\vec {F}}^{\ast,-}$.
It is worth noticing that $p^{\ast}$ in \eqref{eq66} and   $u^\ast$  in
 \eqref{eq19}
are different from the pressure and the velocity calculated from the resulting $\vec U^{\ast,\pm}$ via \eqref{pre} and \eqref{add1},
so in general
$p^{\ast}\neq p(\vec{U}^{\ast,\pm})$, $u^{\ast}\neq u(\vec{U}^{\ast,\pm})$  and
$\widetilde{\vec {F}}^{\ast,\pm}\neq \widetilde{\vec{F}}(\vec{U}^{\ast,\pm})$.

For the above HLLC solver, the following conclusion holds.
\begin{lemma}\label{lem3}
For the given $\vec{U}^{\pm}=(D^{\pm},m^{\pm},E^{\pm})^{T}$, if the wave speed  $s^{\pm}$ are estimated  in \eqref{ws}  and $s^{\ast}$ is the speed of the contact discontinuity,
then $A^{\pm}$ and $B^{\pm}$  defined in \eqref{eq67} satisfy
 the following
inequalities
\begin{enumerate}[(\romannumeral1)]
\item $A^{-}<0,~~A^{+}>0$;
\item $A^{-}-B^{-}<0,~~~A^{+}+B^{+}>0$;
\item $s^{-}A^{-}-B^{-}>0,~~s^{+}A^{+}-B^{+}>0$;
\item $s^{+}A^{-}-B^{-}<0,~~s^{-}A^{+}-B^{+}<0$;
\item $s^{-}<u^{\pm}<s^{+},~~s^{-}<s^{\ast}<s^{+}$;
\item $4(A^{\pm})^2-(s^{\pm}A^{\pm}+B^{\pm})^2>0$;
\item $(A^{\pm})^2-(B^{\pm})^2-(D^{\pm})^2(s^{\pm}-u^{\pm})^2>0$.
\end{enumerate}
\end{lemma}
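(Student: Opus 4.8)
The plan is to push every quantity through to the primitive variables $(\rho,u,p)$ and to reduce each of (i)--(vii) to an inequality for the equation of state \eqref{eq55}. Introduce $W:=Dh\gamma=\rho h\gamma^{2}>0$, so that $E=W-p$, $m=Wu$, and
\[
A^{\pm}=W^{\pm}(s^{\pm}-u^{\pm})-s^{\pm}p^{\pm},\qquad
B^{\pm}=W^{\pm}u^{\pm}(s^{\pm}-u^{\pm})-p^{\pm}.
\]
Two scalar facts will be used repeatedly. First, $c_s(\rho h-p)>p$: with $x=p/\rho$ this reads $\Gamma(\Gamma-1+x)^{2}>x(\Gamma-1)(\Gamma-1+\Gamma x)$, and the difference of the two sides equals $\Gamma(\Gamma-1)^{2}+(\Gamma-1)(\Gamma+1)x+\Gamma(2-\Gamma)x^{2}>0$ for $\Gamma\in(1,2]$. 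Second, $c_s^{2}<\Gamma-1$, already recorded in the excerpt. I also note, from \eqref{eq31} and \eqref{ws}, the chain $-1<s^{-}\le\min_{\pm}s_{\min}(\vec{U}^{\pm})<u^{\pm}<\max_{\pm}s_{\max}(\vec{U}^{\pm})\le s^{+}<1$, which is the first half of (v) and moreover gives $1-s^{\pm}u^{\pm}>0$, $1-(s^{\pm})^{2}>0$, $1-s^{-}s^{+}>0$.

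For (i) and (ii) I would use monotonicity in the HLLC speeds: $\partial_{s^{-}}A^{-}=E^{-}>0$, $\partial_{s^{+}}A^{+}=E^{+}>0$, $\partial_{s^{-}}(A^{-}-B^{-})=E^{-}-m^{-}>0$, $\partial_{s^{+}}(A^{+}+B^{+})=E^{+}+m^{+}>0$, where $E\pm m=W(1\pm u)-p=\rho h/(1\mp u)-p>\rho h/2-p>0$ again uses $\Gamma\le2$. Hence it suffices to test each inequality at the extreme admissible speed $s^{-}=s_{\min}(\vec{U}^{-})$ resp.\ $s^{+}=s_{\max}(\vec{U}^{+})$; plugging in \eqref{eq31} and simplifying, each of the four required sign conditions becomes an inequality implied by $|u|<1$ together with $c_s(\rho h-p)>p$. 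Item (iii) needs no estimate: a direct computation yields the identity $s^{\pm}A^{\pm}-B^{\pm}=W^{\pm}(s^{\pm}-u^{\pm})^{2}+p^{\pm}\bigl(1-(s^{\pm})^{2}\bigr)>0$.

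For (iv), view $s^{+}A^{-}-B^{-}$ as a function of $s^{+}$: its $s^{+}$-derivative is $A^{-}<0$ by (i), so it is largest at the smallest admissible $s^{+}$, namely $s_{\max}(\vec{U}^{-})$; viewed then as a function of $s^{-}$ (the ``$-$''-state fixed) its derivative has the sign of $c_s^{-}(\rho^{-}h^{-}-p^{-})-u^{-}p^{-}>0$, so it is largest at $s^{-}=s_{\min}(\vec{U}^{-})$, where a short simplification (using $1-s_{\min}s_{\max}=(1+c_s^{2})(1-u^{2})/(1-(c_su)^{2})$ and $\rho h c_s^{2}=\Gamma p$) turns it into a positive multiple of $p^{-}\bigl((c_s^{-})^{2}-(\Gamma-1)\bigr)<0$; the bound $s^{-}A^{+}-B^{+}<0$ is symmetric. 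Now the second half of (v) follows: $\phi(s):=(sA^{-}-B^{-})(1-s^{+}s)-(sA^{+}-B^{+})(1-s^{-}s)$ is exactly the quadratic $\mathcal C_{0}+\mathcal C_{1}s+\mathcal C_{2}s^{2}$ of \eqref{eq17}, and (iii)--(iv) give $\phi(s^{-})>0>\phi(s^{+})$, so the root $s^{\ast}$ selected by \eqref{eq19} lies strictly in $(s^{-},s^{+})$. For (vi), factor $4(A^{\pm})^{2}-(s^{\pm}A^{\pm}+B^{\pm})^{2}=\bigl((2-s^{\pm})A^{\pm}-B^{\pm}\bigr)\bigl((2+s^{\pm})A^{\pm}+B^{\pm}\bigr)$: in the ``$+$'' case $(2-s^{+})A^{+}-B^{+}=(s^{+}A^{+}-B^{+})+2(1-s^{+})A^{+}>0$ and $(2+s^{+})A^{+}+B^{+}=(A^{+}+B^{+})+(1+s^{+})A^{+}>0$, while in the ``$-$'' case $(2-s^{-})A^{-}-B^{-}<(1-s^{-})A^{-}<0$ and $(2+s^{-})A^{-}+B^{-}<2(1+s^{-})A^{-}<0$ (using $B^{-}>A^{-}$ and $B^{-}<s^{-}A^{-}$ from (ii)--(iii)); the product is positive in both cases.

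The main obstacle is (vii). Expanding $(A^{\pm})^{2}-(B^{\pm})^{2}$ in primitive variables and using $W^{2}(1-u^{2})=D^{2}h^{2}$ gives, with $\Delta:=s^{\pm}-u^{\pm}$,
\[
(A^{\pm})^{2}-(B^{\pm})^{2}-(D^{\pm})^{2}\Delta^{2}
=C^{\pm}\Delta^{2}+2(p^{\pm})^{2}u^{\pm}\Delta-(p^{\pm})^{2}\bigl(1-(u^{\pm})^{2}\bigr),
\]
where $C^{\pm}=(D^{\pm})^{2}\bigl((h^{\pm})^{2}-1\bigr)-2D^{\pm}h^{\pm}\gamma^{\pm}p^{\pm}+(p^{\pm})^{2}$, and $C^{\pm}>0$ because $\rho(h^{2}-1)-2hp=p\bigl(\Gamma+h(2-\Gamma)\bigr)/(\Gamma-1)>0$ for $\Gamma\in(1,2]$. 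This is an upward parabola in $\Delta$ with negative constant term, so its two roots have opposite signs. It is therefore enough to evaluate it at the extreme endpoint --- $s^{+}=s_{\max}(\vec{U}^{+})$, so $\Delta=\Delta_{0}>0$, in the ``$+$'' case and $s^{-}=s_{\min}(\vec{U}^{-})$, so $\Delta=-\Delta_{0}<0$, in the ``$-$'' case --- and to check positivity there; positivity on the whole admissible range then follows because $\pm\Delta_{0}$ lies beyond the larger (resp.\ smaller) root. Using $D^{2}(1-u^{2})=\rho^{2}$, $2Dh\gamma p(1-u^{2})=2\rho hp$ and $1-s_{\max/\min}^{2}=(1-u^{2})(1-c_s^{2})/(1\pm c_su)^{2}$, that endpoint value simplifies to a positive multiple of $c_s^{2}\bigl(\rho^{2}(h^{2}-1)-2\rho hp\bigr)-p^{2}(1-c_s^{2})$, which the equation of state turns into a positive multiple of $\rho(\Gamma+1)+\Gamma(2-\Gamma)p/(\Gamma-1)>0$. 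The delicate part is exactly this last piece --- keeping the $\pm$-bookkeeping straight and confirming that $\pm\Delta_{0}$ genuinely falls on the outer side of the relevant root rather than between the two roots --- and that is where the argument needs the most care.
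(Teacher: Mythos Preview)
Your argument is correct and follows the same overall line as the paper's Appendix~A: reduce each inequality to the extreme admissible wave speed $s^{-}=s_{\min}(\vec U^{-})$ or $s^{+}=s_{\max}(\vec U^{+})$ via monotonicity, then verify positivity using the EOS identities $c_s^{2}<\Gamma-1$ and (your additional) $c_s(\rho h-p)>p$. Two of your shortcuts are genuinely nicer than the paper's. For~(vi) the paper expands each factor $f_1^{\pm},f_2^{\pm}$ in primitive variables and bounds them separately, whereas your one-line reduction $(2-s^{+})A^{+}-B^{+}=(s^{+}A^{+}-B^{+})+2(1-s^{+})A^{+}$ etc.\ gets the same conclusion directly from (i)--(iii). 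For~(vii) the paper locates the axis of symmetry $s_0^{\pm}$ of the parabola and argues monotonicity on $(-1,s_{\min}^{-}]$ resp.\ $[s_{\max}^{+},1)$; your observation that the parabola in $\Delta=s-u$ is upward with roots of opposite sign (since the value at $\Delta=0$ is $-p^{2}(1-u^{2})<0$) makes the ``outer side of the relevant root'' check automatic once the endpoint value is positive, and your worry in the last sentence is unfounded: $\Delta_0>0$ together with positivity at $\Delta_0$ forces $\Delta_0$ to exceed the positive root, and then all $\Delta\ge\Delta_0$ work.

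There is one small gap, in the second half of~(v). Your sign-change argument $\phi(s^{-})>0>\phi(s^{+})$ shows the quadratic \eqref{eq17} has \emph{a} root in $(s^{-},s^{+})$, but does not yet identify it with the specific root $s^{\ast}$ of \eqref{eq19}, which is singled out by the condition $|s^{\ast}|<1$. To close this you can also evaluate $\phi(\pm1)$: using $A^{+}-B^{+}=W^{+}(s^{+}-u^{+})(1-u^{+})+(1-s^{+})p^{+}>0$ and $A^{-}+B^{-}=W^{-}(s^{-}-u^{-})(1+u^{-})-(1+s^{-})p^{-}<0$ one gets $\phi(-1)>0$ and $\phi(1)<0$, so regardless of the sign of $\mathcal C_{2}$ the second root lies outside $(-1,1)$, and the unique root in $(-1,1)$ is the one you found in $(s^{-},s^{+})$. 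The paper handles this differently: it takes $|s^{\ast}|<1$ as given (citing \cite{mignone}), deduces from \eqref{eq16} that $s^{\ast}A^{-}-B^{-}$ and $s^{\ast}A^{+}-B^{+}$ share a sign, and hence that $s^{\ast}$ lies between $B^{-}/A^{-}$ and $B^{+}/A^{+}$, both of which are then shown to sit in $(s^{-},s^{+})$ via (i), (iii), (iv).
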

\noindent
The proof is presented in Appendix  \ref{appendix-A}.

\subsection{First-order accurate scheme}

For the first-order accurate Lagrangian scheme, $\vec{U}_{i\pm\frac{1}{2}}^{\mp}$ are calculated
 from the reconstructed piecewise constant function,
namely, $\vec{U}_{i\pm\frac{1}{2}}^{\mp}$ in the scheme \eqref{eq8} are
defined by
\begin{equation}\label{eq9}
\vec{U}_{i+\frac{1}{2}}^{-}=\overline{\vec{U}}_{i}^{n}, \quad
\ \vec{U}_{i+\frac{1}{2}}^{+}=\overline{\vec{U}}_{i+1}^{n}.
\end{equation}
Those form some local Riemann problems at the endpoints of each cell naturally, see the diagrammatic sketch in Figure \ref{scheme}.
The endpoints of the cell   will be evolved by
\begin{equation}\label{eq8-2}
\begin{aligned}
&x_{i+\frac{1}{2}}^{n+1}=x_{i+\frac{1}{2}}^{n}+\Delta t^nu_{i+\frac{1}{2}}^{\ast},\ \
u_{i+\frac{1}{2}}^{\ast}=s_{i+\frac{1}{2}}^{\ast}.
\end{aligned}
\end{equation}

Similar to the Godunov scheme, under a suitable CFL condition (see the coming Theorem \ref{theorem2.3}),
the waves in two neighboring local Riemann problems (e.g. centered at the points $x_{i-\frac{1}{2}}^n$ and $x_{i+\frac{1}{2}}^n$) do not interact with each other within
a time step, so
  $\overline{\vec{U}}^{n+1}_i$ in the scheme \eqref{eq8} can be equivalently
derived  by  exactly integrating the approximate Riemann solutions over
the cell $[x^{n+1}_{i-\frac{1}{2}}, x^{n+1}_{i+\frac{1}{2}}]$, i.e.
\begin{equation}\label{eq21}
\overline{\vec{U}}^{n+1}_i=\frac{1}{\Delta x_i^{n+1}}\left(\int_{x_{i-\frac{1}{2}}^{n+1}}^{x_1}R_h(x/t,\overline{\vec{U}}_{i-1}^{n},
\overline{\vec{U}}_{i}^{n})dx+\int_{x_1}^{x_2}\overline{\vec{U}}_i^ndx+
\int_{x_2}^{x_{i+\frac{1}{2}}^{n+1}}R_h(x/t,\overline{\vec{U}}_i^n,\overline{\vec{U}}_{i+1}^n)dx\right),
\end{equation}
where $R_h(x/t,\overline{\vec{U}}^n_{j-1},\overline{\vec{U}}^n_j)$  is the approximate Riemann
solution related to the states $\overline{\vec{U}}^n_{j-1}$
and $\overline{\vec{U}}^n_j$,  $j=i,i+1$. For the above HLLC Riemann solver, the integration of $R_h(x/t,\overline{\vec{U}}^n_{i-1},\overline{\vec{U}}^n_i)$
will be equal to $(x_1-x_{i-\frac{1}{2}}^{n+1})\vec{U}^{\ast,+}$ with  $\vec{U}^{\ast,+}$ is
computed from  $\overline{\vec{U}}^n_{i-1}$ and
$\overline{\vec{U}}^n_i$,
while the integration of  $R(x/t,\overline{\vec{U}}^n_i,\overline{\vec{U}}^n_{i+1}$)  will be $(x_{i+\frac{1}{2}}^{n+1}-x_2)\vec{U}^{\ast,-}$ with
$\vec{U}^{\ast,-}$ derived from  $\overline{\vec{U}}^n_i$ and
$\overline{\vec{U}}^n_{i+1}$.  Thus the first-order HLLC scheme is equivalent to
\begin{equation}\label{eq:hllc1d}
\overline{\vec{U}}^{n+1}_i=\frac{1}{\Delta x_i^{n+1}}\left((x_1-x_{i-\frac{1}{2}}^{n+1})\vec{U}^{\ast,+}
+(x_2-x_1)\overline{\vec{U}}_{i}^{n}+(x_{i+\frac{1}{2}}^{n+1}-x_2)\vec{U}^{\ast,-}\right).
\end{equation}
\begin{figure}[h!]
\centering
\includegraphics[width=5.0in]{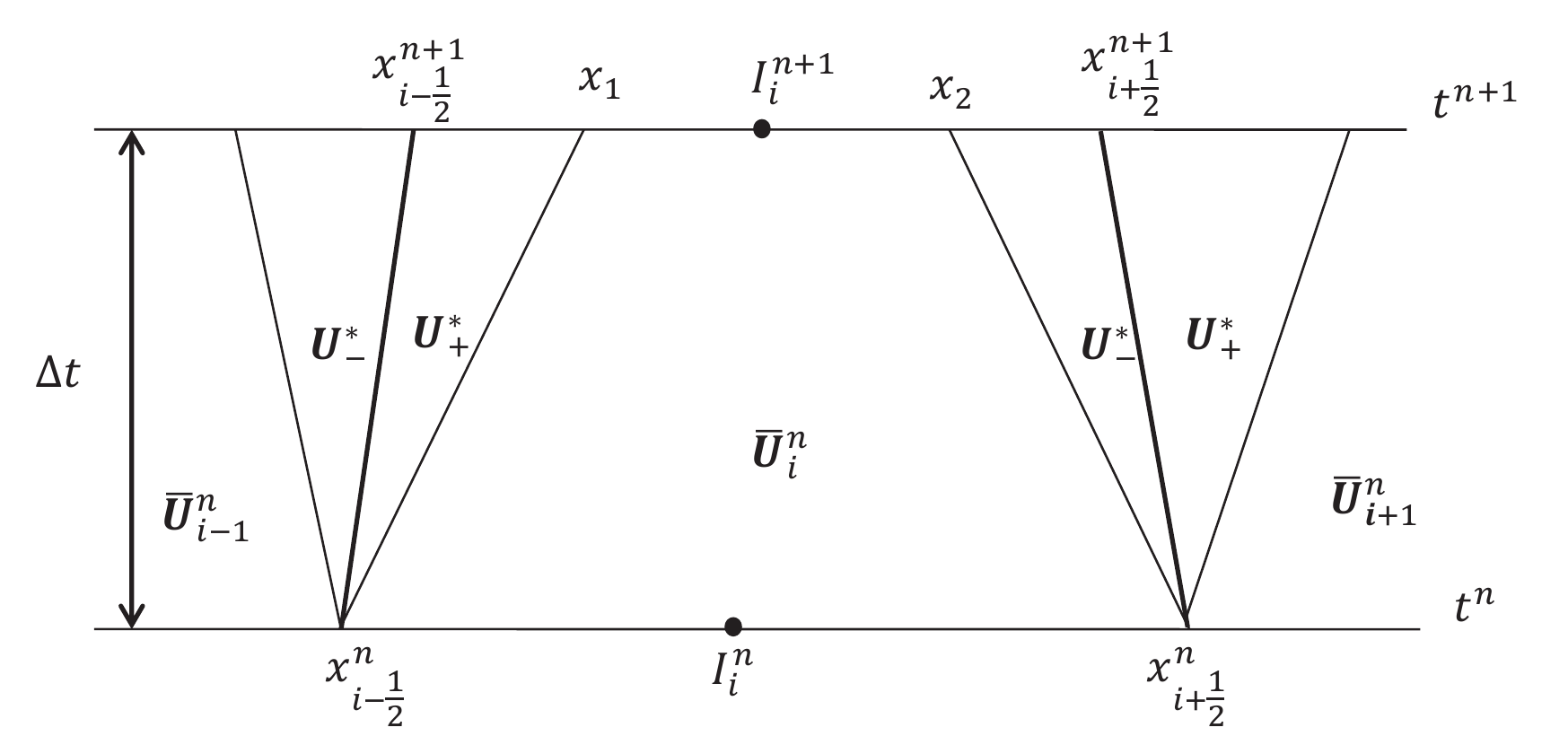}
\caption{Diagrammatic sketch of the neighboring Riemann problems.}
\label{scheme}
\end{figure}
Therefore in order to prove that  the first-order HLLC scheme \eqref{eq:hllc1d} is PCP,
one just needs to show the intermediate states $\vec{U}^{\ast,-},\vec{U}^{\ast,+}\in \mathcal G$ due to the convexity of $\mathcal G$.

\begin{theorem}\label{theorem2.2}
For $\vec{U}^{\pm}\in \mathcal G$ and the wave speeds $s^\pm$ defined in \eqref{ws},
the intermediate states $\vec{U}^{\ast,\pm}$ obtained by the HLLC Riemann solver are PCP,
that is to say,  $\vec{U}^{\ast,\pm}\in \mathcal G$.
\end{theorem}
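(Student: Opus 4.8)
The plan is to reduce the claim to a purely algebraic statement about two points. Since $\mathcal{G}$ is convex (as noted above), it suffices to prove $\vec{U}^{\ast,-},\vec{U}^{\ast,+}\in\mathcal{G}$. I would first replace the implicit description of $\mathcal{G}$ by the equivalent algebraic one: for the EOS \eqref{eq55} with $\Gamma\in(1,2]$, a state $\vec{U}=(D,m,E)^T$ lies in $\mathcal{G}$ if and only if $D>0$ and $E-\sqrt{D^2+m^2}>0$. The necessity of this is immediate from $m=(E+p)u$, $E+p=Dh\gamma$, $\gamma^2u^2=\gamma^2-1$ together with $\Gamma\le 2$; the sufficiency is the standard analysis of the pressure equation \eqref{pre}, which I would cite or insert beforehand. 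Hence it is enough to check, for each sign, (a) $D^{\ast,\pm}>0$, and (b) $E^{\ast,\pm}>\sqrt{(D^{\ast,\pm})^2+(m^{\ast,\pm})^2}$.

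Next I would use the relations \eqref{add7} with $u^{\ast,\pm}=s^\ast$ and $p^{\ast,\pm}=p^\ast$. Multiplying the components of $\vec{U}^{\ast,\pm}$ by $s^\pm-s^\ast$ and simplifying with the identity $m^\pm=(E^\pm+p^\pm)u^\pm$ and the definitions \eqref{eq67}, one obtains
\begin{equation*}
(s^\pm-s^\ast)D^{\ast,\pm}=D^\pm(s^\pm-u^\pm),\qquad(s^\pm-s^\ast)m^{\ast,\pm}=B^\pm+p^\ast,\qquad(s^\pm-s^\ast)E^{\ast,\pm}=A^\pm+s^\ast p^\ast.
\end{equation*}
By Lemma \ref{lem3}(v), $s^+-s^\ast>0>s^--s^\ast$ and $s^-<u^\pm<s^+$, so $D^+(s^+-u^+)>0$ and $D^-(s^--u^-)<0$, and dividing gives (a). Dividing both sides of (b) by $|s^\pm-s^\ast|$ and tracking these signs, (b) becomes: $A^++s^\ast p^\ast>0$ and $(A^++s^\ast p^\ast)^2>(D^+(s^+-u^+))^2+(B^++p^\ast)^2$ for the right state, and the analogous statement for the left state, with all superscripts $+$ changed to $-$ and the first inequality reversed.

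For the quadratic inequalities I would expand and use \eqref{eq16} in the form $s^\ast A^\pm-B^\pm=p^\ast(1-s^\pm s^\ast)$, which gives
\begin{equation*}
(A^\pm+s^\ast p^\ast)^2-(D^\pm)^2(s^\pm-u^\pm)^2-(B^\pm+p^\ast)^2=\big[(A^\pm)^2-(B^\pm)^2-(D^\pm)^2(s^\pm-u^\pm)^2\big]+(p^\ast)^2\big[(s^\ast-s^\pm)^2+1-(s^\pm)^2\big].
\end{equation*}
The first bracket is positive by Lemma \ref{lem3}(vii), and the second bracket is nonnegative because $|s^\pm|<1$ (which follows from \eqref{eq31} and the sound-speed bound $c_s<1$); hence both quadratic inequalities hold.

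The main remaining obstacle is the sign of $A^\pm+s^\ast p^\ast$. Substituting $p^\ast=(s^\ast A^\pm-B^\pm)/(1-s^\pm s^\ast)$ and using $|s^\pm|,|s^\ast|<1$, so that $1-s^\pm s^\ast>0$, one finds that $A^\pm+s^\ast p^\ast$ has the same sign as $A^\pm(1-s^\pm s^\ast+(s^\ast)^2)-s^\ast B^\pm$. I would then invoke Lemma \ref{lem3}(i) for $\mathrm{sign}(A^\pm)$, the sandwich bounds $s^-A^+<B^+<s^+A^+$ and $s^+A^-<B^-<s^-A^-$ (consequences of Lemma \ref{lem3}(i),(iii),(iv)), and the two elementary facts $1-s^\pm s^\ast+(s^\ast)^2>0$ and $1-(s^++s^-)s^\ast+(s^\ast)^2>0$ (the latter a quadratic in $s^\ast$ with negative discriminant since $|s^++s^-|<2$); a short case distinction on $\mathrm{sign}(s^\ast)$ then yields $A^++s^\ast p^\ast>0$ and $A^-+s^\ast p^\ast<0$. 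Combined with the quadratic bounds this establishes (b), and together with (a) we conclude $\vec{U}^{\ast,\pm}\in\mathcal{G}$.
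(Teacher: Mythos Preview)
Your proof is correct and proceeds along the same overall skeleton as the paper---reduce membership in $\mathcal G$ to the algebraic conditions $D^{\ast,\pm}>0$ and $(E^{\ast,\pm})^2-(D^{\ast,\pm})^2-(m^{\ast,\pm})^2>0$ together with $E^{\ast,\pm}>0$, then verify these from the Rankine--Hugoniot relations and Lemma~\ref{lem3}---but the execution differs in two places.

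First, for the sign of $E^{\ast,\pm}$ (equivalently of $A^\pm+s^\ast p^\ast$) the paper rewrites $E^{\ast,\pm}(s^\pm-s^\ast)(1-s^\pm s^\ast)$ as a completed square in $s^\ast$ and invokes item (vi) of Lemma~\ref{lem3}. You instead substitute $p^\ast=(s^\ast A^\pm-B^\pm)/(1-s^\pm s^\ast)$, bound $B^\pm$ between $s^- A^\pm$ and $s^+ A^\pm$ via items (iii)--(iv), and close with a short case split on $\mathrm{sign}(s^\ast)$ together with the observation that $1-(s^++s^-)s^\ast+(s^\ast)^2$ has negative discriminant. This avoids item (vi) entirely.

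Second, for the quadratic inequality the paper multiplies by $(A^\pm+s^\pm p^\ast)^2$ and obtains, after a somewhat long expansion, a sum of three manifestly nonnegative terms. Your substitution of $s^\ast A^\pm-B^\pm=p^\ast(1-s^\pm s^\ast)$ directly into $(A^\pm+s^\ast p^\ast)^2-(B^\pm+p^\ast)^2$ produces the compact identity
\[
(A^\pm+s^\ast p^\ast)^2-(D^\pm)^2(s^\pm-u^\pm)^2-(B^\pm+p^\ast)^2=\mathcal{K}^\pm+(p^\ast)^2\big[(s^\ast-s^\pm)^2+1-(s^\pm)^2\big],
\]
with $\mathcal{K}^\pm$ the quantity in item (vii), which is a cleaner route to the same conclusion. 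Your argument thus uses only items (i), (iii), (iv), (v), (vii) of Lemma~\ref{lem3}, whereas the paper also relies on (vi); conversely the paper avoids any case analysis on $s^\ast$.
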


\begin{proof}
Following \cite{wu2015}, one has to prove
$D^{\ast,\pm}>0$, $E^{\ast,\pm}>0$, and
$(E^{\ast,\pm})^2-(D^{\ast,\pm})^2-(m^{\ast,\pm})^2>0$.

(\romannumeral1)
Due to (\romannumeral4) of Lemma \ref{lem3} and the first equality in \eqref{add7}, it is easy to get $D^{\ast,\pm}>0$.

(\romannumeral2)
For the left or  right state, from (\ref{add7}) and (\ref{eq15}),  one has
\begin{align}
\mathcal{A}^{\pm}&=E^{\ast,\pm}(s^{\pm}-s^{\ast})(1-s^{\pm}s^{\ast})\notag\\
&=(E^{\pm}(s^{\pm}-u^{\pm})+p^{\ast}s^{\ast}-p^{\pm}u^{\pm})(1-s^{\pm}s^{\ast})\notag\\
&=A^{\pm}\left(s^{\ast}-\frac{s^{\pm}A^{\pm}+B^{\pm}}{2A^{\pm}}\right)^2+\frac{4(A^{\pm})^2-(s^{\pm}A^{\pm}+B^{\pm})^2}{4A^{\pm}},\label{eq90}
\end{align}
where $A^{\pm}$ and $B^{\pm}$ are defined in \eqref{eq67}.
Using (\romannumeral1) of Lemma \ref{lem3}  gives
\begin{equation}\label{eq68}
\begin{aligned}
\mathcal{A}^{-}&=A^{-}\left(s^{\ast}-\frac{s^{-}A^{-}+B^{-}}{2A^{-}}\right)^2+\frac{4(A^{-})^2-(s^{-}A^{-}+B^{-})^2}{4A^{-}}
\le\frac{4(A^{-})^2-(s^{-}A^{-}+B^{-})^2}{4A^{-}},\\
\mathcal{A}^{+}&=A^{+}\left(s^{\ast}-\frac{s^{+}A^{+}+B^{+}}{2A^{+}}\right)^2+\frac{4(A^{+})^2-(s^{+}A^{+}+B^{+})^2}{4A^{+}}
\ge\frac{4(A^{+})^2-(s^{+}A^{+}+B^{+})^2}{4A^{+}}.\\
\end{aligned}
\end{equation}
Combining \eqref{eq90},~\eqref{eq68} with (\romannumeral1) and (\romannumeral5) in Lemma \ref{lem3} further  yields
$\mathcal{A}^{-}<0$ and $\mathcal{A}^{+}>0$, which imply
$$E^{\ast,-}>0,~~~E^{\ast,+}>0.$$

(\romannumeral3)
Define
$$\mathcal{B}^{\pm}=\big[(E^{\ast,\pm})^2-(D^{\ast,\pm})^2-(m^{\ast,\pm})^2\big]
(s^{\pm}-s^\ast)^2(A^{\pm}+s^{\pm}p^\ast)^2.$$
Using (\ref{eq15}) gives
\begin{align}
\mathcal{B}^{\pm}&=(A^{\pm}+s^{\pm}p^\ast)^2\big[(A^{\pm}
+p^\ast s^\ast)^2-(D^{\pm})^2(s^{\pm}-u^{\pm})^2-(B^{\pm}+p^\ast)^2\big]\notag\\
&=(A^{\pm}+s^{\pm}p^\ast)^2\bigg((p^\ast)^2(s^\ast)^2+2A^{\pm}p^{\ast}s^{\ast}-(2B^{\pm}+p^{\ast})p^{\ast}+\mathcal{K}^{\pm}\bigg)\notag\\
&=[1-(s^{\pm})^2](p^\ast)^4+2B^{\pm}(1-(s ^{\pm})^2)(p^{\ast})^3
+2s^{\pm}A^{\pm}\mathcal{K}^{\pm}p^\ast+(A^{\pm})^2\mathcal{K}^{\pm}\notag\\
&~~~+\bigg((A^{\pm}-s^{\pm}B^{\pm})^2+(1-(s^{\pm})^2)(B^{\pm})^2+(s^{\pm})^2\mathcal{K}^{\pm}\bigg)(p^\ast)^2\notag\\
&=[1-(s^{\pm})^2](B^{\pm}+p^{\ast})^2+(A^{\pm}-s^{\pm}B^{\pm})^2(p^{\ast})^2
+\mathcal{K}^{\pm}(A^{\pm}+s^{\pm}p^{\ast})^2,
\label{eq25}
\end{align}
where
$$\mathcal{K}^{\pm}=(A^{\pm})^2-(B^{\pm})^2-(D^{\pm})^2(s^{\pm}-u^{\pm})^2.$$
The conclusion (\romannumeral6) in Lemma \ref{lem3} can tells us $\mathcal{K}^{\pm}>0$ and then
$\mathcal{B}^{\pm}>0$, which imply
$$(E^{\ast,\pm})^2-(D^{\ast,\pm})^2-(m^{\ast,\pm})^2>0.$$
The proof is completed.
\end{proof}

Based the above discussion, one can draw the following conclusion.
\begin{theorem}\label{theorem2.3}
If  $\{\overline{\vec{U}}_i^{n}\in \mathcal G,\forall i=1,
\cdots,N\}$ and the wave speeds $s^\pm$ estimated by \eqref{ws},
then $\overline{\vec{U}}_i^{n+1}$ obtained by
the first-order  Lagrangian scheme \eqref{eq8} with
\eqref{eq9} and HLLC solver
belong to the admissible state set $\mathcal G$
under   the following time step restriction
\begin{equation}\label{eq:dt1}
\Delta t^n\le\lambda\min\limits_{i}\left\{\Delta x_i^n/\max(|s_{\min}(\overline{\vec{U}}_i^n)|,
|s_{\max}(\overline{\vec{U}}_i^n)|)\right\},
\end{equation}
where the CFL number $\lambda\le\frac{1}{2}$.
\end{theorem}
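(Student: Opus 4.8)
The plan is to reduce the statement, via the equivalent update formula \eqref{eq:hllc1d}, to the assertion that $\overline{\vec U}_i^{\,n+1}$ is a convex combination of states already known to lie in $\mathcal G$, and then to invoke Theorem \ref{theorem2.2} together with the convexity of $\mathcal G$. The only genuinely new ingredient beyond the preceding discussion is that the CFL restriction \eqref{eq:dt1} with $\lambda\le\frac12$ guarantees that the local Riemann fans do not interact within one time step, which is exactly what makes the Godunov-type identity \eqref{eq21}--\eqref{eq:hllc1d} legitimate.

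First I would verify the wave non-interaction. At $t_n$ each interface $x^n_{j+\frac12}$ generates a local Riemann problem with data $\overline{\vec U}^n_j,\overline{\vec U}^n_{j+1}$, whose HLLC fan is confined between the speeds chosen in \eqref{ws}, with $s^-_{j+\frac12}<s^\ast_{j+\frac12}<s^+_{j+\frac12}$ by item (v) of Lemma \ref{lem3}. Taking $x_1=x^n_{i-\frac12}+\Delta t^n s^+_{i-\frac12}$ and $x_2=x^n_{i+\frac12}+\Delta t^n s^-_{i+\frac12}$ as the breakpoints appearing in \eqref{eq21}, I must establish the ordering
\[
x^{n+1}_{i-\frac12}\ \le\ x_1\ \le\ x_2\ \le\ x^{n+1}_{i+\frac12}.
\]
The outer inequalities are immediate: since $x^{n+1}_{i\mp\frac12}=x^n_{i\mp\frac12}+\Delta t^n s^\ast_{i\mp\frac12}$ and $s^\ast_{i\mp\frac12}$ lies strictly between $s^\mp_{i\mp\frac12}$ and $s^\pm_{i\mp\frac12}$ (item (v) of Lemma \ref{lem3}), one gets $x^{n+1}_{i-\frac12}<x_1$ and $x_2<x^{n+1}_{i+\frac12}$. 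The middle inequality $x_1\le x_2$ is equivalent to $\Delta t^n\big(s^+_{i-\frac12}-s^-_{i+\frac12}\big)\le\Delta x_i^n$; here I would bound the two extremal fan speeds by the cell-wise maxima, $s^+_{i-\frac12}\le\max_{k\in\{i-1,i\}}|s_{\max}(\overline{\vec U}^n_k)|$ and $-s^-_{i+\frac12}\le\max_{k\in\{i,i+1\}}|s_{\min}(\overline{\vec U}^n_k)|$, so that their sum is at most twice the stencil maximum of $\max(|s_{\min}|,|s_{\max}|)$, and then close the estimate with \eqref{eq:dt1} and $\lambda\le\frac12$ (the factor $\frac12$ being precisely what accommodates the two opposing fans). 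As a by-product, $\Delta x_i^{n+1}=x^{n+1}_{i+\frac12}-x^{n+1}_{i-\frac12}\ge x_1-x^{n+1}_{i-\frac12}=\Delta t^n\big(s^+_{i-\frac12}-s^\ast_{i-\frac12}\big)>0$, so the updated cell is nondegenerate. I expect this non-interaction estimate to be the main obstacle, in particular keeping the bookkeeping honest on a genuinely nonuniform mesh.

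With this ordering, formula \eqref{eq:hllc1d} exhibits
\[
\overline{\vec U}_i^{\,n+1}=\theta_1\,\vec U^{\ast,+}_{i-\frac12}+\theta_2\,\overline{\vec U}_i^{\,n}+\theta_3\,\vec U^{\ast,-}_{i+\frac12},
\]
with $\theta_1=(x_1-x^{n+1}_{i-\frac12})/\Delta x_i^{n+1}$, $\theta_2=(x_2-x_1)/\Delta x_i^{n+1}$, $\theta_3=(x^{n+1}_{i+\frac12}-x_2)/\Delta x_i^{n+1}$, all nonnegative by the ordering above and summing to $1$ by telescoping. Here $\vec U^{\ast,+}_{i-\frac12}$ is the right intermediate state of the Riemann problem with data $\overline{\vec U}^n_{i-1},\overline{\vec U}^n_i$, and $\vec U^{\ast,-}_{i+\frac12}$ the left intermediate state of the one with data $\overline{\vec U}^n_i,\overline{\vec U}^n_{i+1}$. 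Since $\overline{\vec U}^n_{i-1},\overline{\vec U}^n_i,\overline{\vec U}^n_{i+1}\in\mathcal G$ by hypothesis and the wave speeds are those of \eqref{ws}, Theorem \ref{theorem2.2} yields $\vec U^{\ast,\pm}_{i\mp\frac12}\in\mathcal G$; combining this with $\overline{\vec U}^n_i\in\mathcal G$ and the convexity of $\mathcal G$ gives $\overline{\vec U}_i^{\,n+1}\in\mathcal G$. The two boundary cells are treated identically once the ghost values furnished by the prescribed boundary condition are themselves in $\mathcal G$. Thus, granting the non-interaction estimate, the PCP property is nothing more than convexity plus Theorem \ref{theorem2.2}.
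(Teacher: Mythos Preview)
Your proposal is correct and follows exactly the route the paper takes: the paper reduces Theorem \ref{theorem2.3} to the Godunov-type identity \eqref{eq:hllc1d}, observes that under the CFL restriction the neighboring fans do not interact so that \eqref{eq:hllc1d} is a genuine convex combination, and then invokes Theorem \ref{theorem2.2} together with the convexity of $\mathcal G$. You supply considerably more detail on the non-interaction step than the paper does (the paper merely asserts it), and you correctly flag that the bookkeeping for $x_1\le x_2$ on a nonuniform mesh is the place where the CFL constant $\tfrac12$ enters; this is indeed the only point that needs care, and your honest remark that the neighboring-cell speeds must be controlled through the global $\min_i$ in \eqref{eq:dt1} is exactly the right caveat.
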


\subsection{High-order accurate scheme}

This section will develop the one-dimensional high-order accurate PCP Lagrangian finite volume scheme with the HLLC solver. It consists of three parts: the high-order accurate initial reconstruction,
the scaling PCP limiter, and the high-order accurate
time discretization.

For the known cell-average values $\{\overline{\vec{U}}_{i}^{n}\}$ of the solutions of the RHD equations \eqref{eq1} or \eqref{eq4}  with $d=1$,
by the aid of  the local characteristic decomposition \cite{zhao},
the  WENO reconstruction technique is applied to get the high-order approximations of the point values $\vec{U}_{i-\frac{1}{2}}^{+}$ and
$\vec{U}_{i+\frac{1}{2}}^{-}$,
and then they can be used to give  the HLLC flux $\widetilde{\vec {F}}^{\ast,\pm}$
and   intermediate states $\vec{U}^{\ast,\pm}$.
For a $(K+1)$th-order accurate finite volume WENO scheme, as soon as the point values
$\vec{U}_{i-\frac{1}{2}}^{+}$ and $\vec{U}_{i+\frac{1}{2}}^{-}$ are obtained by the WENO reconstruction at $t=t_n$,
one can also give a polynomial vector $\vec{U}_i^n(x)$ of degree $K$ in principle such that $\vec{U}_i^n(x_{i-\frac{1}{2}})=\vec{U}_{i-\frac{1}{2}}^{+},
\vec{U}_i^n(x_{i+\frac{1}{2}})=\vec{U}_{i+\frac{1}{2}}^{-}$,   its cell average value over $I_i^n$ is equal to $\overline{\vec{U}}_i^n$, and $\vec{U}^n_i(x)$ is a $(K+1)$th-order   accurate approximation
to $\vec U(x,t_n)$ in $I_i^n$. Such a polynomial vector can  be gotten by using the Hermite type reconstruction technique \cite{zhang2} and satisfy exactly
the $L$-point  Gauss-Lobatto quadrature rule with $2L-3\ge K$, i.e.
\begin{equation}\label{add100}
\overline{\vec{U}}_i^n
=\frac{1}{\Delta x_i^n}\int_{x_{i-\frac{1}{2}}^n}^{x_{i+\frac{1}{2}}^n}\vec{U}_i^n(x)dx
=\sum\limits_{\alpha=1}^{L}\omega_\alpha\vec{U}_{i}^n(x_i^{\alpha}),
\end{equation}
which gives a split of $\overline{\vec{U}}_i^n$, where $\omega_{1},\cdots, \omega_{L}$ are the quadrature weights
satisfying $\omega_1=\omega_L>0$. Practically, it does not need to explicitly obtain such a polynomial vector since
the mean value theorem tell us that there exists some $x^{\ast}\in I_i^n$
such that $\vec{U}_i^n(x^{\ast})=\frac{1}{1-2\omega_1}\sum\limits_{\alpha=2}^{L-1}\omega_\alpha\vec{U}_{i}^n(x_i^{\alpha})=:\vec{U}^{\ast\ast}_i$
\cite{zhang1}. At this time, the split \eqref{add100} can be simply replaced with
\begin{equation}\label{eq2.25}
\overline{\vec{U}}_i^n
=\omega_1\vec{U}_{i-\frac{1}{2}}^{+}+\omega_1\vec{U}_{i+\frac{1}{2}}^{-}+
(1-2\omega_1)\vec{U}_i^{\ast\ast},
\end{equation}
and $\vec{U}^{\ast\ast}_i$ can be calculated by
\begin{equation}
\vec{U}^{\ast\ast}_i=\frac{\overline{\vec{U}}_i^n-\omega_1\vec{U}_{i-\frac{1}{2}}^{+}-\omega_1\vec{U}_{i+\frac{1}{2}}^{-}}{1-2\omega_1}.
\end{equation}

By adding and subtracting the term $\Delta t^n\widehat{\vec{\mathcal{F}}}(\vec{U}_{i-\frac{1}{2}}^{+},\vec{U}_{i+\frac{1}{2}}^{-})$ and using
the split \eqref{eq2.25},
the  scheme \eqref{eq8} with high-order WENO reconstruction can be reformulated as follows
\begin{equation}
\begin{aligned}
\overline{\vec{U}}_i^{n+1}\Delta x_i^{n+1}
&=\big(\omega_1\vec{U}_{i-\frac{1}{2}}^{+}+\omega_1\vec{U}_{i+\frac{1}{2}}^{-}+(1-2\omega_1)\vec{U}_i^{\ast\ast}\big)\Delta x_i^n\\
&~~~-\Delta t^n\left(\widehat{\vec{\mathcal{F}}}(\vec{U}_{i+\frac{1}{2}}^{-},\vec{U}_{i+\frac{1}{2}}^{+})
-\widehat{\vec{\mathcal{F}}}(\vec{U}_{i-\frac{1}{2}}^{+},\vec{U}_{i+\frac{1}{2}}^{-})
+\widehat{\vec{\mathcal{F}}}(\vec{U}_{i-\frac{1}{2}}^{+},\vec{U}_{i+\frac{1}{2}}^{-})
-\widehat{\vec{\mathcal{F}}}(\vec{U}_{i-\frac{1}{2}}^{-},\vec{U}_{i-\frac{1}{2}}^{+})\right)\\
&=(1-2\omega_1)\vec{U}_i^{\ast\ast}\Delta x_i^n
+\omega_1\left\{\vec{U}_{i-\frac{1}{2}}^{+}\Delta x_i^n-\frac{\Delta t^n}{\omega_1}
\left(\widehat{\vec{\mathcal{F}}}(\vec{U}_{i-\frac{1}{2}}^{+},\vec{U}_{i+\frac{1}{2}}^{-})
-\widehat{\vec{\mathcal{F}}}(\vec{U}_{i-\frac{1}{2}}^{-},\vec{U}_{i-\frac{1}{2}}^{+})\right)\right\}\\
&~~~+\omega_1\left\{\vec{U}_{i+\frac{1}{2}}^{-}\Delta x_i^n-\frac{\Delta t^n}{\omega_1}
\left(\widehat{\vec{\mathcal{F}}}(\vec{U}_{i+\frac{1}{2}}^{-},\vec{U}_{i+\frac{1}{2}}^{+})
-\widehat{\vec{\mathcal{F}}}(\vec{U}_{i-\frac{1}{2}}^{+},\vec{U}_{i+\frac{1}{2}}^{-})\right)\right\}\\
&=(1-2\omega_1)\vec{U}_i^{\ast\ast}\Delta x_i^n+\omega_1\mathcal{H}_1+\omega_1\mathcal{H}_L,
\end{aligned}
\end{equation}
where
\begin{equation}
\begin{aligned}
\mathcal{H}_1&=\vec{U}_{i-\frac{1}{2}}^{+}\Delta x_i^n-\frac{\Delta t^n}{\omega_1}
\left(\widehat{\vec{\mathcal{F}}}(\vec{U}_{i-\frac{1}{2}}^{+},\vec{U}_{i+\frac{1}{2}}^{-})
-\widehat{\vec{\mathcal{F}}}(\vec{U}_{i-\frac{1}{2}}^{-},\vec{U}_{i-\frac{1}{2}}^{+})\right),\\
\mathcal{H}_L&=\vec{U}_{i+\frac{1}{2}}^{-}\Delta x_i^n-\frac{\Delta t^n}{\omega_1}
\left(\widehat{\vec{\mathcal{F}}}(\vec{U}_{i+\frac{1}{2}}^{-},\vec{U}_{i+\frac{1}{2}}^{+})
-\widehat{\vec{\mathcal{F}}}(\vec{U}_{i-\frac{1}{2}}^{+},\vec{U}_{i+\frac{1}{2}}^{-})\right).
\end{aligned}
\end{equation}
Because $\mathcal{H}_1$ and $\mathcal{H}_L$ do exactly mimic the first-order scheme
 \eqref{eq8} with (\ref{eq9}) and the HLLC solver,
 one can know $\mathcal{H}_1,\mathcal{H}_L\in \mathcal G$
 if the two boundary values $\vec{U}_{i-\frac{1}{2}}^{+},\vec{U}_{i+\frac{1}{2}}^{-}\in \mathcal G$,  the wave speeds $s^\pm$ estimated by \eqref{ws}, and the time stepsize  $\Delta t^n$ satisfies the  restriction
\begin{equation}\label{eq56}
\Delta t^n\le\lambda~\omega_1\min\limits_{i}
\left\{\Delta x_i^n/\max\limits(|s_{\min}(\vec{U}_{i\pm\frac{1}{2}}^{\mp})|,|s_{\max}(\vec{U}_{i\pm\frac{1}{2}}^{\mp})|)\right\},
\end{equation}
with $\lambda\le\frac{1}{2}$.
Therefore, besides the time step restriction  \eqref{eq56},
the  sufficient condition for
$\overline{\vec{U}}_i^{n+1}\in  \mathcal G$ is
\begin{equation}\label{eq57}
\vec{U}_{i-\frac{1}{2}}^{+},~\vec{U}_{i+\frac{1}{2}}^{-},~\vec{U}_{i}^{\ast\ast}\in \mathcal G, \quad \forall i=1,\dots, N,
\end{equation}
which can be ensured by using a scaling limiter, presented in the next
subsection. Hence  the aforementioned results can be summarized as follows.

\begin{theorem}
If $\vec{U}_{i-\frac{1}{2}}^{+},\vec{U}_{i+\frac{1}{2}}^{-},\vec{U}_{i}^{\ast\ast}\in \mathcal G$
and the wave speeds $s^\pm$ estimated by \eqref{ws},
then $\overline{\vec{U}}_i^{n+1}$ obtained by the high-order
Lagrangian scheme \eqref{eq8} with
the WENO reconstruction and the HLLC solver belongs to the admissible
state set $\mathcal G$ under the time stepsize restriction \eqref{eq56} with $\lambda\le\frac{1}{2}$.
\end{theorem}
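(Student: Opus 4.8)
The plan is to deduce the claim from the convex-combination reformulation of the high-order update that was assembled immediately above the statement. First I would record the identity
\[
\overline{\vec{U}}_i^{n+1}\Delta x_i^{n+1}
=(1-2\omega_1)\,\vec{U}_i^{\ast\ast}\,\Delta x_i^{n}+\omega_1\,\mathcal{H}_1+\omega_1\,\mathcal{H}_L ,
\]
which follows by substituting the Gauss--Lobatto split \eqref{eq2.25} of $\overline{\vec{U}}_i^n$ into the scheme \eqref{eq8} and adding and subtracting the term $\Delta t^n\widehat{\vec{\mathcal F}}(\vec{U}_{i-\frac{1}{2}}^{+},\vec{U}_{i+\frac{1}{2}}^{-})$. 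Since $\omega_1=\omega_L>0$ and the Gauss--Lobatto weights sum to one, $1-2\omega_1\ge 0$, so all three coefficients on the right-hand side are nonnegative.

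Next I would verify that $\mathcal{H}_1$, $\mathcal{H}_L$ and $\vec{U}_i^{\ast\ast}$ all belong to $\mathcal G$. For $\vec{U}_i^{\ast\ast}$ this is the hypothesis. For $\mathcal{H}_1$ (resp.\ $\mathcal{H}_L$), the point is that this expression is, term by term, the right-hand side of the \emph{first-order} HLLC Lagrangian scheme \eqref{eq8}--\eqref{eq9}: it is the first-order update built from the data $(\vec{U}_{i-\frac{1}{2}}^{-},\vec{U}_{i-\frac{1}{2}}^{+},\vec{U}_{i+\frac{1}{2}}^{-})$ around the left endpoint of $I_i^n$ (resp.\ from $(\vec{U}_{i-\frac{1}{2}}^{+},\vec{U}_{i+\frac{1}{2}}^{-},\vec{U}_{i+\frac{1}{2}}^{+})$ around the right endpoint), with the only change that the step $\Delta t^n$ is replaced by $\Delta t^n/\omega_1$. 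Hence I would invoke the algebraic equivalence \eqref{eq21}--\eqref{eq:hllc1d} between such an HLLC update and the exact integral of the approximate Riemann solution: it exhibits $\mathcal{H}_1$ (resp.\ $\mathcal{H}_L$) as a nonnegative combination of the involved interface states $\vec{U}_{i\pm\frac{1}{2}}^{\mp}$ together with the intermediate states $\vec{U}^{\ast,\pm}$ of the two local HLLC Riemann problems, the combination being legitimate (nonnegative ``lengths'') precisely because the interface states lie in $\mathcal G$, the wave speeds $s^\pm$ are taken as in \eqref{ws}, and the rescaled CFL condition $\Delta t^n/\omega_1\le\frac{1}{2}\,\Delta x_i^n/\max(|s_{\min}|,|s_{\max}|)$ encoded in \eqref{eq56} prevents the waves of neighbouring Riemann problems from interacting during the step. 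Since Theorem~\ref{theorem2.2} guarantees $\vec{U}^{\ast,\pm}\in\mathcal G$, and since $\mathcal G$ is a convex cone (convexity of $\mathcal G$ together with the scaling invariance $\vec{U}\in\mathcal G\Rightarrow c\vec{U}\in\mathcal G$ for $c>0$, a consequence of the homogeneity of the pressure equation \eqref{pre}), every such nonnegative combination stays in $\mathcal G$; hence $\mathcal{H}_1,\mathcal{H}_L\in\mathcal G$. This is the only step that needs care — making precise that $\mathcal{H}_1,\mathcal{H}_L$ are genuine first-order HLLC Lagrangian updates with the step $\Delta t^n/\omega_1$, so that Theorems~\ref{theorem2.2}--\ref{theorem2.3} and the effective-CFL bookkeeping transfer verbatim and reproduce exactly \eqref{eq56}; everything else is routine.

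Finally I would combine the two observations: the displayed identity writes $\overline{\vec{U}}_i^{n+1}\Delta x_i^{n+1}$ as a nonnegative combination of $\vec{U}_i^{\ast\ast}\in\mathcal G$, $\mathcal{H}_1\in\mathcal G$ and $\mathcal{H}_L\in\mathcal G$, so it lies in the convex cone $\mathcal G$; dividing by $\Delta x_i^{n+1}$ (which is positive since the nodes move with the bounded contact speeds under the CFL restriction, and harmless again by the scaling invariance of $\mathcal G$) yields $\overline{\vec{U}}_i^{n+1}\in\mathcal G$, which is the assertion.
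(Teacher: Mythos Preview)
Your proposal is correct and follows essentially the same route as the paper: the theorem is stated as a summary of the convex-combination reformulation derived just above it, and your argument reproduces that derivation. You are in fact slightly more careful than the paper in making explicit that $\mathcal G$ is a convex \emph{cone} (so that positive, not merely convex, combinations stay in $\mathcal G$ and the final division by $\Delta x_i^{n+1}>0$ is harmless), a point the paper leaves implicit when it writes ``$\mathcal{H}_1,\mathcal{H}_L\in\mathcal G$''.
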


\subsubsection{Scaling PCP limiter}\label{subsection2.2.1}

This section uses  the scaling PCP limiter, which has been used in \cite{cheng,zhang,wu2017},
to limit $\vec{U}_{i-\frac{1}{2}}^{+},\vec{U}_{i+\frac{1}{2}}^{-},\vec{U}_{i}^{\ast\ast}$
such that the limited values $\widetilde{\vec{U}}_{i-\frac{1}{2}}^{+},\widetilde{\vec{U}}_{i+\frac{1}{2}}^{-},
\widetilde{\vec{U}}_i^{\ast\ast}$ belong to $\mathcal G$ when  $\overline{\vec{U}}_i^{n}\in \mathcal{G}$.
For the sake of brevity, the superscript $n$ will be omitted in this section and a small parameter
 $\varepsilon$ is taken as $10^{-13}$.
Such limiting procedure can be implemented as follows.

First, let us enforce the positivity of the mass density.
For each cell $I_i$, define the limiter
\begin{equation}
\theta_i^1=\min\left\{1,\frac{\overline{D}_i-\varepsilon}
{\overline{D}_i-D_{\min}}\right\},~~D_{\min}=\min(D_{i-\frac{1}{2}}^{+},D_i^{\ast\ast},D_{i+\frac{1}{2}}^{-}),
\end{equation}
and  then limit the  mass density as follows
\begin{equation}\label{eq60}
\widehat{D}_{i\pm\frac{1}{2}}^{\mp}=\overline{D}_i+\theta_{i}^{1}(D_{i\pm\frac{1}{2}}^{\mp}-\overline{D}_i),
~~\widehat{D}_{i}^{\ast\ast}=\overline{D}_i+\theta_{i}^{1}(D_{i}^{\ast\ast}-\overline{D}_i).
\end{equation}
Define
$\widehat{\vec{U}}_{i\pm\frac{1}{2}}^{\mp}=(\widehat{D}_{i\pm\frac{1}{2}}^{\mp},
m_{i\pm\frac{1}{2}}^{\mp},E_{i\pm\frac{1}{2}}^{\mp})^{\mathrm{T}}$
and
$\widehat{\vec{U}}_i^{\ast\ast}=(\widehat{D}_i^{\ast\ast},
m_i^{\ast\ast},E_i^{\ast\ast})^{\mathrm{T}}$.

Then, enforce the positivity of  $q(\vec{U})=E-\sqrt{D^2+m^2}$.  For each cell $I_i$, define the limiter
\begin{equation}\label{eq61}
\theta_i^2=\min\left\{1,\frac{q(\overline{\vec{U}}_i)
-\varepsilon}{q(\overline{\vec{U}}_i)-q_{\min}}\right\},\quad
q_{\min}=\min\left(q(\widehat{\vec{U}}_{i-\frac{1}{2}}^{+}),q(\widehat{\vec{U}}_i^{\ast\ast}),
q(\widehat{\vec{U}}_{i+\frac{1}{2}}^{-})\right),
\end{equation}
and then limit the conservative vectors as follows
\begin{equation}\label{eq62}
\widetilde{\vec{U}}_{i\pm\frac{1}{2}}^{\mp}=\overline{\vec{U}}_i+\theta_i^2(\widehat{\vec{U}}_{i\pm\frac{1}{2}}^{\mp}-\overline{\vec{U}}_i),
~~\widetilde{\vec{U}}_{i}^{\ast\ast}=\overline{\vec{U}}_i+\theta_i^2(\widehat{\vec{U}}_{i}^{\ast\ast}-\overline{\vec{U}}_i).
\end{equation}

It is easy to check that
$\widetilde{\vec{U}}_{i-\frac{1}{2}}^{+},\widetilde{\vec{U}}_{i+\frac{1}{2}}^{-},
\widetilde{\vec{U}}_i^{\ast\ast}\in \mathcal G$.
Moreover, the above scaling PCP limiter does not destroy the original high order accuracy
in smooth regions, see  the detailed discussion in \cite{zhang}.

\subsubsection{High-order time discretization}\label{subsubsection2.2.2}

To get a globally high-order accurate scheme in time and space,
we can further employ the strong stability preserving (SSP) Runge-Kutta (RK) method to replace the explicit Euler time discretization in  \eqref{eq8} and \eqref{eq8-2}.
Similar to  \cite{cheng},
for instance, to obtain a third-order accurate scheme in time,
a third-stage SSP, explicit RK method may be used
for the time discretization as follows.

Stage 1:
\begin{equation}\label{eq63}
\begin{aligned}
&x_{i+\frac{1}{2}}^{(1)}=x_{i+\frac{1}{2}}^n+\Delta t^nu_{i+\frac{1}{2}}^{\ast},
~~~\Delta x_{i}^{(1)}=x_{i+\frac{1}{2}}^{(1)}-x_{i-\frac{1}{2}}^{(1)},\\
&\overline{\vec{U}}_i^{(1)}\Delta x_{i}^{(1)}
=\overline{\vec{U}}_i^{n}\Delta x_{i}^{n}-\Delta t^n\mathcal{L}(\overline{\vec{U}}^n;i);\\
\end{aligned}
\end{equation}
\par
Stage 2:
\begin{equation}\label{eq64}
\begin{aligned}
&x_{i+\frac{1}{2}}^{(2)}=\frac{3}{4}x_{i+\frac{1}{2}}^n+
\frac{1}{4}\big(x_{i+\frac{1}{2}}^{(1)}+\Delta t^nu_{i+\frac{1}{2}}^{(1),\ast}\big),
~~~\Delta x_i^{(2)}=x_{i+\frac{1}{2}}^{(2)}-x_{i-\frac{1}{2}}^{(2)},\\
&\overline{\vec{U}}_i^{(2)}\Delta x_{i}^{(2)}=\frac{3}{4}\overline{\vec{U}}_i^{n}\Delta x_{i}^{n}
+\frac{1}{4}\left(\overline{\vec{U}}_i^{(1)}\Delta x_{i}^{(1)}-\Delta t^n\mathcal{L}(\overline{\vec{U}}^{(1)};i)\right);
\end{aligned}
\end{equation}
\par
Stage 3:
\begin{equation}\label{eq65}
\begin{aligned}
&x_{i+\frac{1}{2}}^{n+1}=\frac{1}{3}x_{i+\frac{1}{2}}^n
+\frac{2}{3}\big(x_{i+\frac{1}{2}}^{(2)}+\Delta t^nu_{i+\frac{1}{2}}^{(2),\ast}\big),
~~~\Delta x_i^{n+1}=x _{i+\frac{1}{2}}^{n+1}-x_{i-\frac{1}{2}}^{n+1},\\
&\overline{\vec{U}}_i^{n+1}\Delta x_{i}^{n+1}=\frac{1}{3}\overline{\vec{U}}_i^{n}\Delta x_{i}^{n}
+\frac{2}{3}\left(\overline{\vec{U}}_i^{(2)}\Delta x_{i}^{(2)}-\Delta t^n\mathcal{L}(\overline{\vec{U}}^{(2)};i)\right),
\end{aligned}
\end{equation}
where $\mathcal{L}(\overline{\vec{U}};i)=\widehat{\vec{\mathcal{F}}}(\vec{U}_{i+\frac{1}{2}}^{-},\vec{U}_{i+\frac{1}{2}}^{+})-
\widehat{\vec{\mathcal{F}}}(\vec{U}_{i-\frac{1}{2}}^{-},\vec{U}_{i-\frac{1}{2}}^{+})$.

The  scaling PCP limiter described in Section \ref{subsection2.2.1}
needs to be performed at each stage of the above RK method
to limit the value of ${\vec{U}}_{i-\frac{1}{2}}^{+}, {\vec{U}}_{i+\frac{1}{2}}^{-},
{\vec{U}}_i^{\ast\ast}$. Because each stage of the above SSP RK method is a convex
combination of the forward Euler time discretization and $\mathcal G$ is convex,
so is the above SSP RK method when the forward Euler method is conservative, stable and PCP.

\section{Two-dimensional PCP Lagrangian schemes}\label{section:2D-PCP}
This section considers the Lagrangian finite volume  schemes for the special relativistic hydrodynamics (RHD) equations \eqref{eq1} or \eqref{eq4}  with $d=2$. Here
the notations $(m_1,m_2)$, $(u_1,u_2)$ and $(x_1,x_2)$ are replaced with $\vec m=(m_x,m_y)$, $\vec u=(u_x,u_y)$ and $\vec x=(x,y)$, respectively.

Assume that the time interval $\{t>0\}$ is divided into:
$t_{n+1}=t_n+\Delta t^n$, $n=0,1,2,\cdots$,
 where the time step size $\Delta t^n$  will be determined
 by the CFL type condition.
The (dynamic) computational domain $\Omega$ at $t=t_n$  is partitioned into $N_x\times N_y$  quadrilateral cells: $I_{ij}^n$  with the boundary $\partial I_{ij}^n$ and four vertices
$\{(x_{i\pm\frac{1}{2}}^n,y_{j\pm\frac{1}{2}}^n),(x_{i\pm\frac{1}{2}}^n,y_{j\mp\frac{1}{2}}^n)\}$, $i=1,\cdots, N_x,j=1,\cdots, N_y$; and then
the conservative Lagrangian finite volume
scheme with the forward Euler time discretization
for the governing equations \eqref{eq1} with
$d=2$ can be given as follows
\begin{align}\nonumber
\overline{\vec{U}}_{ij}^{n+1}A_{ij}^{n+1}&=\overline{\vec{U}}_{ij}^{n}A_{ij}^{n}
-\Delta t^n\int_{\partial I_{ij}^n}\widehat{\vec{\mathcal{F}}}_{\vec{n}_{ij}}(\vec{U}^{-}_{ij},\vec{U}^{+}_{ij})ds\\
&=\overline{\vec{U}}_{ij}^{n}A_{ij}^{n}-\Delta t^n\sum\limits_{m=1}^4\int_{\partial I_{ij}^{n,m}}
\widehat{\vec{\mathcal{F}}}_{\vec{n}_{ij}^m}(\vec{U}_{m,ij}^{-},\vec{U}_{m,ij}^{+})ds,
 \label{eq72}\end{align}
where $\overline{\vec{U}}^n_{ij}=(\overline{D}_{ij}^n,\overline{\vec{m}}_{ij}^n,\overline{E}_{ij}^n)^{T}$
is the cell average approximation of $\vec U$ at $t_n$ over the cell $I^n_{ij}$,
$A^n_{ij}=\int_{I^n_{ij}}dxdy$ denotes  the area of  $I^n_{ij}$,
$\vec{U}^{-}_{ij}=(D^{-}_{ij},\vec{m}^{-}_{ij},E^{-}_{ij})^T$
and $\vec{U}^{+}_{ij}=(D^{+}_{ij},\vec{m}^{+}_{ij},E^{+}_{ij})^T$
are the reconstructed limits of $\vec U$
on the boundary $\partial I_{ij}^n$ from the inside and outside of the cell $I^n_{ij}$,  respectively; $\partial I^{n,m}_{ij}$ is the $m$th cell edge of $\partial I_{ij}^n$,
and $|l_{ij}^m|$ and $\vec{n}^m_{ij}$ are its  length and unit outward normal vector from the inside  to the outside of $I^{n}_{ij}$, respectively.
Here $\widehat{\vec{\mathcal{F}}}$ denotes the numerical flux, satisfying
the consistency condition $\widehat{\vec{\mathcal{F}}}_{\vec{n}}(\vec{U},\vec{U})=
(0,p\vec{n}^T,p\vec{u}\cdot\vec{n}^T)^T$ and the conservation property
$\widehat{\vec{\mathcal{F}}}_{\vec{n}}(\vec{U}^{-},\vec{U}^{+})=
-\widehat{\vec{\mathcal{F}}}_{-\vec{n}}(\vec{U}^{+},\vec{U}^{-})$.
 The  flux $\widehat{\vec{\mathcal{F}}}_{\vec{n}_{ij}^m}$ will be
obtained by solving the local 1D Riemann problem along the  vector normal to the edge $\partial I^{n,m}_{ij}$ with the HLLC Riemann solver.
Different from the one-dimensional numerical flux $\widehat{\vec{\mathcal F}}(\vec U^-,\vec U^+)$ presented in Section \ref{section:1D-PCP}, the flux $\widehat{\vec{\mathcal{F}}}_{\vec{n}_{ij}^m}(\vec U^-,\vec U^+)$ has to be obtained by the HLLC Riemann solver of the split 2D system for \eqref{eq1} with $d=2$
 in a moving coordinate system.
 The readers are referred to \cite{yz2} for the emphasis on the differences between
 the system \eqref{eq1} with $d=1$ and the split 2D system for \eqref{eq1} with $d=2$.
In view of that the  current HLLC Riemann solver is essentially the same as that in Section \ref{section:1D-PCP} except for the special attention to the nonlinear coupling from the tangential velocity, some details of the  current HLLC Riemann solver will be omitted here.

Let $\vec{U}^{\mp}=(D^{\mp},\vec{m}^{\mp},E^{\mp})^T$
be  the left and right states of the local Riemann problem of the
split 2D system for \eqref{eq1} with $d=2$ in the direction $\vec n$ as shown in Figure \ref{cell-2d}.
At present,
the wave structure of the exact or HLLC approximate solution of the local Riemann problem
is the same as  that in Figure \ref{fig-hllc}.
Two constant intermediate states between the left and right  acoustic waves are
denoted by  $\vec{U}^{\ast,\pm}=(D^{\ast,\pm},\vec{m}^{\ast,\pm},E^{\ast,\pm})^T$,
respectively,
and the fluxes
are denoted by $\widetilde{\vec F}^{\ast,\pm}_{\vec{n}}$,
then for the left or right wave, the Rankine-Hugoniot conditions
$\widetilde{\vec {F}}^{\ast,\pm}_{\vec{n}}=\widetilde{\vec {F}}_{\vec{n}}(\vec U^{\pm})-(s^{\pm}-s^{\ast})(\vec{U}^{\pm}-\vec{U}^{\ast,\pm})$
can be similarly  given in the component form
\begin{align}
\begin{aligned}
&D^{\ast,\pm}(s^{\pm}-s^{\ast})=D^{\pm}(s^{\pm}-u_n^{\pm}),\\
&m_n^{\ast,\pm}(s^{\pm}-s^{\ast})=m_n^{\pm}(s^{\pm}-u_n^{\pm})+p^{\ast}-p^{\pm},\\
&m_{\tau}^{\ast,\pm}(s^{\pm}-s^{\ast})=m_{\tau}^{\pm}(s^{\pm}-u_n^{\pm}),\\
&E^{\ast,\pm}(s^{\pm}-s^{\ast})=E^{\pm}(s^{\pm}-u_n^{\pm})+p^{\ast}s^{\ast}-p^{\pm}u_n^{\pm},
\end{aligned}\label{eq28}\end{align}
where $m_n^{\pm}$ and $m_{\tau}^{\mp}$ are the normal and tangential
components of $\vec{m}^{\mp}$, i.e. $m_n=\vec{m}^T\cdot\vec{n}$ and $m_{\tau}=\vec{m}^T\cdot\boldsymbol{\tau}$,
$u_n=m_n/(E+p)$ is the normal component of the velocity vector,
and the wave speeds $s^{\pm}$ are estimated as follows
\begin{equation}\label{eq30}
s^{-}=\min\left(s_{\min}(\vec{U}^{-}),s_{\min}(\vec{U}^{+})\right),
~~s^{+}=\max\left(s_{\max}(\vec{U}^{-}),s_{\max}(\vec{U}^{+})\right),
\end{equation}
with
\begin{equation}\label{ws2d}
s_{\min}(\vec{U})=\frac{u_n-\sqrt{\sigma_s(1-u_n^2+\sigma_s)}}{1+\sigma_s},~~~
s_{\max}(\vec{U})=\frac{u_n+\sqrt{\sigma_s(1-u_n^2+\sigma_s)}}{1+\sigma_s},
\end{equation}
and $\sigma_s=c_s^2/[\gamma^2(1-c_s^2)]$.
\begin{figure}[h!]
\centering
\includegraphics[width=2.8in]{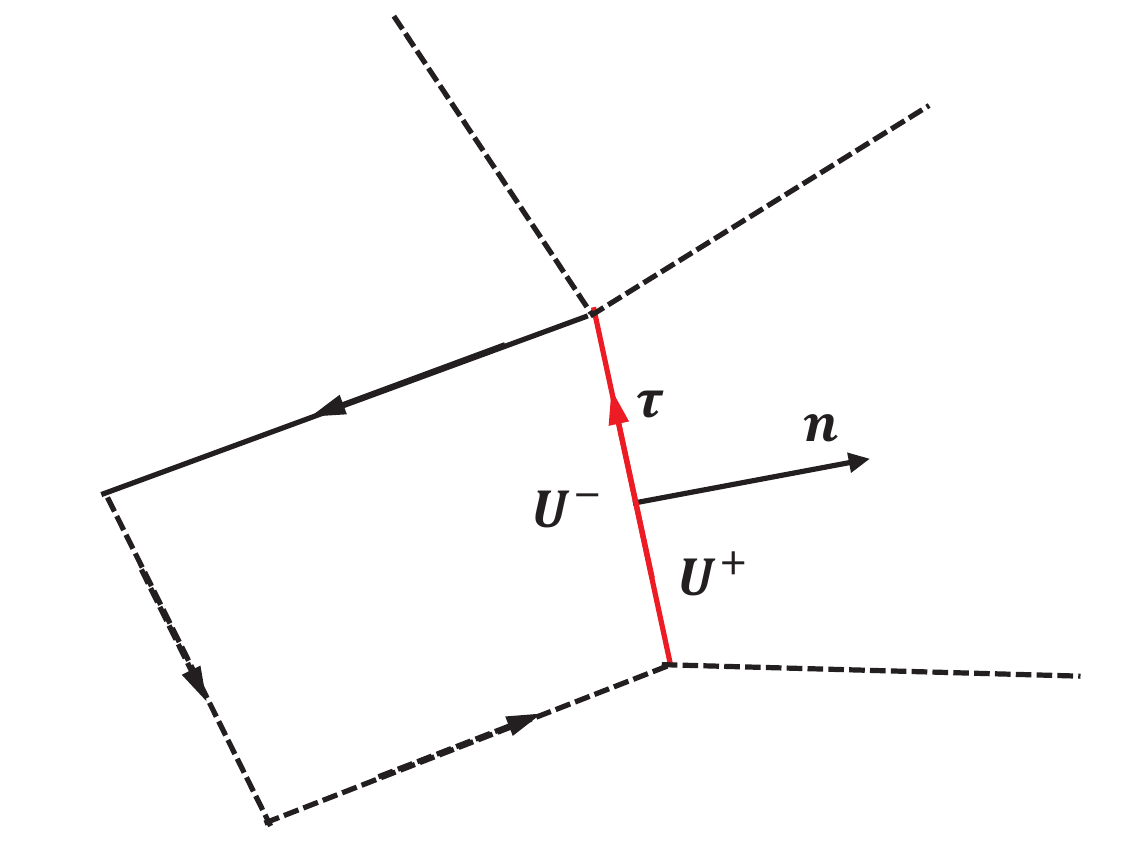}
\caption{A general quadrilateral cell. }
\label{cell-2d}
\end{figure}
Combining the last equation of \eqref{eq28} with the second one gives
the  expression of $s^{\ast}$ in terms
of $p^{\ast}$ as follows
\begin{equation}\label{eq29}
(A^{-}+s^{-}p^{\ast})s^{\ast}=B^{-}+p^{\ast},~~(A^{+}+s^{+}p^{\ast})s^{\ast}=B^{+}+p^{\ast},
\end{equation}
where
\begin{equation}\label{eq91}
A^{\pm}=s^{\pm}E^{\pm}-m_n^{\pm},~~~B^{\pm}=m_n^{\pm}(s^{\pm}-u_n^{\pm})-p^{\pm}.
\end{equation}
Similar to \eqref{eq17},  solving the system \eqref{eq29} gives
\begin{align}\label{eq-contact-speed}
s^{\ast}=&\frac{-\mathcal{C}_1-\sqrt{\mathcal{C}_1^2-4\mathcal{C}_0\mathcal{C}_2}}{2\mathcal{C}_2},\\
p^{\ast}=&\frac{s^{\ast}A^{-}-B^{-}}{1-s^{-}s^{\ast}}=\frac{s^{\ast}A^{+}-B^{+}}{1-s^{+}s^{\ast}},
\end{align}
where
\begin{equation}
\mathcal{C}_0=B^{+}-B^{-},~~\mathcal{C}_1=A^{-}+s^{+}B^{-}-A^{+}-s^{-}B^{+},
~~\mathcal{C}_2=s^{-}A^{+}-s^{+}A^{-}.
\end{equation}
Based on those,  the HLLC intermediate states $\vec{U}^{\ast,\pm}$
 can be gotten from \eqref{eq28}  and then
 the HLLC fluxes $\widetilde{\vec F}^{\ast,\pm}_{\vec{n}}$
 (approximating $\widetilde{\vec{F}}_{\vec{n}}(\vec{U})$) and the numerical flux in the Lagrangian framework
$\widehat{\vec{\mathcal{F}}}_{\vec{n}}=\widetilde{\vec {F}}^{\ast,+}_{\vec{n}}=\widetilde{\vec {F}}^{\ast,-}_{\vec{n}}$.

Similar to Lemma \ref{lem3},  the following conclusion can be drawn.
\begin{lemma}\label{lem4} For $\vec{U}^{\pm}=(D^{\pm},\vec{m}^{\pm},E^{\pm})^{T}$ and the wave speeds $s^{\pm}$    estimated in \eqref{eq30}, if  $s^{\ast}$ is the speed of the contact discontinuity and $A^{\pm},B^{\pm}$ are defined by \eqref{eq91}, then one has
\begin{enumerate}[(\romannumeral1)]
\item $A^{-}<0,~~A^{+}>0$;
\item $A^{-}-B^{-}<0,~~A^{+}+B_{+}>0$;
\item $s^{-}A^{-}-B^{-}>0,~~s^{+}A^{+}-B^{+}>0$;
\item $s^{+}A^{-}-B^{-}<0,~~s^{-}A^{+}-B^{+}<0$;
\item $s^{-}<u_n^{\pm}<s^{+},~~s^{-}<s^{\ast}<s^{+}$;
\item $4(A^{\pm})^2-(s^{\pm}A^{\pm}+B^{\pm})^2>0$;
\item $(A^{\pm})^2-(B^{\pm})^2-(D^{\pm})^2(s^{\pm}-u_n^{\pm})^2-(m_{\tau}^{\pm})^2(s^{\pm}-u_n^{\pm})^2>0$.
\end{enumerate}
\end{lemma}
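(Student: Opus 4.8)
The plan is to mirror the proof of Lemma~\ref{lem3}, translating every estimate into the physical variables via $m_n^{\pm}=u_n^{\pm}(E^{\pm}+p^{\pm})$, the admissibility of $\vec{U}^{\pm}$ (so $D^{\pm},E^{\pm}>0$ and $(E^{\pm})^{2}>(D^{\pm})^{2}+|\vec{m}^{\pm}|^{2}$), and the EOS, which through \eqref{cs} supplies both $0<c_s^{2}<\Gamma-1\le1$ and the closed form $E+p=\Gamma p\gamma^{2}/c_s^{2}$ for each state; one also rewrites $A^{\pm}=(s^{\pm}-u_n^{\pm})E^{\pm}-u_n^{\pm}p^{\pm}$. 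The first building block is the velocity–bracketing half of (v): directly from \eqref{ws2d}, since $u_n^{2}\le|\vec{u}|^{2}<1$ and $\sigma_s>0$, one gets $s_{\min}(\vec{U})<u_n<s_{\max}(\vec{U})$ and $-1<s_{\min}(\vec{U})\le s_{\max}(\vec{U})<1$ for every admissible $\vec{U}$; together with \eqref{eq30} and $E^{\pm}>0$ this gives $-1<s^{-}\le s^{+}<1$ and $s^{-}<u_n^{\pm}<s^{+}$.

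Next, each of (i)--(iv) is reduced to a one–state sign inequality. Because $E^{\pm}>0$, in (i)--(ii) one may replace $s^{-}$ by $s_{\min}(\vec{U}^{-})$ and $s^{+}$ by $s_{\max}(\vec{U}^{+})$; e.g.\ $A^{-}<0$ becomes $s_{\min}(\vec{U})E-m_n<0$, and after clearing $1+\sigma_s>0$ the numerator equals $-u_n\bigl(p+(E+p)\sigma_s\bigr)-E\sqrt{\sigma_s(1-u_n^{2}+\sigma_s)}$, which is visibly negative for $u_n\ge0$ and, for $u_n<0$, reduces after squaring (using $E^{2}-m_n^{2}>0$) to a polynomial inequality that closes on substituting $E+p=\Gamma p\gamma^{2}/c_s^{2}$ and invoking $c_s^{2}<\Gamma-1$ and $\Gamma\le2$. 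Item (iii), $s^{\pm}A^{\pm}-B^{\pm}>0$, is cleanest: $s^{-}A^{-}-B^{-}$ is a quadratic in $s^{-}$ with leading coefficient $E^{-}>0$ whose discriminant has the sign of $-\bigl(\rho h-p(1-u_\tau^{2})\bigr)$, negative since $\rho h=\rho+p+p/(\Gamma-1)>p\ge p(1-u_\tau^{2})$, so the quadratic is everywhere positive. For (ii) and (iv) I would use (v) and the identities $A^{-}-B^{-}=(s^{-}-u_n^{-})(E^{-}-m_n^{-})+p^{-}(1-u_n^{-})$ and $s^{+}A^{-}-B^{-}=(s^{-}-u_n^{-})(s^{+}E^{-}-m_n^{-})+p^{-}(1-s^{+}u_n^{-})$: the factors $E^{-}-m_n^{-}$ and $s^{+}E^{-}-m_n^{-}$ are positive (the first from $E^{-}>|m_n^{-}|$, the second being the ``$A^{+}>0$'' estimate applied to $\vec{U}^{-}$), $s^{-}-u_n^{-}<0$, the $p^{-}(\cdots)$ terms are positive, and the negative part is shown to dominate after bounding $s^{-},s^{+}$ by the eigenvalues of $\vec{U}^{-}$ and using the EOS form of $E^{-}$. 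With (iii)--(iv) available, the contact–speed half of (v) follows as in 1D: evaluating the quadratic of \eqref{eq-contact-speed} gives $g(s^{-})=-(1-(s^{-})^{2})(s^{-}A^{+}-B^{+})+(1-s^{-}s^{+})(s^{-}A^{-}-B^{-})>0$ and, symmetrically, $g(s^{+})<0$, so its unique root in $(s^{-},s^{+})$ — checked to be precisely the branch selected in \eqref{eq-contact-speed} by a short case split on the sign of $\mathcal{C}_{2}$ — is $s^{\ast}$.

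For (vi) I would factor $4(A^{\pm})^{2}-(s^{\pm}A^{\pm}+B^{\pm})^{2}=\bigl((2-s^{\pm})A^{\pm}-B^{\pm}\bigr)\bigl((2+s^{\pm})A^{\pm}+B^{\pm}\bigr)$ and read off that the two factors share a sign from (i)--(iv) and $|s^{\pm}|<1$ (for instance $(2-s^{-})A^{-}-B^{-}=(A^{-}-B^{-})+(1-s^{-})A^{-}<0$, while $(2+s^{-})A^{-}+B^{-}<2(1+s^{-})A^{-}<0$ using $B^{-}<s^{-}A^{-}$ from (iii); the $+$ case is analogous). For (vii), a direct computation with $w^{\pm}:=s^{\pm}-u_n^{\pm}$ collapses the left–hand side to $\mathcal{Q}^{\pm}(w^{\pm})^{2}+2u_n^{\pm}(p^{\pm})^{2}w^{\pm}-(p^{\pm})^{2}(1-(u_n^{\pm})^{2})$ with $\mathcal{Q}^{\pm}=(E^{\pm})^{2}-|\vec{m}^{\pm}|^{2}-(D^{\pm})^{2}>0$ (the terms $(D^{\pm}w^{\pm})^{2}$ and $(m_\tau^{\pm}w^{\pm})^{2}$ get absorbed since $m_n^{2}+m_\tau^{2}=|\vec{m}|^{2}$); this quadratic in $w^{\pm}$ opens upward with negative constant term, hence has roots $w_{-}<0<w_{+}$, so by the sign of $w^{\pm}$ from (v) it suffices to show $s_{\min}(\vec{U})-u_n<w_{-}(\vec{U})$ and $s_{\max}(\vec{U})-u_n>w_{+}(\vec{U})$ for every admissible $\vec{U}$ — again a polynomial inequality once $E+p=\Gamma p\gamma^{2}/c_s^{2}$ is substituted, closed by $c_s^{2}<\Gamma-1\le1$ and $\gamma^{2}(1-u_n^{2})=1+\gamma^{2}u_\tau^{2}\ge1$.

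The hard part is not any individual step but the case bookkeeping in the $u_n^{\pm}<0$ (and $u_n^{\pm}\to1$) regimes of (i), (ii), (iv) and in (vii): because \eqref{ws2d} involves $\sigma_s$, hence the Lorentz factor built from the tangential speed as well, every such estimate carries the extra factor $\gamma^{2}(1-u_n^{2})\ge1$ that is absent in the 1D setting of Lemma~\ref{lem3}, and showing that $c_s^{2}<\Gamma-1\le1$ still beats the relevant quadratics in each regime is the genuinely multidimensional content; these computations are elementary but lengthy and are best relegated to an appendix.
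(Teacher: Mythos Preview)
Your plan is sound and closely parallels the paper's: it too handles (i)--(v) by transporting the 1D arguments of Lemma~\ref{lem3} with the tangential-velocity correction, and only works out (vi)--(vii) in detail (Appendix~\ref{appendix-B}). The one place you take a genuinely different route is (vi). The paper expands $f_1^{\pm}=(2+s^{\pm})A^{\pm}+B^{\pm}$ and $f_2^{\pm}=(s^{\pm}-2)A^{\pm}+B^{\pm}$ in primitive variables, replaces $s^{\pm}$ by the extremal eigenvalue of $\vec U^{\pm}$, and carries out roughly a page of EOS algebra to pin down the signs; your argument---reading off the signs of both factors directly from (i)--(iii) together with $|s^{\pm}|<1$, via the splittings $(2-s^{-})A^{-}-B^{-}=(A^{-}-B^{-})+(1-s^{-})A^{-}$ and the bound $B^{-}<s^{-}A^{-}$ from (iii)---is substantially shorter and requires no fresh EOS computation. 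For (vii) the two approaches are essentially the same quadratic analysis in different coordinates: the paper keeps $s$ as the variable, locates the axis of symmetry $s_0^{\pm}$, proves monotonicity on $(-1,s_{\min}^{-}]$ (resp.\ $[s_{\max}^{+},1)$), and evaluates at the endpoint, whereas you pass to $w=s-u_n$ and compare with the roots; the terminal EOS inequality is the same either way. One minor point on the second half of (v): the paper sidesteps the branch-selection issue entirely by invoking $|s^{\ast}|<1$ (already guaranteed by the HLLC choice) to force the two factors in \eqref{eq16} to share a sign, then sandwiching $s^{\ast}$ between $B^{\pm}/A^{\pm}$, which by (iii)--(iv) lie in $(s^{-},s^{+})$; this is a touch cleaner than your sign-change-plus-case-split on $\mathcal{C}_2$.
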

\noindent
The proof is given in Appendix  \ref{appendix-B}. Based on this result, one can further get
the following theorem, which corresponds to  Theorem \ref{theorem2.2}.
\begin{theorem}
For $\vec{U}^{\pm}\in \mathcal G$, and the wave speeds $s^\pm$ defined in \eqref{eq30},
the intermediate states $\vec{U}^{\ast,\pm}$ obtained by the above HLLC Riemann solver
are PCP, that is to say, $\vec{U}^{\ast,\pm}\in \mathcal G$.
\end{theorem}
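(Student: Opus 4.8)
The plan is to mirror the proof of Theorem~\ref{theorem2.2}, the only genuinely new ingredient being the tangential momentum $m_\tau$, and to reduce everything to Lemma~\ref{lem4}. Following \cite{wu2015}, as in the one-dimensional case, $\vec{U}^{\ast,\pm}\in\mathcal{G}$ is equivalent to the three scalar inequalities $D^{\ast,\pm}>0$, $E^{\ast,\pm}>0$ and $(E^{\ast,\pm})^2-(D^{\ast,\pm})^2-(m_n^{\ast,\pm})^2-(m_\tau^{\ast,\pm})^2>0$ (note $|\vec{m}^{\ast,\pm}|^2=(m_n^{\ast,\pm})^2+(m_\tau^{\ast,\pm})^2$), and I would prove these three in turn.

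First, the positivity of the rest-mass density is read off from the first relation in \eqref{eq28}, $D^{\ast,\pm}(s^{\pm}-s^{\ast})=D^{\pm}(s^{\pm}-u_n^{\pm})$: since $D^{\pm}>0$ and item (v) of Lemma~\ref{lem4} forces $s^{\pm}-s^{\ast}$ and $s^{\pm}-u_n^{\pm}$ to have the same sign, one gets $D^{\ast,\pm}>0$. Next, for $E^{\ast,\pm}>0$ I would set $\mathcal{A}^{\pm}=E^{\ast,\pm}(s^{\pm}-s^{\ast})(1-s^{\pm}s^{\ast})$; because the $(D,m_n,E)$ block of \eqref{eq28} together with \eqref{eq29} and \eqref{eq91} is structurally identical to its one-dimensional counterpart, the same completion of the square that produced \eqref{eq90} gives $\mathcal{A}^{\pm}=A^{\pm}\big(s^{\ast}-\frac{s^{\pm}A^{\pm}+B^{\pm}}{2A^{\pm}}\big)^2+\frac{4(A^{\pm})^2-(s^{\pm}A^{\pm}+B^{\pm})^2}{4A^{\pm}}$. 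Items (i) and (vi) of Lemma~\ref{lem4} then give $\mathcal{A}^{-}<0$ and $\mathcal{A}^{+}>0$, and dividing by $(s^{\pm}-s^{\ast})(1-s^{\pm}s^{\ast})$, whose sign is fixed by item (v) together with $|s^{\pm}|<1$ (built into \eqref{ws2d}), yields $E^{\ast,\pm}>0$.

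The one genuinely two-dimensional step is the bound on the velocity. I would set $\mathcal{B}^{\pm}=\big[(E^{\ast,\pm})^2-(D^{\ast,\pm})^2-(m_n^{\ast,\pm})^2-(m_\tau^{\ast,\pm})^2\big](s^{\pm}-s^{\ast})^2(A^{\pm}+s^{\pm}p^{\ast})^2$, whose prefactor is a nonvanishing square (by item (v) of Lemma~\ref{lem4} and \eqref{eq29}), so it suffices to prove $\mathcal{B}^{\pm}>0$. The only new term, $-(m_\tau^{\ast,\pm})^2$, is controlled by the third relation in \eqref{eq28}: $m_\tau^{\ast,\pm}(s^{\pm}-s^{\ast})=m_\tau^{\pm}(s^{\pm}-u_n^{\pm})$ gives $(m_\tau^{\ast,\pm})^2(s^{\pm}-s^{\ast})^2=(m_\tau^{\pm})^2(s^{\pm}-u_n^{\pm})^2$, which, combined with $(D^{\ast,\pm})^2(s^{\pm}-s^{\ast})^2=(D^{\pm})^2(s^{\pm}-u_n^{\pm})^2$, makes the quantity that plays the role of $\mathcal{K}^{\pm}$ in the one-dimensional expansion become exactly $\mathcal{K}^{\pm}=(A^{\pm})^2-(B^{\pm})^2-(D^{\pm})^2(s^{\pm}-u_n^{\pm})^2-(m_\tau^{\pm})^2(s^{\pm}-u_n^{\pm})^2$, which is precisely the quantity shown positive in item (vii) of Lemma~\ref{lem4}. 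With this identification the algebra leading to \eqref{eq25} carries over line by line, giving $\mathcal{B}^{\pm}=[1-(s^{\pm})^2](B^{\pm}+p^{\ast})^2+(A^{\pm}-s^{\pm}B^{\pm})^2(p^{\ast})^2+\mathcal{K}^{\pm}(A^{\pm}+s^{\pm}p^{\ast})^2>0$ since $|s^{\pm}|<1$ and $\mathcal{K}^{\pm}>0$; hence the bracket is positive and $|\vec{u}^{\ast,\pm}|<1$. Combining the three inequalities gives $\vec{U}^{\ast,\pm}\in\mathcal{G}$.

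I expect the real difficulty to lie not in this theorem but in Lemma~\ref{lem4}, whose proof is deferred to the appendix: the obstacle there is the purely multidimensional coupling of the left and right flow regions through the Lorentz factor, which in two dimensions also carries the tangential velocity. Granting Lemma~\ref{lem4}, the only point needing care here is that the extra tangential contribution $-(m_\tau^{\ast,\pm})^2$ be absorbed cleanly into $\mathcal{K}^{\pm}$ without generating additional cross terms in \eqref{eq25}; this works exactly as in one dimension because $m_\tau$ enters \eqref{eq28} only through a pure Rankine--Hugoniot relation of the same shape as the one governing $D$, so it contributes to \eqref{eq25} just as $D$ does.
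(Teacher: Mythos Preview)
Your proposal is correct and mirrors the paper's proof essentially line by line: the same quantities $\mathcal{A}^{\pm}$, $\mathcal{B}^{\pm}$, and $\mathcal{K}^{\pm}$ are introduced, the tangential momentum is absorbed into $\mathcal{K}^{\pm}$ via the Rankine--Hugoniot relation exactly as you describe, and items (i), (v), (vi), (vii) of Lemma~\ref{lem4} supply the needed signs. The only discrepancy is a typo you inherit from \eqref{eq25}: the first term of the final sum-of-squares for $\mathcal{B}^{\pm}$ should read $[1-(s^{\pm})^2](p^{\ast})^2(B^{\pm}+p^{\ast})^2$ (as the paper's own 2D computation has it), but this is immaterial since every term remains a nonnegative square.
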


\begin{proof}Similarly,   one has to prove
$D^{\ast,\pm}>0$, $E^{\ast,\pm}>0$, and
$(E^{\ast,\pm})^2-(D^{\ast,\pm})^2-|\vec{m}^{\ast,\pm}|^2>0$.

(\romannumeral1) Due to the first equation in \eqref{eq28} and (\romannumeral5) in Lemma \ref{lem4}, it is easy to know that
$D^{\ast,\pm}>0$.

(\romannumeral2) According to (\ref{eq28}) and (\ref{eq29}), for the left or the right state one has
\begin{align}
\mathcal{A}^{\pm}&=E^{\ast,\pm}(s^{\pm}-s^{\ast})(1-s^{\pm}s^{\ast})\notag\\
&=(E^{\pm}(s^{\pm}-u_n^{\pm})+p^{\ast}s^{\ast}-p^{\pm}u_n^{\pm})(1-s^{\pm}s^{\ast})\notag\\
&=(A^{\pm}+p^{\ast}s^{\ast})(1-s^{\pm}s^{\ast})
=A^{\pm}\left(s^{\ast}-\frac{s^{\pm}A^{\pm}+B^{\pm}}{2A^{\pm}}\right)^2+\frac{4(A^{\pm})^2-(s^{\pm}A^{\pm}+B^{\pm})^2}{4A^{\pm}}.\label{eq92}
\end{align}
Using the conclusion (\romannumeral1) in   Lemma \ref{lem4} gives
\begin{equation}\label{eq71}
\begin{aligned}
\mathcal{A}^{-}&=A^{-}\left(s^{\ast}-\frac{s^{-}A^{-}+B^{-}}{2A^{-}}\right)^2+\frac{4(A^{-})^2-(s^{-}A^{-}+B^{-})^2}{4A^{-}}
\le\frac{4(A^{-})^2-(s_{-}A_{-}+B_{-})^2}{4A^{-}},\\
\mathcal{A}^{+}&=A^{+}\left(s^{\ast}-\frac{s^{+}A^{+}+B^{+}}{2A^{+}}\right)^2+\frac{4(A^{+})^2-(s^{+}A^{+}+B^{+})^2}{4A^{+}}
\ge\frac{4(A^{+})^2-(s^{+}A^{+}+B^{+})^2}{4A^{+}}.\\
\end{aligned}
\end{equation}
Combining \eqref{eq92}, \eqref{eq71} with Lemma \ref{lem4}  can easily show
$\mathcal{A}^{-}<0$ and $\mathcal{A}^{+}>0$,
which implies $E^{\ast,-}>0$ and $E^{\ast,+}>0$.

(\romannumeral3) Define
$\mathcal{B}^{\pm}=\big[(E^{\ast,\pm})^2-(D^{\ast,\pm})^2-|\vec{m}^{\ast,\pm}|^2\big]
(s^{\pm}-s^\ast)^2(A^{\pm}+s^{\pm}p^\ast)^2$,
and then use  \eqref{eq28} and \eqref{eq29} to give
\begin{align*}
\mathcal{B}^{\pm}&=(A^{\pm}+s^{\pm}p^\ast)^2\big[(A^{\pm}
+p^\ast s^\ast)^2-(B^{\pm}+p^\ast)^2-(D^{\pm})^2(s^{\pm}-u_n^{\pm})^2-(m_{\tau}^{\pm})^2(s^{\pm}-u_n^{\pm})^2\big] \\
&=(A^{\pm}+s^{\pm}p^\ast)^2\bigg((p^\ast)^2(s^\ast)^2+2A^{\pm}p^\ast s^{\ast}-(2B^{\pm}+p^\ast)p^\ast+\mathcal{K}^{\pm}\bigg) \\
&=[1-(s^{\pm})^2](p^\ast)^4+2B^{\pm}[1-(s^{\pm})^2](p^{\ast})^3+2s^{\pm}A^{\pm}
\mathcal{K}^{\pm}p^\ast+(A^{\pm})^2\mathcal{K}^{\pm} \\
&~~+\bigg((A^{\pm}-s^{\pm}B^{\pm})^2+[1-(s^{\pm})^2](B^{\pm})^2+(s^{\pm})^2\mathcal{K}^{\pm}\bigg)(p^\ast)^2\notag\\
&=[1-(s^{\pm})^2](p^\ast)^2(B^{\pm}+p^{\ast})^2+(A^{\pm}-s^{\pm}B^{\pm})^2(p^{\ast})^2
+\mathcal{K}^{\pm}(A^{\pm}+s^{\pm}p^{\ast})^2,
\end{align*}
where
$$\mathcal{K}^{\pm}=(A^{\pm})^2-(B^{\pm})^2-(D^{\pm})^2(s^{\pm}-u_n^{\pm})^2-(m_{\tau}^{\pm})^2(s^{\pm}-u_n^{\pm})^2.$$
Finally, using (\romannumeral7) in Lemma \ref{lem4} gives $\mathcal{K}^{\pm}>0$. Thus one has
$\mathcal{B}^{\pm}>0$, which implies $(E^{\ast,\pm})^2-(D^{\ast,\pm})^2-|\vec{m}^{\ast,\pm}|^2>0$.
The proof is completed.
\end{proof}

\subsection{First-order accurate scheme}

For the first-order accurate Lagrangian scheme,
$\vec{U}_{m,ij}^{-}$ and $\vec{U}_{m,ij}^{+}$ in \eqref{eq72} are taken as the cell average values
of the conservative vector $\vec U$ in the cell  $I_{ij}^n$ and its neighboring cell
sharing the cell edge $\partial I_{ij}^{n,m}$,
$m=1,2,3,4$. The scheme \eqref{eq4} becomes
\begin{equation}\label{eq73}
\overline{\vec{U}}_{ij}^{n+1}A_{ij}^{n+1}=\overline{\vec{U}}_{ij}^{n}A_{ij}^{n}-\Delta t^n\sum\limits_{m=1}^4
\widehat{\vec{\mathcal{F}}}_{\vec{n}_{ij}^m}(\overline{\vec{U}}_{ij}^{n},\overline{\vec{U}}_m^{\text{ext}(I^n_{ij})})|l_{ij}^m|,
\end{equation}
where $\overline{\vec{U}}_m^{\text{ext}(I^n_{ij})}$ is the cell average of $\vec{U}$ over the neighboring cell
of $I^n_{ij}$
sharing the  edge $\partial I^{n,m}_{ij}$ with $I^n_{ij}$.
The vertices of the cell  will be evolved by
\begin{equation}\label{eq8-2b}
\begin{aligned}
&\vec x_{i+\frac{1}{2},j+\frac12}^{n+1}=\vec x_{i+\frac{1}{2},j+\frac12}^{n}+\Delta t^n\vec u_{i+\frac{1}{2},j+\frac12}^{n},
\end{aligned}
\end{equation}
where the velocity $\vec u_{i+\frac{1}{2},j+\frac12}^{n}$
is calculated as follows:
$$
\vec u_{i+\frac{1}{2},j+\frac12}^{n}=\frac14 \left(
  \vec u_{i+\frac{1}{2},j+1}^{\ast}+\vec u_{i+\frac{1}{2},j}^{\ast}
+\vec u_{i+1,j+\frac12}^{\ast}+\vec u_{i,j+\frac12}^{\ast} \right),
$$
where $\vec u_{i+\frac{1}{2},\ell}^{\ast}$ and $\vec u_{\ell',j+\frac12}^{\ast}$,
  $\ell=j,j+1$, $\ell'=i,i+1$, are  fluid velocities at midpoints of four cell edges
  with  a common vertex (e.g. the point P in Figure \ref{cell-2d}), respectively.
Here take the calculation of $\vec u_{i+\frac{1}{2},j}^{\ast}$ as an example.
The velocity $\vec u_{i+\frac{1}{2},j}^{\ast}$ is gotten by using the local rotation transformation
of $(u_n,u_\tau)_{i+\frac{1}{2},j}^{\ast}$, where
$(u_n)_{i+\frac{1}{2},j}^{\ast}=s_{i+\frac{1}{2},j}^{\ast}$,  $(u_\tau)_{i+\frac{1}{2},j}^{\ast}=\frac{1}{2}
\left( (u_\tau)_{i,j}^{n}+(u_\tau)_{i+1,j}^{n}\right)$, and $s_{i+\frac{1}{2},j}^{\ast}$ is the
speed of contact discontinuity in the HLLC solver, see \eqref{eq-contact-speed}.

Because the flux $\vec{\mathcal{F}}_{\vec{n}}(\vec{U})$ in \eqref{eq4} can be written as follows
\begin{equation*}
\vec{\mathcal{F}}_{\vec{n}}(\vec{U})=\begin{pmatrix}
0\\p\vec{n}\\p\vec{u}^T\cdot\vec{n}
\end{pmatrix}=\widetilde{\vec{\mathcal{F}}}(\vec{U})\vec{n},~~
\widetilde{\vec{\mathcal{F}}}(\vec{U})=\begin{pmatrix}
0&0\\p&0\\0&p\\pu_x&pu_y
\end{pmatrix},
\end{equation*}
and   the geometrical relation $\sum\limits_{m=1}^{4}\vec{n}_{ij}^m|l_{ij}^m|=\vec{0}$ holds,
  utilizing the flux consistency gives
\begin{equation*}
\sum\limits_{m=1}^{4}\widehat{\vec{\mathcal{F}}}_{\vec{n}_{ij}^m}(\overline{\vec{U}}_{ij}^{n},\overline{\vec{U}}_{ij}^{n})|l_{ij}^m|
=\sum\limits_{m=1}^{4}\vec{\mathcal{F}}_{\vec{n}_{ij}^m}(\overline{\vec{U}}_{ij}^{n})|l_{ij}^m|
=\sum\limits_{m=1}^{4}\widetilde{\vec{\mathcal{F}}}(\overline{\vec{U}}_{ij}^{n})\vec{n}_{ij}^m|l_{ij}^m|
=\widetilde{\vec{\mathcal{F}}}(\overline{\vec{U}}_{ij}^{n})\sum\limits_{m=1}^{4}
\vec{n}_{ij}^m|l_{ij}^m|=\vec{0}.
\end{equation*}
Adding the identity $\vec{0}=\Delta t^n\sum\limits_{m=1}^{4}\widehat{\vec{\mathcal{F}}}_{\vec{n}_{ij}^m}(\overline{\vec{U}}_{ij}^{n},\overline{\vec{U}}_{ij}^{n}
)|l_{ij}^m|$ into the scheme \eqref{eq73}  gives
\begin{align*}
\overline{\vec{U}}_{ij}^{n+1}A_{ij}^{n+1}&=\overline{\vec{U}}_{ij}^{n}A_{ij}^{n}-\Delta t^n\sum\limits_{m=1}^4
\widehat{\vec{\mathcal{F}}}_{\vec{n}_{ij}^m}(\overline{\vec{U}}_{ij}^{n},\overline{\vec{U}}_m^{\text{ext}(I_{ij})})|l_{ij}^m|
+\Delta t^n\sum\limits_{m=1}^{4}\widehat{\vec{\mathcal{F}}}_{\vec{n}_{ij}^m}(\overline{\vec{U}}_{ij}^{n},\overline{\vec{U}}_{ij}^{n}
)|l_{ij}^m|
\\
&=\overline{\vec{U}}_{ij}^{n}A_{ij}^{n}-\Delta t^n\sum\limits_{m=1}^4
\left(\widehat{\vec{\mathcal{F}}}_{\vec{n}_{ij}^m}(\overline{\vec{U}}_{ij}^{n},\overline{\vec{U}}_m^{\text{ext}(I_{ij})})
-\widehat{\vec{\mathcal{F}}}_{\vec{n}_{ij}^m}(\overline{\vec{U}}_{ij}^{n},\overline{\vec{U}}_{ij}^{n})\right)|l_{ij}^m|
\\
&=\sum\limits_{m=1}^4|l_{ij}^m|\left[\overline{\vec{U}}_{ij}^{n}\widetilde{A}_{ij}^{n}-\Delta t^n
\left(\widehat{\vec{\mathcal{F}}}_{\vec{n}_{ij}^m}(\overline{\vec{U}}_{ij}^{n},\overline{\vec{U}}_m^{\text{ext}(I_{ij})})
-\widehat{\vec{\mathcal{F}}}_{\vec{n}_{ij}^m}(\overline{\vec{U}}_{ij}^{n},\overline{\vec{U}}_{ij}^{n})\right)\right]
=\sum\limits_{m=1}^4|l_{ij}^m|\mathcal{H}_m,
\end{align*}
where
\begin{equation*}
\widetilde{A}_{ij}^{n}=\frac{A_{ij}^{n}}{\sum\limits_{m=1}^4|l_{ij}^m|},~~~
\mathcal{H}_m=\overline{\vec{U}}_{ij}^{n}\widetilde{A}_{ij}^{n}-\Delta t^n
\left(\widehat{\vec{\mathcal{F}}}_{\vec{n}_{ij}^m}(\overline{\vec{U}}_{ij}^{n},\overline{\vec{U}}_m^{\text{ext}(I_{ij})})
-\widehat{\vec{\mathcal{F}}}_{\vec{n}_{ij}^m}(\overline{\vec{U}}_{ij}^{n},\overline{\vec{U}}_{ij}^{n})\right).
\end{equation*}
It means that the scheme \eqref{eq73} can be expressed as a combination of $\mathcal{H}_m$ with positive coefficients,
while $\mathcal{H}_m$ has the same form as the first order scheme \eqref{eq8} with \eqref{eq9} and HLLC solver.
Thus the analysis in Section \ref{section:1D-PCP}
can be applied to get the sufficient condition for that $\mathcal{H}_m$ is in the
admissible state set $\mathcal{G}$. Summarizing  those draws the following conclusion.
\begin{theorem}
For the first order finite volume Lagrangian scheme \eqref{eq73},
if $\{\overline{\vec{U}}_{ij}^{n}\in \mathcal{G},\forall i=1,
\cdots,N_x;j=1,\cdots,N_y\}$, then we have $\overline{\vec{U}}_{ij}^{n+1}\in \mathcal{G}$ for all $i=1,\cdots,N_x;j=1,\cdots,N_y$
under the wave speed estimates \eqref{eq30} and the following time step restriction
\begin{equation}\label{eq:dt2}
\Delta t^n\le\lambda\min\limits_{i,j}\left\{\frac{A_{ij}^{n}}{\sum\limits_{m=1}^4|l_{ij}^m|}
/\max\big(|s_{\min}(\overline{\vec{U}}_{ij}^n)|,|s_{\max}(\overline{\vec{U}}_{ij}^n)|\big)\right\},
\end{equation}
where the CFL number $\lambda\le\frac{1}{2}$.
\end{theorem}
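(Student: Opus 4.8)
The plan is to build on the reformulation of the scheme \eqref{eq73} obtained just above, namely $\overline{\vec{U}}_{ij}^{n+1}A_{ij}^{n+1}=\sum_{m=1}^{4}|l_{ij}^{m}|\,\mathcal{H}_{m}$, where each $\mathcal{H}_{m}=\overline{\vec{U}}_{ij}^{n}\widetilde{A}_{ij}^{n}-\Delta t^{n}\big(\widehat{\vec{\mathcal{F}}}_{\vec{n}_{ij}^{m}}(\overline{\vec{U}}_{ij}^{n},\overline{\vec{U}}_{m}^{\text{ext}(I_{ij})})-\widehat{\vec{\mathcal{F}}}_{\vec{n}_{ij}^{m}}(\overline{\vec{U}}_{ij}^{n},\overline{\vec{U}}_{ij}^{n})\big)$ with $\widetilde{A}_{ij}^{n}=A_{ij}^{n}/\sum_{m}|l_{ij}^{m}|$. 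The admissible set $\mathcal{G}$ is cut out by $D>0$ and $q(\vec{U})=E-\sqrt{D^{2}+|\vec{m}|^{2}}>0$, both positively homogeneous of degree one, so $\mathcal{G}$ is a convex cone and hence closed under positive linear combinations. Therefore the theorem follows once (a) each $\mathcal{H}_{m}\in\mathcal{G}$ and (b) $A_{ij}^{n+1}>0$ are established, since then $\overline{\vec{U}}_{ij}^{n+1}=\big(A_{ij}^{n+1}\big)^{-1}\sum_{m}|l_{ij}^{m}|\,\mathcal{H}_{m}\in\mathcal{G}$.

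For (a) I would rotate the local frame so that the $m$th outward normal $\vec{n}_{ij}^{m}$ becomes the $x$-axis. Then $\widehat{\vec{\mathcal{F}}}_{\vec{n}_{ij}^{m}}(\overline{\vec{U}}_{ij}^{n},\overline{\vec{U}}_{m}^{\text{ext}(I_{ij})})$ is exactly the Lagrangian HLLC flux of the $x$-split 2D RHD system for the local Riemann problem with left state $\overline{\vec{U}}_{ij}^{n}$ and right state $\overline{\vec{U}}_{m}^{\text{ext}(I_{ij})}$, whereas $\widehat{\vec{\mathcal{F}}}_{\vec{n}_{ij}^{m}}(\overline{\vec{U}}_{ij}^{n},\overline{\vec{U}}_{ij}^{n})$ is the physical pure-pressure flux, i.e. the HLLC flux of the trivial Riemann problem with coincident data, which carries no waves. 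Consequently $\mathcal{H}_{m}$ has precisely the algebraic form of the first-order Lagrangian HLLC update \eqref{eq8}--\eqref{eq9} for a one-dimensional cell of size $\widetilde{A}_{ij}^{n}$ and step $\Delta t^{n}$ whose only non-trivial Riemann problem sits on the edge-$m$ side. Repeating the Riemann-solver argument that led to \eqref{eq21}--\eqref{eq:hllc1d} (the edge, hence the contact, moving with the HLLC speed $s^{\ast}$), the restriction \eqref{eq:dt2} --- which is nothing but the one-dimensional condition \eqref{eq:dt1} with $\widetilde{A}_{ij}^{n}$ in place of $\Delta x_{i}^{n}$ and $\lambda\le\frac{1}{2}$ --- ensures that the Riemann fans in the four sub-cells of $I_{ij}^{n}$ do not interact within one step, so $\mathcal{H}_{m}$ lies in the convex cone generated by $\overline{\vec{U}}_{ij}^{n}$ and the edge-$m$ intermediate state $\vec{U}^{\ast,-}$. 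The former belongs to $\mathcal{G}$ by hypothesis, and the latter by the theorem just proved on the admissibility of the 2D HLLC intermediate states (the analogue of Theorem \ref{theorem2.2}), whose proof rests on Lemma \ref{lem4}; convexity of $\mathcal{G}$ then yields $\mathcal{H}_{m}\in\mathcal{G}$.

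I expect the main obstacle to be precisely this geometric bookkeeping: verifying carefully that the algebraic object $\mathcal{H}_{m}$ indeed coincides with a bona fide one-dimensional first-order HLLC half-update, so that the conclusions of Theorems \ref{theorem2.2}--\ref{theorem2.3} transfer, and that \eqref{eq:dt2} is precisely the associated sub-cell CFL bound. One must also check that the tangential momentum $m_{\tau}$ transported through the split 2D Riemann problem does not spoil admissibility; this is exactly why item (vii) of Lemma \ref{lem4} carries the extra term $(m_{\tau}^{\pm})^{2}(s^{\pm}-u_{n}^{\pm})^{2}$ compared with item (vii) of Lemma \ref{lem3}. A secondary point that should be stated explicitly is the non-degeneracy $A_{ij}^{n+1}>0$ of the updated quadrilateral, which must hold --- whether forced by the smallness of $\Delta t^{n}$ or imposed as a mesh-validity hypothesis --- for the convex-cone argument to close. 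The remaining estimates are routine consequences of items (i)--(vii) of Lemma \ref{lem4}, exactly as in the proofs of Theorems \ref{theorem2.2} and \ref{theorem2.3}.
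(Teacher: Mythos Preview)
Your proposal is correct and follows essentially the same route as the paper: the paper establishes the decomposition $\overline{\vec{U}}_{ij}^{n+1}A_{ij}^{n+1}=\sum_{m=1}^{4}|l_{ij}^{m}|\,\mathcal{H}_{m}$ just before the theorem and then observes that each $\mathcal{H}_{m}$ has the form of the 1D first-order HLLC update (with $\widetilde{A}_{ij}^{n}$ playing the role of $\Delta x_i^n$), so that Theorem~\ref{theorem2.3} and the 2D analogue of Theorem~\ref{theorem2.2} apply. Your write-up is in fact more explicit than the paper's on two points the paper leaves implicit --- the convex-cone structure of $\mathcal{G}$ and the mesh-validity assumption $A_{ij}^{n+1}>0$ --- but the underlying argument is the same.
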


\subsection{High-order accurate scheme}
In view of that the Lagrangian methods on the
the quadrilateral grid with  straight edge can be at most second order accurate anyway \cite{cheng2}, this section focuses on developing the two-dimensional second-order accurate PCP Lagrangian finite volume schemes with the HLLC solver, based on the  initial reconstruction,
the scaling PCP limiter, and the second order Runge-Kutta time discretization.

The following discusses the initial reconstruction and the scaling PCP limiter. At this time,
the two-dimensional  Lagrangian finite volume scheme \eqref{eq72}  should be replaced with
\begin{equation}\label{eq80}
\overline{\vec{U}}_{ij}^{n+1}A_{ij}^{n+1}=\overline{\vec{U}}_{ij}^{n}A_{ij}^{n}-\Delta t^n\sum\limits_{m=1}^{4}
\sum\limits_{\alpha=1}^{L}\omega_{\alpha}\widehat{\vec{\mathcal{F}}}_{\vec{n}_{ij}^m}\big((\vec{U}_{m,ij}^{\alpha})^{-},
(\vec{U}_{m,ij}^{\alpha})^{+}\big)|l_{ij}^m|,
\end{equation}
which is derived by approximating  the line integral in \eqref{eq72}  via the $L$-point Gauss-Lobatto quadrature,
where $\{(\vec{U}_{m,ij}^{\alpha})^{-},(\vec{U}_{m,ij}^{\alpha})^{+}\}$ are the left and right limit values from the inside and outside of the cell $I^n_{ij}$, respectively, approximating  the
conservative variable $\vec{U}$ at $\vec{x}_{m,ij}^{\alpha}$, $\alpha=1,\dots,L$, which are the $L$
Gauss-Lobatto quadrature points mapped onto ${\partial I_{ij}^{n,m}}$.
Those limit values are obtained by using the  high-order WENO reconstruction  used in \cite{cheng3}.
To be specific, for the edge $\partial I^{n,1}_{ij}$ and the five known cell-average values $\{\overline{\vec{U}}_{i,j}^{n},\overline{\vec{U}}_{i-1,j}^{n},\overline{\vec{U}}_{i+1,j}^{n},\overline{\vec{U}}_{i,j-1}^{n},\overline{\vec{U}}_{i,j+1}^{n}\}$
of the solutions of the RHD equations \eqref{eq1} or \eqref{eq4}  with $d=2$,
we first transform those cell average values $\{\overline{\vec{U}}_{i,j}^{n}\}$ into $\{\overline{\mathbf {U}}_{i,j}^{n}\}$ by using the local rotational transformation
from the $(x,y)$ coordinates to the local $(\xi,\eta)$ coordinates,
where $\xi$ and $\eta$ are in the $\bm{n}^{n,1}_{ij}$ and $\bm{\tau}^{\tau,1}_{ij}$ directions, respectively,
see Figure \ref{fig:WENO},
and then  calculate the values of corresponding characteristic variables $\vec{W}_{i,j}^n=\vec L\overline{\mathbf U}_{i,j}^{n},\vec{W}_{i-1,j}^n=\vec L\overline{\mathbf U}_{i-1,j}^{n},
\vec{W}_{i+1,j}^n=\vec L\overline{\mathbf U}_{i+1,j}^{n},\vec{W}_{i,j+1}^n=\vec L\overline{\mathbf U}_{i,j+1}^{n},\vec{W}_{i,j-1}^n=\vec L\overline{\mathbf U}_{i,j-1}^{n}$, where $\vec L=\vec L(\overline{\vec{U}}_{i,j}^{n})$ is the left eigen matrix of $\partial \vec F_{\vec n}/\partial \vec U$.
Let us consider the following four stencils
$S_1=\{I^{n}_{i,j},I^{n}_{i+1,j},I^{n}_{i,j+1}\}$, $S_2=\{I^{n}_{i,j},I^{n}_{i-1,j},I^{n}_{i,j+1}\}$,
$S_3=\{I^{n}_{i,j},I^{n}_{i+1,j},I^{n}_{i,j-1}\}$, and $S_4=\{I^{n}_{i,j},I^{n}_{i-1,j},I^{n}_{i,j-1}\}$.

\begin{figure}[!ht]
	\centering
	\includegraphics[width=0.5\textwidth]{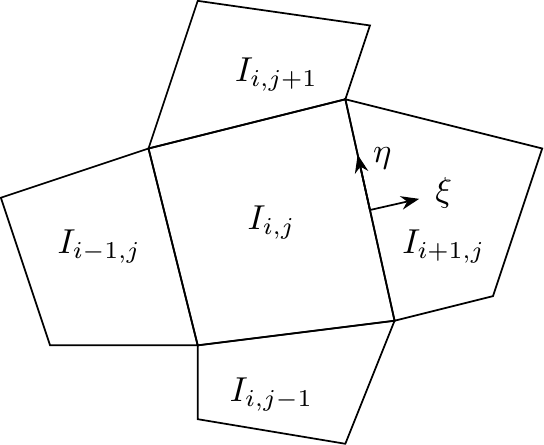}
	\caption{The local coordinates and the stencils in the WENO reconstruction.}
	\label{fig:WENO}
\end{figure}
For the  stencil $S_q$, $q=1,2,3,4$, one can obtain a linear polynomial
$$\widetilde{\vec{W}}_{q}(\xi,\eta)=\vec{a}_q(\xi-\xi_{ij})+\vec{b}_q(\eta-\eta_{ij})+\vec{c}_q,$$
where $(\xi_{ij},\eta_{ij})$ is the barycenter of $I^n_{ij}$
and the coefficients $\{\vec{a}_q,\vec{b}_q,\vec{c}_q\}$ are  determined
by preserving the cell average values, e.g. for $q=1$
\begin{align*}
&\int_{I_{i,j}}\widetilde{\vec{W}}_{1}(\xi,\eta)d\xi d\eta=\vec{W}^n_{i,j}A^n_{i,j},\\
&\int_{I_{i+1,j}}\widetilde{\vec{W}}_{1}(\xi,\eta)d\xi d\eta=\vec{W}^n_{i+1,j}A^n_{i+1,j},\\
&\int_{I_{i,j+1}}\widetilde{\vec{W}}_{1}(\xi,\eta)d\xi d\eta=\vec{W}^n_{i,j+1}A^n_{i,j+1}.
\end{align*}
Using those  can give the final linear polynomial
$\widetilde{\vec{W}}(\vec x)=\widetilde{\vec{W}}(\xi,\eta)=\vec{a}(\xi-\xi_{ij})+\vec{b}(\eta-\eta_{ij})+\vec{c}$
with the coefficients $\{\vec{a},\vec{b},\vec{c}\}$ determined by
\begin{equation*}
\vec{a}=\sum_{q=1}^4  {\omega}_q\vec{a}_q,~\vec{b}=\sum_{q=1}^4  {\omega}_q\vec{b}_q,~\vec{c}=\sum_{q=1}^4 {\omega}_q\vec{c}_q,
\end{equation*}
where the weights $\{ {\omega}_q\}$ are defined by
\begin{equation}
{\omega}_q=\dfrac{\widetilde{{\omega}}_q}{\sum_{r=1}^4 \widetilde{{\omega}}_r},\quad
\widetilde{{\omega}}_r=\dfrac{1}{[(|\vec{a}_r|^2+|\vec{b}_r|^2)A^n_{i,j}+\epsilon]^2}, \
\epsilon=10^{-6}.
\end{equation}
Using the polynomial $\widetilde{\vec{W}}(\vec x)$ calculates  its values
at the  Gauss-Lobatto point and then gives
$(\mathbf {U}_{1,ij}^{\alpha})^-=\vec L^{-1}  \widetilde{\vec{W}}({\vec x}^{\alpha}_{1,ij})$. Finally,
the values $(\vec{U}_{1,ij}^{\alpha})^-$ can be obtained by using the inverse rotation transformation.
It is worth noting that
as soon as the limit value $(\mathbf {U}_{m,ij}^{\alpha})^{-}$ and $(\mathbf{U}_{m,ij}^{\alpha})^{+}$  can give
the HLLC flux and HLLC intermediate states
corresponding to the local 1D Riemann problem located at the point $\vec{x}_{m,ij}^{\alpha}$, $\alpha=1,\dots,L$.

\begin{figure}[!ht]
\centering
\includegraphics[width=13.5cm]{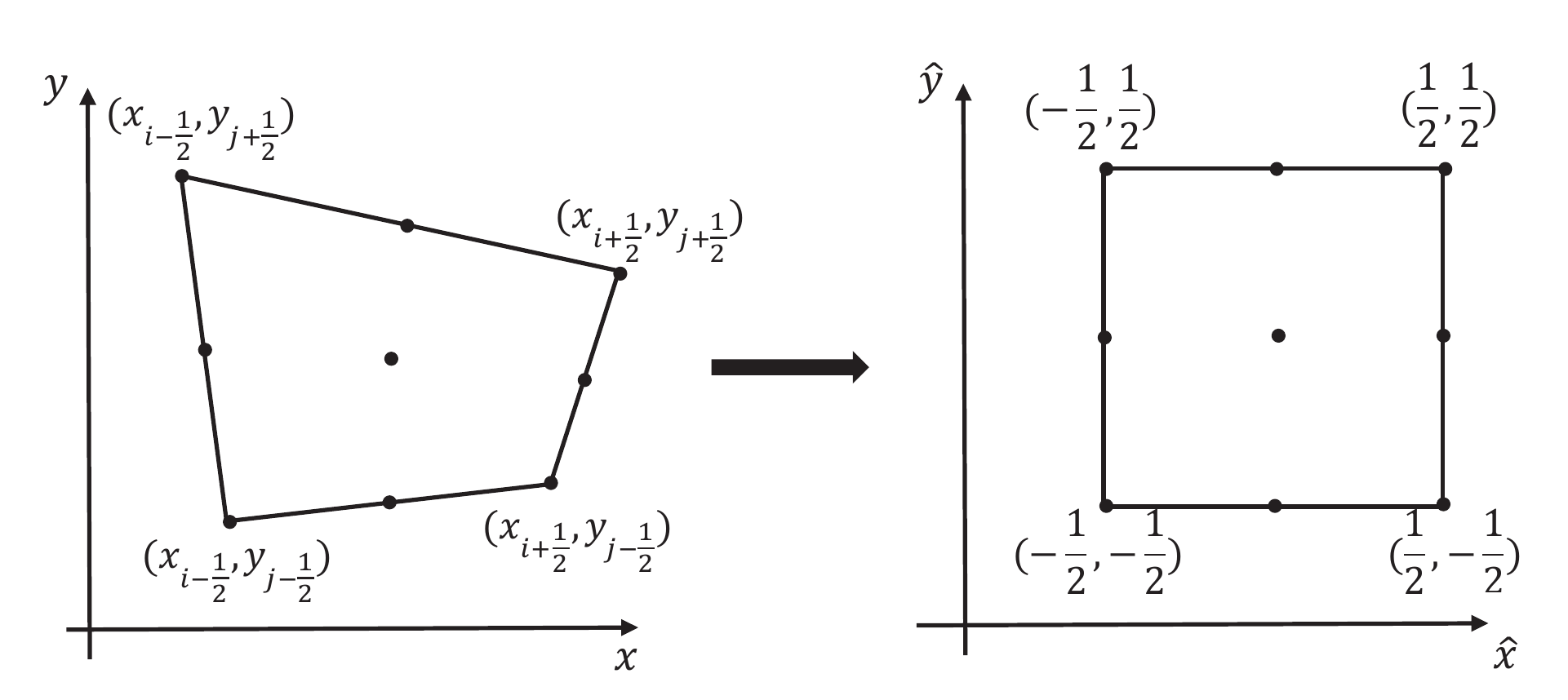}
\caption{The quadrilateral in the $(x,y)$ plane   mapped  into
the square in the $(\hat{x}, \hat{y})$ plane.}
\label{transformation}
\end{figure}

To give a decomposition similar to \eqref{eq2.25}, we use a coordinate transformation
$\vec x=\vec x_{ij}(\hat{x}, \hat{y})$ to transform the quadrilateral cell $I_{ij}^n$  in the $(x,y)$
plane  to the unit square $\hat{I}_0=[-\frac{1}{2},\frac{1}{2}]
\times[-\frac{1}{2},\frac{1}{2}]$ in the $(\hat{x}, \hat{y})$ plane,
see   Figure \ref{transformation},
then   define the set of the 2D  Gauss-Lobatto quadrature points in the cell $I_{ij}^n$  by
\begin{equation}\label{2dg-l points}
S_{ij}=\{(x_{\alpha},y_{\beta}),\alpha=1,\dots,L,\beta=1,\dots,L\},
\end{equation}
which are derived  by inversely transformed   the Gauss-Lobatto quadrature points in the unit square $\hat{I}_0$.
Because only the second-order accurate scheme is considered here,
one may  apply the tensor product Simpson quadrature rule, in which the quadrature points consist of the cell vertices, the mid-points of each edge and the
cell center, see Figure \ref{transformation}, i.e.~$L=3$, $\omega_1=\omega_3=\frac{1}{6}$, and $\omega_2=\frac{2}{3}$.
Based on those,  the term $\overline{\vec{U}}^n_{ij}A^n_{ij}$   can be decomposed  into
\begin{align}
\overline{\vec{U}}^n_{ij}A^n_{ij}&=\sum\limits_{\alpha=1}^{3}\omega_{\alpha}\omega_1
 J_{ij}^{\alpha,1}
\vec{U}_{1,ij}^{\alpha,1}+\omega_{\ast}^{1}\vec{U}_{1,ij}^{\ast\ast}
=\sum\limits_{\alpha=1}^{3}\omega_3\omega_{\alpha}
 J_{ij}^{3,\alpha}
\vec{U}_{2,ij}^{3,\alpha}+\omega_{\ast}^{2}\vec{U}_{2,ij}^{\ast\ast}\notag\\
&=\sum\limits_{\alpha=1}^{3}\omega_{\alpha}\omega_3
J_{ij}^{\alpha,3}
\vec{U}_{3,ij}^{\alpha,3}+\omega_{\ast}^{3}\vec{U}_{3,ij}^{\ast\ast}
=\sum\limits_{\alpha=1}^{3}\omega_{1}\omega_\alpha
J_{ij}^{_1,\alpha}
\vec{U}_{4,ij}^{1,\alpha}+\omega_{\ast}^{4}\vec{U}_{4,ij}^{\ast\ast},\label{eq77}
\end{align}
for the four edges $\partial I^{n,m}_{ij}$, $m=1,2,3,4$,
where
$J_{ij}^{\alpha,\beta}=J_{ij}(\hat{x}_\alpha,\hat{y}_\beta)$,
$J_{ij}(\hat{x},\hat{y})=\left|\frac{\partial (x_{ij}(\hat{x},\hat{y}),y_{ij}(\hat{x},\hat{y}))}{\partial(\hat{x},\hat{y})}\right|$ is the Jacobian for the coordinate transformation, and
\begin{align}
\omega_{\ast}^{1}&=\sum\limits_{\alpha=1}^{3}\sum\limits_{\beta=2}^{3}\omega_\alpha\omega_\beta
J_{ij}^{\alpha,\beta},~~~
\omega_{\ast}^{2}=\sum\limits_{\alpha=1}^{2}\sum\limits_{\beta=1}^{3}\omega_\alpha\omega_\beta
J_{ij}^{\alpha,\beta},\notag\\
\omega_{\ast}^{3}&=\sum\limits_{\alpha=1}^{3}\sum\limits_{\beta=1}^{2}\omega_\alpha\omega_\beta
J_{ij}^{\alpha,\beta},~~~
\omega_{\ast}^{4}=\sum\limits_{\alpha=2}^{3}\sum\limits_{\beta=1}^{3}\omega_\alpha\omega_\beta
J_{ij}^{\alpha,\beta}.\label{omega}
\end{align}
From \eqref{eq77}, one has
\begin{align}
\overline{\vec{U}}^n_{ij}A_{ij}^n&=\frac{1}{4}\Big(\sum\limits_{\alpha=1}^{3}\widetilde{\omega}_{\alpha,1}
\vec{U}_{1,ij}^{\alpha,1}+\omega_{\ast}^{1}\vec{U}_{1,ij}^{\ast\ast}
+\sum\limits_{\alpha=1}^{3}\widetilde{\omega}_{3,\alpha}
\vec{U}_{2,ij}^{3,\alpha}+\omega_{\ast}^{2}\vec{U}_{2,ij}^{\ast\ast} \notag\\
&~~~+  \sum\limits_{\alpha=1}^{3}\widetilde{\omega}_{\alpha,3}
\vec{U}_{3,ij}^{\alpha,3}+\omega_{\ast}^{3}\vec{U}_{3,ij}^{\ast\ast}
+\sum\limits_{\alpha=1}^{3}\widetilde{\omega}_{1,\alpha}
\vec{U}_{4,ij}^{1,\alpha}+\omega_{\ast}^{4}\vec{U}_{4,ij}^{\ast\ast}\Big)\notag\\
&=\frac{1}{4}\sum\limits_{m=1}^{4}\omega_{\ast}^{m}\vec{U}_{m,ij}^{\ast\ast}
+\frac{1}{4}\sum\limits_{\alpha=1}^{3}\big(\widetilde{\omega}_{\alpha,1}\vec{U}_{1,ij}^{\alpha,1}
+\widetilde{\omega}_{3,\alpha}\vec{U}_{2,ij}^{3,\alpha}
+\widetilde{\omega}_{\alpha,3}\vec{U}_{3,ij}^{\alpha,3}
+\widetilde{\omega}_{1,\alpha}\vec{U}_{4,ij}^{1,\alpha}\big),\label{eq82}
\end{align}
where $\widetilde{\omega}_{\alpha,\beta}=\omega_{\alpha}\omega_{\beta}J_{ij}^{\alpha,\beta}$.

Moreover, since the point values
$\vec{U}_{1,ij}^{\alpha,1},\vec{U}_{2,ij}^{3,\alpha},\vec{U}_{3,ij}^{\alpha,3},\vec{U}_{4,ij}^{1,\alpha}$
can be obtained from the WENO reconstruction, one can directly compute $\vec{U}_{m,ij}^{\ast\ast}$ from \eqref{eq77}
\begin{align}
\vec{U}_{1,ij}^{\ast\ast}&=\frac{1}{\omega_{\ast}^{1}}\left(\overline{\vec{U}}_{ij}A_{ij}
-\sum\limits_{\alpha=1}^{3}\omega_{\alpha}\omega_1J_{ij}^{\alpha,1}\vec{U}_{1,ij}^{\alpha,1}\right),~~
\vec{U}_{2,ij}^{\ast\ast}=\frac{1}{\omega_{\ast}^{2}}\left(\overline{\vec{U}}_{ij}A_{ij}
-\sum\limits_{\alpha=1}^{3}\omega_3\omega_{\alpha}J_{ij}^{3,\alpha}\vec{U}_{2,ij}^{3,\alpha}\right),\notag\\
\vec{U}_{3,ij}^{\ast\ast}&=\frac{1}{\omega_{\ast}^{3}}\left(\overline{\vec{U}}_{ij}A_{ij}
-\sum\limits_{\alpha=1}^{3}\omega_{\alpha}\omega_3J_{ij}^{\alpha,3}\vec{U}_{3,ij}^{\alpha,3}\right),~~
\vec{U}_{4,ij}^{\ast\ast}=\frac{1}{\omega_{\ast}^{4}}\left(\overline{\vec{U}}_{ij}A_{ij}
-\sum\limits_{\alpha=1}^{3}\omega_1\omega_{\alpha}J_{ij}^{1,\alpha}\vec{U}_{4,ij}^{1,\alpha}\right).\label{ustar}
\end{align}

Consequently, by adding and subtracting
the term $\Delta t^n\sum\limits_{m=2}^{4}\sum\limits_{\alpha=1}^{3}\omega_{\alpha}\widehat{\vec{\mathcal{F}}}_{\vec{n}_{ij}^m}
\big((\vec{U}_{1,ij}^{\alpha})^{-},(\vec{U}_{m,ij}^{\alpha})^{-}\big)|l_{ij}^m|$
and using \eqref{eq82} and the fact that $\omega_1=\omega_3$,
$(\vec{U}_{1,ij}^{\alpha})^{-}=\vec{U}_{1,ij}^{\alpha,1}$,
$(\vec{U}_{2,ij}^{\alpha})^{-}=\vec{U}_{2,ij}^{3,\alpha}$,
$(\vec{U}_{3,ij}^{\alpha})^{-}=\vec{U}_{3,ij}^{\alpha,3}$,
and $(\vec{U}_{4,ij}^{\alpha})^{-}=\vec{U}_{4,ij}^{1,\alpha}$,
 the scheme \eqref{eq80} becomes
\begin{align}
\overline{\vec{U}}_{ij}^{n+1}A_{ij}^{n+1}&=\overline{\vec{U}}_{ij}^{n}A_{ij}^{n}-\Delta t^n\sum\limits_{m=1}^{4}
\sum\limits_{\alpha=1}^{3}\omega_{\alpha}\widehat{\vec{\mathcal{F}}}_{\vec{n}_{ij}^m}\big((\vec{U}_{m,ij}^{\alpha})^{-},
(\vec{U}_{m,ij}^{\alpha})^{+}\big)|l_{ij}^m|\notag\\
&=\frac{1}{4}\sum\limits_{m=1}^{4}\omega_{\ast}^{m}\vec{U}_{m,ij}^{\ast\ast}
+\frac{1}{4}\omega_1\sum\limits_{\alpha=1}^{3}\omega_{\alpha}\big(\mathcal{H}_1^{\alpha}+\mathcal{H}_2^{\alpha}
+\mathcal{H}_3^{\alpha}+\mathcal{H}_4^{\alpha}\big),\label{eq83}
\end{align}
where
\begin{equation}\label{eq84}
\begin{aligned}
\mathcal{H}_1^{\alpha}&=\vec{U}_{1,ij}^{\alpha,1}J_{ij}^{\alpha,1}-\frac{4\Delta t^n}{\omega_1}
\Big(\widehat{\vec{\mathcal{F}}}_{\vec{n}_{ij}^1}\big(\vec{U}_{1,ij}^{\alpha,1},(\vec{U}_{1,ij}^{\alpha})^{+}\big)|l_{ij}^1|
+\sum\limits_{m=2}^4\widehat{\vec{\mathcal{F}}}_{\vec{n}_{ij}^m}\big(\vec{U}_{1,ij}^{\alpha,1},
(\vec{U}_{m,ij}^{\alpha})^{-}\big)|l_{ij}^m|\Big),\\
\mathcal{H}_2^{\alpha}&=\vec{U}_{2,ij}^{3,\alpha}J_{ij}^{3,\alpha}-\frac{4\Delta t^n}{\omega_1}
\left(\widehat{\vec{\mathcal{F}}}_{\vec{n}_{ij}^2}\big(\vec{U}_{2,ij}^{3,\alpha},(\vec{U}_{2,ij}^{\alpha})^{+}\big)
-\widehat{\vec{\mathcal{F}}}_{\vec{n}_{ij}^2}(\vec{U}_{1,ij}^{\alpha,1},\vec{U}_{2,ij}^{3,\alpha})\right)|l_{ij}^2|,\\
\mathcal{H}_3^{\alpha}&=\vec{U}_{3,ij}^{\alpha,3}J_{ij}^{\alpha,3}-\frac{4\Delta t^n}{\omega_1}
\left(\widehat{\vec{\mathcal{F}}}_{\vec{n}_{ij}^3}\big(\vec{U}_{3,\alpha}^{\alpha,3},(\vec{U}_{3,ij}^{\alpha})^{+}\big)
-\widehat{\vec{\mathcal{F}}}_{\vec{n}_{ij}^3}(\vec{U}_{1,ij}^{\alpha,1},\vec{U}_{3,ij}^{\alpha,3})\right)|l_{ij}^3|,\\
\mathcal{H}_4^{\alpha}&=\vec{U}_{4,ij}^{1,\alpha}J_{ij}^{1,\alpha}-\frac{4\Delta t^n}{\omega_1}
\left(\widehat{\vec{\mathcal{F}}}_{\vec{n}_{ij}^4}\big(\vec{U}_{4,ij}^{1,\alpha},(\vec{U}_{4,ij}^{\alpha})^{+}\big)
-\widehat{\vec{\mathcal{F}}}_{\vec{n}_{ij}^4}(\vec{U}_{1,ij}^{\alpha,1},\vec{U}_{4,ij}^{1,\alpha})\right)|l_{ij}^4|.
\end{aligned}
\end{equation}
It is obvious that the equation of $\mathcal{H}_1^{\alpha}$
has the same type as the 2D first-order scheme \eqref{eq73}, $\alpha=1,2,3$,
while the equation of $\mathcal{H}_m^{\alpha}$
is similar to the 1D first-order scheme \eqref{eq8} with \eqref{eq9},  $\alpha=1,2,3,m=2,3,4$.
Meanwhile,  $\overline{\vec{U}}^{n+1}$
is a convex combination of $\vec{U}_{m,ij}^{\ast\ast}$ and $\mathcal{H}_m^{\alpha}$, $\alpha=1,2,3, m=1,\cdots,4$.
Thus, if those terms are PCP, then
$\overline{\vec{U}}^{n+1}\in \mathcal{G}$ due to the convexity of the admissible state set $\mathcal G$.

\begin{theorem}
If $\overline{\vec{U}}_{ij}^{n}, \vec{U}_{m,ij}^{\ast\ast}\in \mathcal{G}$ for all $m,i,j$,
and the HLLC wave speeds are estimated in \eqref{eq30},
then    the high-order finite volume Lagrangian scheme \eqref{eq80} is PCP, i.e.
$\overline{\vec{U}}_{ij}^{n+1}\in \mathcal{G}$ for all $i=1,\cdots,N_x;j=1,\cdots,N_y$,
under  the following time stepsize restriction
\begin{equation}\label{eq85}
\Delta t^n\le\frac{\omega_1}{4}\lambda\min\limits_{i,j,\alpha}\left\{\frac{J_{ij}}{\sum\limits_{m=1}^4|l_{ij}^m|}
/\max\limits_{\vec{W}_{ij}^{\alpha}\in\mathcal{P}}\big(|s_{\min}(\vec{W}_{ij}^{\alpha})|,|s_{\max}(\vec{W}_{ij}^{\alpha})|\big)\right\},
\end{equation}
where the CFL number $\lambda\le\frac{1}{2}$,
$J_{ij}=\min\limits_{\alpha=1,\cdots,3}\{J_{ij}^{\alpha,1},
J_{ij}^{3,\alpha},J_{ij}^{\alpha,3},J_{ij}^{1,\alpha}\}$,
and
$$\mathcal{P}=\{\vec{U}_{1,ij}^{\alpha,1},\vec{U}_{2,ij}^{3,\alpha},\vec{U}_{3,ij}^{\alpha,3},\vec{U}_{4,ij}^{1,\alpha}\}.$$
\end{theorem}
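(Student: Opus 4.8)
The plan is to build on the decomposition \eqref{eq83}, which expresses $\overline{\vec{U}}_{ij}^{n+1}A_{ij}^{n+1}$ as a linear combination of the starred interior values $\vec{U}_{m,ij}^{\ast\ast}$ ($m=1,\dots,4$) and the auxiliary states $\mathcal{H}_m^{\alpha}$ ($m=1,\dots,4$, $\alpha=1,2,3$) defined in \eqref{eq84}, with coefficients $\tfrac14\omega_{\ast}^{m}$ and $\tfrac14\omega_1\omega_{\alpha}$. Since $\omega_1=\omega_3=\tfrac16$, $\omega_2=\tfrac23$, and every Jacobian factor $J_{ij}^{\alpha,\beta}$ in \eqref{omega} is strictly positive for a non-degenerate quadrilateral cell, all these coefficients are positive. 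The key structural fact is that $\mathcal{G}$ is an open convex cone: it coincides with $\{D>0\}\cap\{E-\sqrt{D^2+|\vec{m}|^2}>0\}$, and both conditions are positively homogeneous of degree one and convex, so $\mathcal{G}$ is closed under linear combinations with positive coefficients. Hence, dividing \eqref{eq83} by $A_{ij}^{n+1}>0$, it suffices to show that each $\vec{U}_{m,ij}^{\ast\ast}$ and each $\mathcal{H}_m^{\alpha}$ lies in $\mathcal{G}$.

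The first group is immediate: $\vec{U}_{m,ij}^{\ast\ast}\in\mathcal{G}$ by hypothesis, and the reconstructed Gauss--Lobatto point values that feed the $\mathcal{H}_m^{\alpha}$ — the interior traces $\vec{U}_{1,ij}^{\alpha,1},\vec{U}_{2,ij}^{3,\alpha},\vec{U}_{3,ij}^{\alpha,3},\vec{U}_{4,ij}^{1,\alpha}\in\mathcal{P}$ and the exterior traces $(\vec{U}_{m,ij}^{\alpha})^{+}$ — belong to $\mathcal{G}$ because the scaling PCP limiter of Section \ref{subsection2.2.1} has been applied on the set $S_{ij}$ of 2D Gauss--Lobatto points in each cell. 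For the second group I would note that the formula for $\mathcal{H}_1^{\alpha}$ in \eqref{eq84} has exactly the algebraic form of the first-order 2D update \eqref{eq73} after adding the consistent fluxes $\widehat{\vec{\mathcal{F}}}_{\vec{n}_{ij}^m}(\vec{U}_{1,ij}^{\alpha,1},\vec{U}_{1,ij}^{\alpha,1})$ back in (using $\sum_{m}\vec{n}_{ij}^m|l_{ij}^m|=\vec{0}$), with the cell area $A_{ij}^n$ replaced by $J_{ij}^{\alpha,1}$, the cell state replaced by $\vec{U}_{1,ij}^{\alpha,1}$, and the time step replaced by $4\Delta t^n/\omega_1$. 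Thus the first-order 2D PCP theorem (whose proof rests on Lemma \ref{lem4}) applies provided the effective CFL number
\[
\frac{4\Delta t^n}{\omega_1}\,\frac{\sum_{m=1}^4|l_{ij}^m|}{J_{ij}^{\alpha,1}}\,\max\!\big(|s_{\min}|,|s_{\max}|\big)\le\lambda\le\tfrac12 ,
\]
which is precisely what \eqref{eq85} guarantees once one bounds $J_{ij}^{\alpha,1}$ from below by $J_{ij}=\min_{\alpha}\{J_{ij}^{\alpha,1},J_{ij}^{3,\alpha},J_{ij}^{\alpha,3},J_{ij}^{1,\alpha}\}$ and maximizes the wave speed over $\mathcal{P}$. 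This yields $\mathcal{H}_1^{\alpha}\in\mathcal{G}$.

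In the same way, each $\mathcal{H}_m^{\alpha}$ with $m=2,3,4$ has the algebraic form of the first-order 1D scheme \eqref{eq8}--\eqref{eq9} along the normal $\vec{n}_{ij}^m$: the two HLLC flux evaluations in \eqref{eq84} telescope exactly as in one dimension, with ``left'' state $\vec{U}_{1,ij}^{\alpha,1}$, ``right'' state the exterior trace $(\vec{U}_{m,ij}^{\alpha})^{+}$, effective width the appropriate Jacobian, and effective time step $4\Delta t^n|l_{ij}^m|/\omega_1$. The PCP property of the intermediate HLLC states (Lemma \ref{lem4} and the 2D analogue of Theorem \ref{theorem2.2}), together with the restriction \eqref{eq85}, then gives $\mathcal{H}_m^{\alpha}\in\mathcal{G}$. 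Collecting all pieces and invoking the convex-cone closure of $\mathcal{G}$ completes the argument. I expect the main obstacle to be the bookkeeping in the last two paragraphs: one must verify that the ``add and subtract the consistent flux'' manipulation that produced \eqref{eq83}--\eqref{eq84} genuinely recasts each $\mathcal{H}_m^{\alpha}$ as a bona fide first-order Godunov-type update on an auxiliary control volume — in particular that the mixed flux pairs $(\vec{U}_{1,ij}^{\alpha,1},\vec{U}_{m,ij}^{\cdot})$ together with $\sum_m\vec{n}_{ij}^m|l_{ij}^m|=\vec{0}$ make the hypotheses of the first-order PCP theorems hold verbatim, and that the constant $\omega_1/4$ in \eqref{eq85} is exactly the one that brings every effective CFL number below $1/2$.
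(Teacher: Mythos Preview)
Your proposal is correct and follows essentially the same approach as the paper: the paper's argument (given in the text immediately preceding the theorem) likewise rests on the decomposition \eqref{eq83}, identifies $\mathcal{H}_1^{\alpha}$ with a first-order 2D update of the form \eqref{eq73} and $\mathcal{H}_m^{\alpha}$ for $m=2,3,4$ with first-order 1D updates of the form \eqref{eq8}--\eqref{eq9}, and then concludes by convexity of $\mathcal{G}$. Your write-up is in fact more careful than the paper's terse sketch, particularly in making explicit the convex-cone structure of $\mathcal{G}$ and the role of the PCP limiter in ensuring that all reconstructed point values (not only $\vec{U}_{m,ij}^{\ast\ast}$) lie in $\mathcal{G}$.
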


Before ending this section,
we discuss  how to
 to limit  $\vec{U}_{m,ij}^{\ast\ast}$ defined in
\eqref{ustar} and $(\vec{U}_{m,ij}^{\alpha})^{-}$ reconstructed by the WENO technique
 such that  the limited values $\widetilde{\vec{U}}_{m,ij}^{\ast\ast}$ and  $(\widetilde{\vec{U}}_{m,ij}^{\alpha})^{-}$ belong to $\mathcal G$ when  $\overline{\vec{U}}_{ij}^{n}\in \mathcal{G}$.
For the sake of brevity, the superscript $n$ will be omitted in this section and a small parameter
 $\varepsilon$ is taken as $10^{-13}$.
Similar to the one-dimensional case, the scaling PCP limiter can be implemented as follows.

First,  enforce the positivity of the mass density. For each cell $I_{ij}$, define
\begin{equation*}
\theta_{m,ij}^1=\min\left\{1,\frac{\overline{D}_{ij}-\varepsilon}{\overline{D}_{ij}
-D_{\min}^{m}}\right\},
~~~D_{\min}^{m}=\min\limits_{\alpha}\left\{(D_{m,ij}^{\alpha})^{-},D_{m,ij}^{\ast\ast}\right\},
\end{equation*}
and limit
\begin{equation*}
(\widehat{D}_{m,ij}^{\alpha})^{-}=\overline{D}_{ij}+\theta_{m,ij}^{1}\big((D_{m,ij}^{\alpha})^{-}-\overline{D}_{ij}\big),
~\widehat{D}_{m,ij}^{\ast\ast}=\overline{D}_{ij}+\theta_{m,ij}^{1}\big({D}_{m,ij}^{\ast\ast}-\overline{D}_{ij}\big).
\end{equation*}
Define
$(\widehat{\vec{U}}_{m,ij}^{\alpha})^{-}=((\widehat{D}_{m,ij}^{\alpha})^{-},(\vec{m}_{m,ij}^{\alpha})^{-},(E_{m,ij}^{\alpha})^{-})^{T}$.

Next, enforce the positivity of the term $q(\vec{U})=E-\sqrt{D^2+|\vec{m}|^2}$.
For each cell $I_{ij}$, compute
\begin{equation*}
\theta_{m,ij}^2=\min\left\{1,\frac{q_m(\overline{\vec{U}}_{ij})-\varepsilon}{q_m(\overline{\vec{U}}_{ij})
-q_{\min}^m}\right\},~~
q_{\min}^m=\min\limits_{\alpha}\big\{q_m(\widehat{\vec{U}}_{ij}^{\ast\ast}),
q_m((\widehat{\vec{U}}_{m,ij}^{\alpha})^{-})\big\},
\end{equation*}
and then  limit the point values
\begin{equation*}
(\widetilde{\vec{U}}_{m,ij}^{\alpha})^{-}=\overline{\vec{U}}_{ij}+\theta_{m,ij}^2
\big((\widehat{\vec{U}}_{m,ij}^{\alpha})^{-}-\overline{\vec{U}}_{ij}\big),
~~\widetilde{\vec{U}}_{m,ij}^{\ast\ast}=\overline{\vec{U}}_{ij}+\theta_{m,ij}^2
\big(\widehat{\vec{U}}_{m,ij}^{\ast\ast}-\overline{\vec{U}}_{ij}\big).
\end{equation*}
It is  easy to show  that all those limited values are  in the admissible state
set $\mathcal{G}$  when  $\overline{\vec{U}}_{ij}^{n}\in \mathcal{G}$.

In order to get a 2D Lagrangian scheme of second order accuracy both in space and time,
we replace the forward Euler time discretization with the second order Runge-Kutta time discretization in the scheme \eqref{eq80},
which can be implemented as follows:

Stage 1:
\begin{equation*}
\begin{aligned}
&\vec{x}_{i+\frac{1}{2},j+\frac{1}{2}}^{(1)}=\vec{x}_{i+\frac{1}{2},j+\frac{1}{2}}^{n}+\Delta t^n\vec{u}_{i+\frac{1}{2},j+\frac{1}{2}}^{n},\\
&\overline{\vec{U}}_{ij}^{(1)}A_{ij}^{(1)}
=\overline{\vec{U}}_{ij}^{n}A_{ij}^{n}-\Delta t^n\mathcal{L}(\overline{\vec{U}}^n;i,j);\\
\end{aligned}
\end{equation*}

Stage 2:
\begin{equation*}
\begin{aligned}
&\vec{x}_{i+\frac{1}{2},j+\frac{1}{2}}^{n+1}=\frac12\vec{x}_{i+\frac{1}{2},j+\frac{1}{2}}^{n}
+\frac{1}{2}\left(\vec{x}_{i+\frac{1}{2},j+\frac{1}{2}}^{(1)}+
\Delta t^n\vec{u}_{i+\frac{1}{2},j+\frac{1}{2}}^{(1)}\right),\\
&\overline{\vec{U}}_{ij}^{n+1}A_{ij}^{n+1}
=\frac{1}{2}\overline{\vec{U}}_{ij}^{n}A_{ij}^{n}+\frac{1}{2}\left(\overline{\vec{U}}_{ij}^{(1)}A_{ij}^{(1)}-\Delta t^n\mathcal{L}(\overline{\vec{U}}^{(1)};i,j)\right);\\
\end{aligned}
\end{equation*}
where $\mathcal{L}(\overline{\vec{U}};i,j)=\sum\limits_{m=1}^{4}
\sum\limits_{\alpha=1}^{3}\omega_{\alpha}\widehat{\vec{\mathcal{F}}}_{\vec{n}_{ij}^m}\big((\vec{U}_{m,ij}^{\alpha})^{-},
(\vec{U}_{m,ij}^{\alpha})^{+}\big)|l_{ij}^m|$. Here, the velocity of the vertex $\vec{u}^{n}_{i+\frac{1}{2},j+\frac{1}{2}}$
can be computed by
\begin{equation*}
\vec{u}^{n}_{i+\frac{1}{2},j+\frac{1}{2}}=\frac{1}{4}\sum\limits_{k=1}^{4}\vec{u}^{\ast,k}_{i+\frac{1}{2},j+\frac{1}{2}},
\end{equation*}
where $\vec{u}^{\ast,k}_{i+\frac{1}{2},j+\frac{1}{2}},k=1,\cdots,4$
are fluid velocities at nodes of four cell edges
sharing the common vertex $\vec{x}_{i+\frac{1}{2},j+\frac{1}{2}}^n$ (e.g. the point P in Figure \ref{cell-2d}), respectively.
Here take the calculation of $\vec u^{\ast,1}_{i+\frac{1}{2},j+\frac{1}{2}}$ as an example.
The velocity $\vec u_{i+\frac{1}{2},j+\frac{1}{2}}^{\ast,1}$ is gotten by using the local rotation transformation
of $(u_n,u_\tau)_{i+\frac{1}{2},j+\frac{1}{2}}^{\ast,1}$, where
$(u_n)_{i+\frac{1}{2},j+\frac{1}{2}}^{\ast,1}=s_{i+\frac{1}{2},j+\frac{1}{2}}^{\ast,1}$,
$(u_\tau)_{i+\frac{1}{2},j+\frac{1}{2}}^{\ast,1}=\frac{1}{2}
\left( (u_\tau^{n})_{i+\frac{1}{2},j+\frac{1}{2}}^{-,1}+(u_\tau^n)_{i+\frac{1}{2},j+\frac{1}{2}}^{+,1}\right)$,
and $s_{i+\frac{1}{2},j+\frac{1}{2}}^{\ast,1}$ is the
speed of contact discontinuity in the HLLC solver. And the computation of $\vec{u}^{(1)}_{i+\frac{1}{2},j+\frac{1}{2}}$
can be done by the similar way.

\section{Numerical results}\label{section:NumResult}
This section conducts some numerical experiments on several ultra-relativistic RHD problems with large Lorentz factor, or strong discontinuities, or low rest-mass density or pressure etc. to verify the accuracy, robustness and effectiveness of the studied PCP Lagrangian schemes.
It is worth stressing that those ultra-relativistic RHD problems seriously challenge the numerical schemes.
Unless otherwise stated, all the computations are restricted to the equation of state \eqref{eq55} with the adiabatic index $\Gamma=1.4$, and
the time step size $\Delta t$ of the  1D (resp. 2D)  first-order schemes is determined by \eqref{eq:dt1} (resp. \eqref{eq:dt2}), while it is decided by  \eqref{eq56} (resp. \eqref{eq85}) for the 1D (resp. 2D) high order schemes, where  the value of the parameter $\lambda$ is
 uniformly taken as $1/2$.

\subsection{1D case}

\begin{example}[Accuracy test]\label{example1d01}
It is to  test the accuracy and  PCP property of our schemes.
The initial density and pressure are given by
\begin{align*}
  &\rho(x,0)=\rho_{\text{ref}}+\alpha f(x), \quad p(x,0)=K\rho^\Gamma,\ \
  f(x)=\begin{cases}
    (x^2/L^2-1)^4, &  |x|<L,\\
    0,             &  |x|\geq L,
  \end{cases}
\end{align*}
where $\rho_{\text{ref}}=10^{-7}, K=10^{-1}, L=0.3, \alpha=1$, and $\Gamma=5/3$.
The initial velocity $u$ is specified by assuming that the Riemann
invariant
\begin{align*}
  J_-=\dfrac{1}{2}\ln\left(\dfrac{1+u}{1-u}\right)-\dfrac{1}{\sqrt{\Gamma-1}}\ln\left(\dfrac{\sqrt{\Gamma-1}+c_s}{\sqrt{\Gamma-1}-c_s}\right)
\end{align*}
is constant. It describes an isentropic pulse moving in a smooth domain, similar to
one  in \cite{zw}.  The computational domain is taken as $[-0.35,1]$, and the exact solution  can
be obtained by the method of characteristics.

 The errors $\varepsilon_{1}$, $\varepsilon_{2}$ and $\varepsilon_{\infty}$ and corresponding orders of convergence  at $t=0.02$  obtained by the first- and  third-order Lagrangian schemes are shown in Tables
\ref{tab:acc1D} and \ref{tab:acc1D-b},
where the errors are defined by
\begin{align*}
  &\varepsilon_{1} :=\int_{\Omega(t)}||\mathbf{U} -\mathbf{U}_h||_{1}~ dV,  \
  \varepsilon_{2} :=\sqrt{\int_{\Omega(t)}||\mathbf{U} -\mathbf{U}_h||_{2}^2~ dV},\
 \varepsilon_{\infty} :=\max_{\Omega(t)}||\mathbf{U} -\mathbf{U}_h||_{\infty},
\end{align*}
here $\mathbf{U} $ and $\mathbf{U}_h$ are the exact  and   numerical
solutions at $t$, respectively.  Table \ref{tab:acc1D-b} also
lists the proportions of the PCP limited cells at all time levels, denoted by $\Theta_N$.
 It is shown that the PCP limiter has been performed in
the higher-order accurate schemes because of the low pressure, and
 the higher-order PCP Lagrangian schemes  can achieve the theoretical accuracy.

\end{example}

\begin{table}[h!]
  \centering
  \begin{tabular}{r||c|c||c|c||c|c}
    \hline $N$ & $\varepsilon_{1}$ & Order & $\varepsilon_{2}$ & Order
    & $\varepsilon_{\infty}$ & Order   \\
    \hline
     20 & 1.687e-02 &  -   & 1.729e-02 &  -   & 3.350e-02 &  -      \\
     40 & 8.356e-03 & 1.01 & 8.523e-03 & 1.02 & 1.698e-02 & 0.98    \\
     80 & 4.260e-03 & 0.97 & 4.288e-03 & 0.99 & 8.418e-03 & 1.01   \\
    160 & 2.171e-03 & 0.97 & 2.169e-03 & 0.98 & 4.272e-03 & 0.98    \\
    320 & 1.098e-03 & 0.98 & 1.092e-03 & 0.99 & 2.139e-03 & 1.00   \\
    \hline
  \end{tabular}
  \caption{Example \ref{example1d01}: Errors and orders of convergence $t=0.02$ obtained by using
  the first-order scheme.}
  \label{tab:acc1D}
\end{table}

\begin{table}[h!]
  \centering
  \begin{tabular}{r||c|c||c|c||c|c||c}
    \hline $N$ & $\varepsilon_{1}$ & Order & $\varepsilon_{2}$ & Order
    & $\varepsilon_{\infty}$ & Order &  $\Theta_N$ \\
    \hline
    20 & 9.067e-02 &  -   & 1.038e-01 &  -   & 2.317e-01 &  -  & 25.0\% \\
    40 & 1.785e-02 & 2.34 & 2.289e-02 & 2.18 & 6.573e-02 & 1.82 & 20.8\% \\
    80 & 3.373e-03 & 2.40 & 4.356e-03 & 2.39 & 1.368e-02 & 2.26 & 12.8\% \\
    160 & 3.776e-04 & 3.16 & 4.932e-04 & 3.14 & 1.695e-03 & 3.01 & 6.49\% \\
    320 & 3.306e-05 & 3.51 & 4.151e-05 & 3.57 & 1.715e-04 & 3.31 & 3.12\% \\
    \hline
  \end{tabular}
  \caption{Same as Table \ref{tab:acc1D}, except for the third-order scheme.}
  \label{tab:acc1D-b}
\end{table}

\begin{example}[Blast wave interaction]\label{example1d02}
This is an initial-boundary-value problem for the 1D RHD equations  and has been studied in \cite{marti3,wu2015,yz1}. The same initial setup is considered here.
The computational domain is $[0,1]$ with outflow boundary conditions, and the initial condition is
\begin{equation}
  (\rho,u,p)=\begin{cases}
    (1,0,1000),    &0<x<0.1,\\
    (1,0,0.01),    & 0.1<x<0.9,\\
    (1,0,100),     &0.9<x<1.
  \end{cases}
\end{equation}
It is a severe test because the interaction is happened in a very narrow
region and there meet the low density and pressure, and large velocity in the domain.
If the PCP limiter is not employed, then the calculation of the second- or third-order schemes
result in failure as soon as  the computed pressure or density  becomes negative.
Figure \ref{fig:blast_wave} shows the numerical results at $t=0.43$
obtained by using the second- and third-order schemes.
It can be found that the solutions within the interval $[0.5,0.53]$ obtained by using the third-order scheme are in good agreement with the exact solution, and
the discontinuities are exactly and well captured on a coarse mesh with $400$ cells.
\end{example}
\begin{figure}[h!]
  \centering
  \subfigure[$\rho$]{
    \includegraphics[width=0.3\textwidth, trim=40 20 50 0, clip]{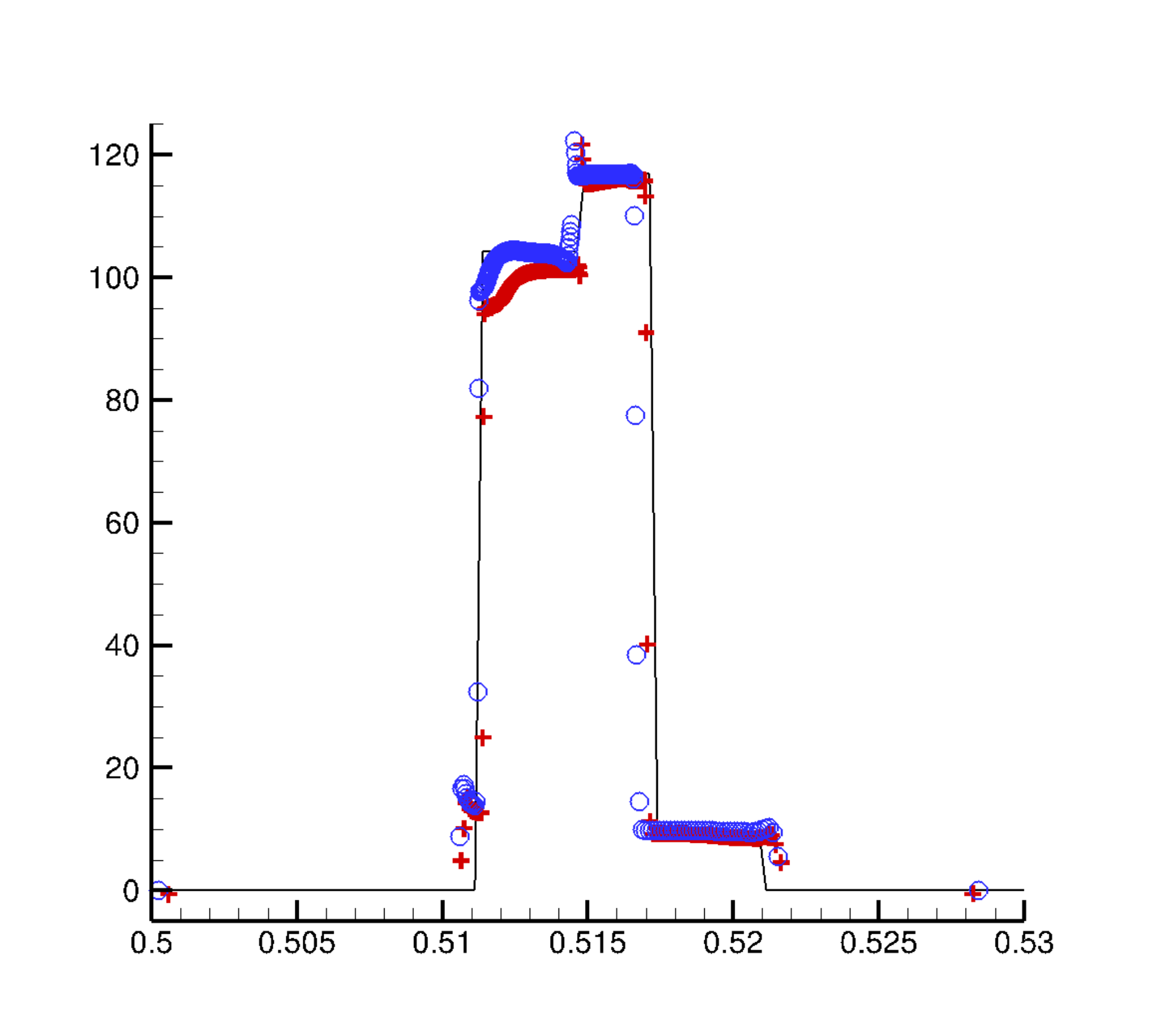}
  }
  \subfigure[$u$]{
    \includegraphics[width=0.3\textwidth, trim=40 20 50 0, clip]{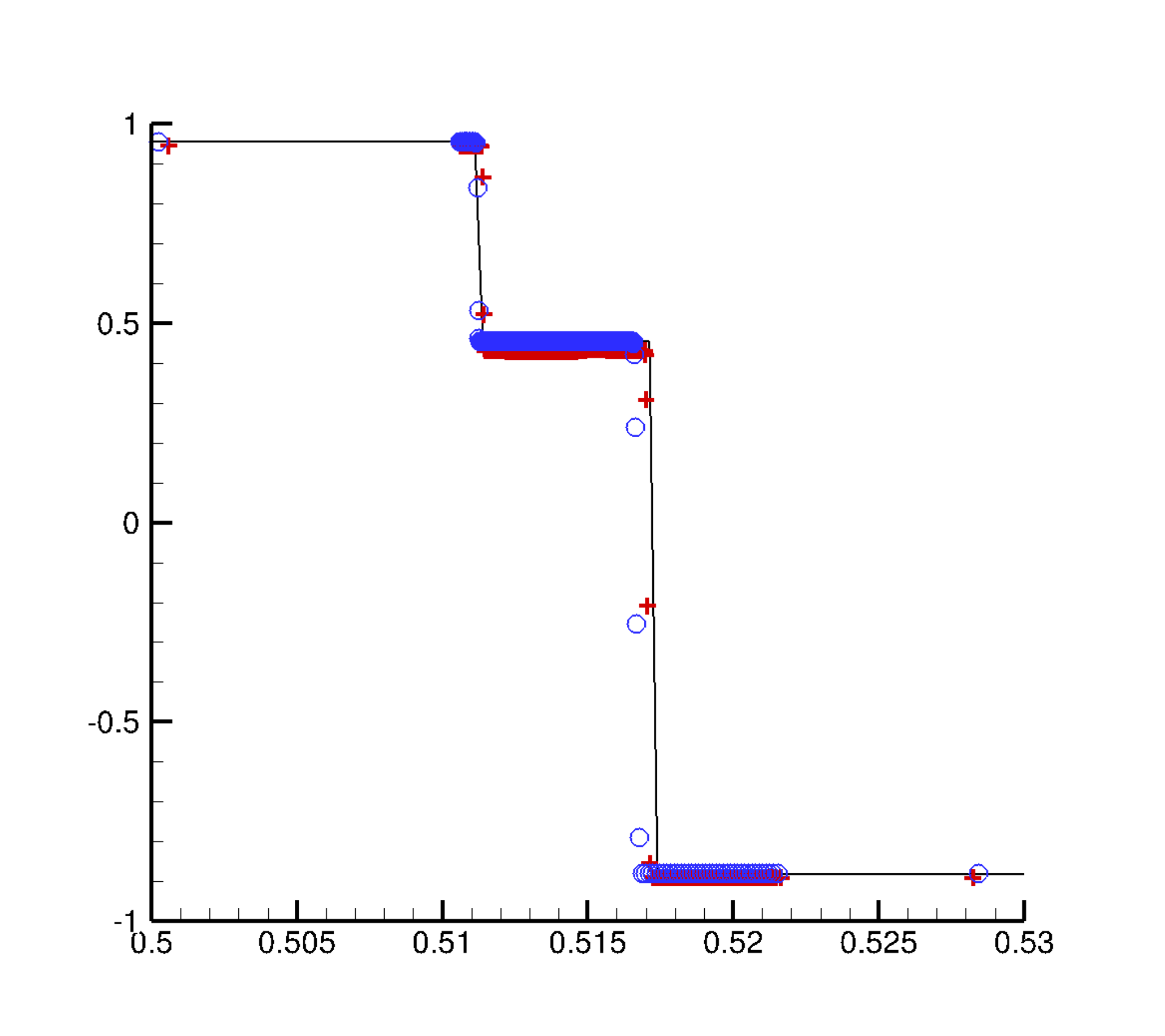}
  }
  \subfigure[$p$]{
    \includegraphics[width=0.3\textwidth, trim=40 20 50 0, clip]{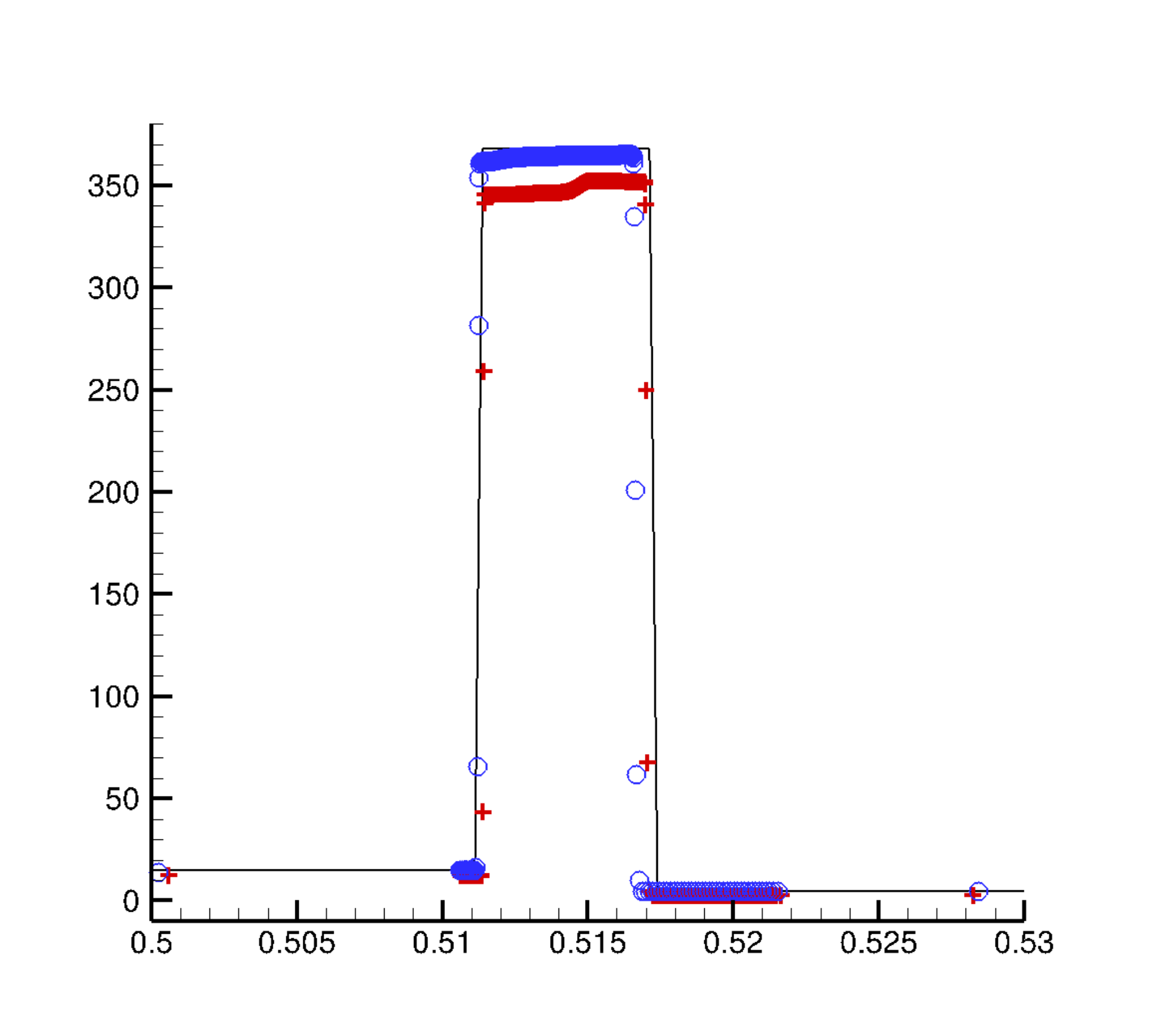}
  }
  \caption{Example \ref{example1d02}: Close-up of the solutions at $t=0.43$.
  The solid lines are the exact solutions, while the symbols ``$+$'' and  ``$\circ$'' denote the solutions obtained by using the second- and third-order Lagrangian schemes, respectively.}
  \label{fig:blast_wave}
\end{figure}

\subsection{2D case}

\begin{example}[Accuracy test]\label{example1}
Similar to \cite{shu},  a 2D relativistic isentropic vortex problem
is constructed here to test the accuracy of our Lagrangian schemes.
First, in a coordinate system $S$ with the spacetime
coordinates $(t,x,y)$,  a steady, relativistic isentropic vortex solution
$(\rho,u_x,u_y,p)$ of the 2D RHD equations is obtained as follows
\begin{align*}
  &\rho=(1-\alpha e^{1-r^2})^{\frac{1}{\Gamma-1}},\quad p=\rho^\Gamma,\quad
  (u_x,u_y)=(-{y},{x})f,\\
  &r=\sqrt{{x}^2+{y}^2}, \quad \alpha=\dfrac{(\Gamma-1)/\Gamma}{8\pi^2}   \epsilon^2,\\
  &\beta=\dfrac{\Gamma^2\alpha e^{1-r^2}}{2\Gamma-1-\Gamma\alpha e^{1-r^2}},\quad
  f=\sqrt{\dfrac{\beta}{1+\beta r^2}},
\end{align*}
where the vortex strength is $\epsilon=10.0828$ such that the
lowest density and lowest pressure  are $7.8\times 10^{-15}$ and
$1.78\times 10^{-20}$, respectively.
Next,  assume that a coordinate system $S'$ with the spacetime coordinates $(t',x',y')$
is in motion relative to the coordinate system $S$ with a constant velocity of magnitude
$w$ along the $(1,1)$ direction, from the perspective of an observer stationary
in $S$. Then the relation between the two coordinate systems is given by the Lorentz
transformation as
\begin{align*}
  &\gamma = \dfrac{1}{\sqrt{1-w^2}},\quad t=\gamma\big(t'+\dfrac{w}{\sqrt{2}}(x'+y')\big),\\
  &x=x'+\dfrac{\gamma-1}{2}(x'+y')+\dfrac{\gamma t'w}{\sqrt{2}},
  \quad y=y'+\dfrac{\gamma-1}{2}(x'+y')+\dfrac{\gamma t'w}{\sqrt{2}},
\end{align*}
and the transformation between the velocities is
\begin{align*}
u'&=\dfrac{1}{1-\frac{w(u_x+u_y)}{\sqrt{2}}}\left[\dfrac{u_x}{\gamma}-\dfrac{w}{\sqrt{2}}+\dfrac{\gamma
  w^2}{2(\gamma+1)}(u_x+u_y)\right],\\
v'&=\dfrac{1}{1-\frac{w(u_x+u_y)}{\sqrt{2}}}\left[\dfrac{u_y}{\gamma}-\dfrac{w}{\sqrt{2}}+\dfrac{\gamma
  w^2}{2(\gamma+1)}(u_x+u_y)\right].
\end{align*}
Using those transformations can give  a time-dependent solution
$(\rho',u',v',p')$ in the coordinate system $S'$
\begin{align*}
  &\rho'(x',y',t')=\rho(x(x',y',t'),y(x',y',t')),\\
  &p'(x',y',t')=p(x(x',y',t'),y(x',y',t')),\\
  &u_x'(x',y',t')=u_x', \quad u_y'(x',y',t')=u_y'.
\end{align*}
The vortex in the coordinate system $S'$ moves with a constant speed of magnitude $w$ in $(-1,-1)$ direction.
Unlike the non-relativistic case, the relativistic circular vortex is contracted in $(1,1)$
direction due to the Lorentz contraction, thus it becomes elliptic in the coordinate system $S'$.

Our numerical simulation is performed in an initial square $\Omega(0)=[-5,5]^2$
with $w=0.5$ and periodic boundary conditions, and the output time is $t=1$.
For such problem, the second order Lagrangian
scheme without the PCP limiter will break down because the
numerical solution cannot be guaranteed to be in the admissible state set during the
computation.
Tables \ref{tab:acc} and  \ref{tab:acc-b}
list the errors $\varepsilon_{1}$, $\varepsilon_{2}$ and $\varepsilon_{\infty}$ and
orders of convergence. Moreover, Table \ref{tab:acc-b} also
gives  the proportions of the PCP limited cells at all time levels, denoted by $\Theta_N$.
 Figure \ref{fig:vortex} plots the deformed mesh and shifted elliptic vortex
 in the primitive variables at $t=1$ obtained by the second-order PCP Lagrangian scheme.
 It is shown that  the first- and second-order  PCP Lagrangian schemes can achieve the expected theoretical accuracy, and the PCP limiter has been performed in the second-order accurate schemes.
\end{example}

\begin{table}[h!]
  \centering
  \begin{tabular}{r||c|c||c|c||c|c}
    \hline $N$ & $\varepsilon_{1}$ & Order & $\varepsilon_{2}$ & Order
    & $\varepsilon_{\infty}$ & Order   \\
    \hline
     20 & 1.287e-01 &  -   & 1.848e-01 &  -   & 9.417e-01 &  -      \\
     40 & 6.317e-02 & 1.03 & 8.755e-02 & 1.08 & 4.139e-01 & 1.19    \\
     80 & 3.148e-02 & 1.00 & 4.282e-02 & 1.03 & 2.056e-01 & 1.01    \\
    160 & 1.587e-02 & 0.99 & 2.139e-02 & 1.00 & 1.009e-01 & 1.03    \\
    320 & 7.946e-03 & 1.00 & 1.065e-02 & 1.01 & 4.918e-02 & 1.04    \\ \hline
  \end{tabular}
  \caption{Example \ref{example1}: Errors and orders of convergence at $t=1$.}
  \label{tab:acc}
\end{table}

\begin{table}[h!]
  \centering
  \begin{tabular}{r||c|c||c|c||c|c||c}
    \hline $N$ & $\varepsilon_{1}$ & Order & $\varepsilon_{2}$ & Order
    & $\varepsilon_{\infty}$ & Order & $\Theta_N$ \\ \hline
    20 & 8.131e-02 & -    & 1.303e-01 & -    & 6.264e-01 & -     &  1.52\%\\
    40  & 2.199e-02 & 1.89 & 3.769e-02 & 1.79 & 2.008e-01 & 1.64  &  0.38\%\\
    80  & 5.458e-03 & 2.01 & 1.008e-02 & 1.90 & 7.842e-02 & 1.36  &  0.19\%\\
    160 & 1.277e-03 & 2.10 & 2.393e-03 & 2.07 & 2.037e-02 & 1.94  &  0.082\%\\
    320 & 3.000e-04 & 2.09 & 5.559e-04 & 2.11 & 4.390e-03 & 2.21  &  0.015\%\\
    \hline
  \end{tabular}
  \caption{Same as Table \ref{tab:acc} except for the second-order scheme.}
  \label{tab:acc-b}
\end{table}

\begin{figure}[h!]
  \centering
  \subfigure[$\rho$]{
    \includegraphics[width=0.45\textwidth, trim=50 50 30 50, clip]{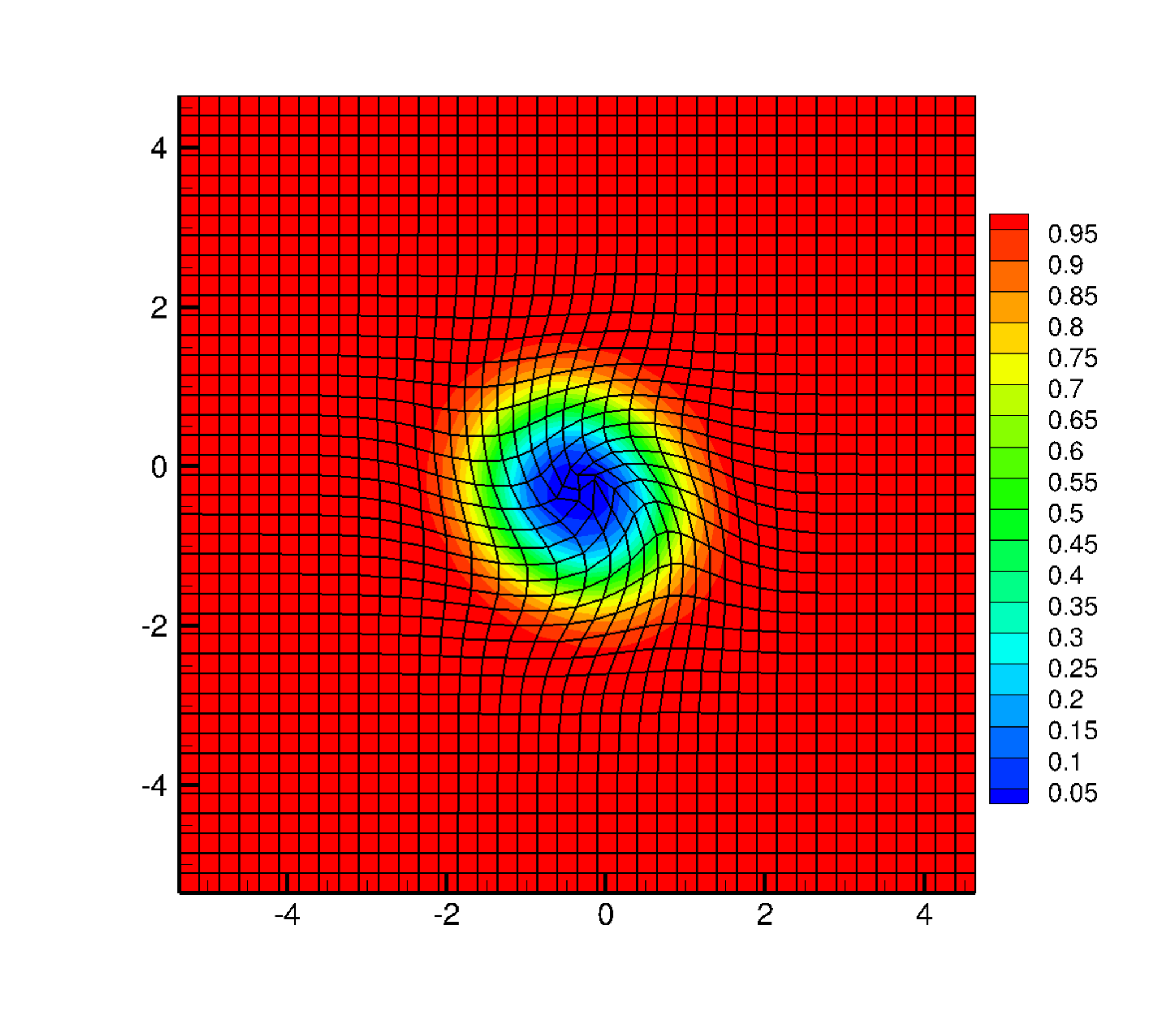}
  }
  \subfigure[$u_x$]{
    \includegraphics[width=0.45\textwidth, trim=50 50 30 50, clip]{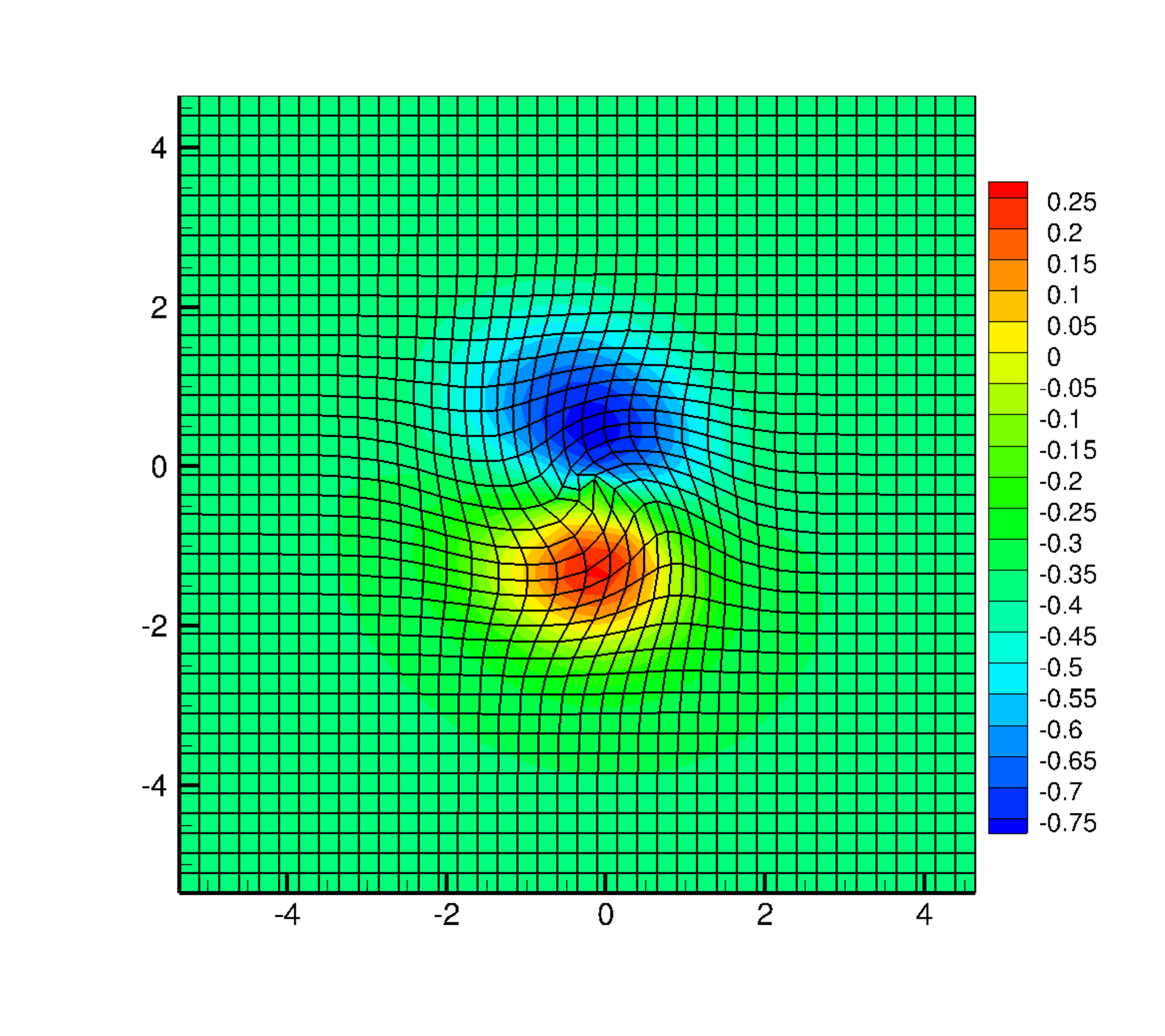}
  }
  \subfigure[$u_y$]{
    \includegraphics[width=0.45\textwidth, trim=50 50 30 50, clip]{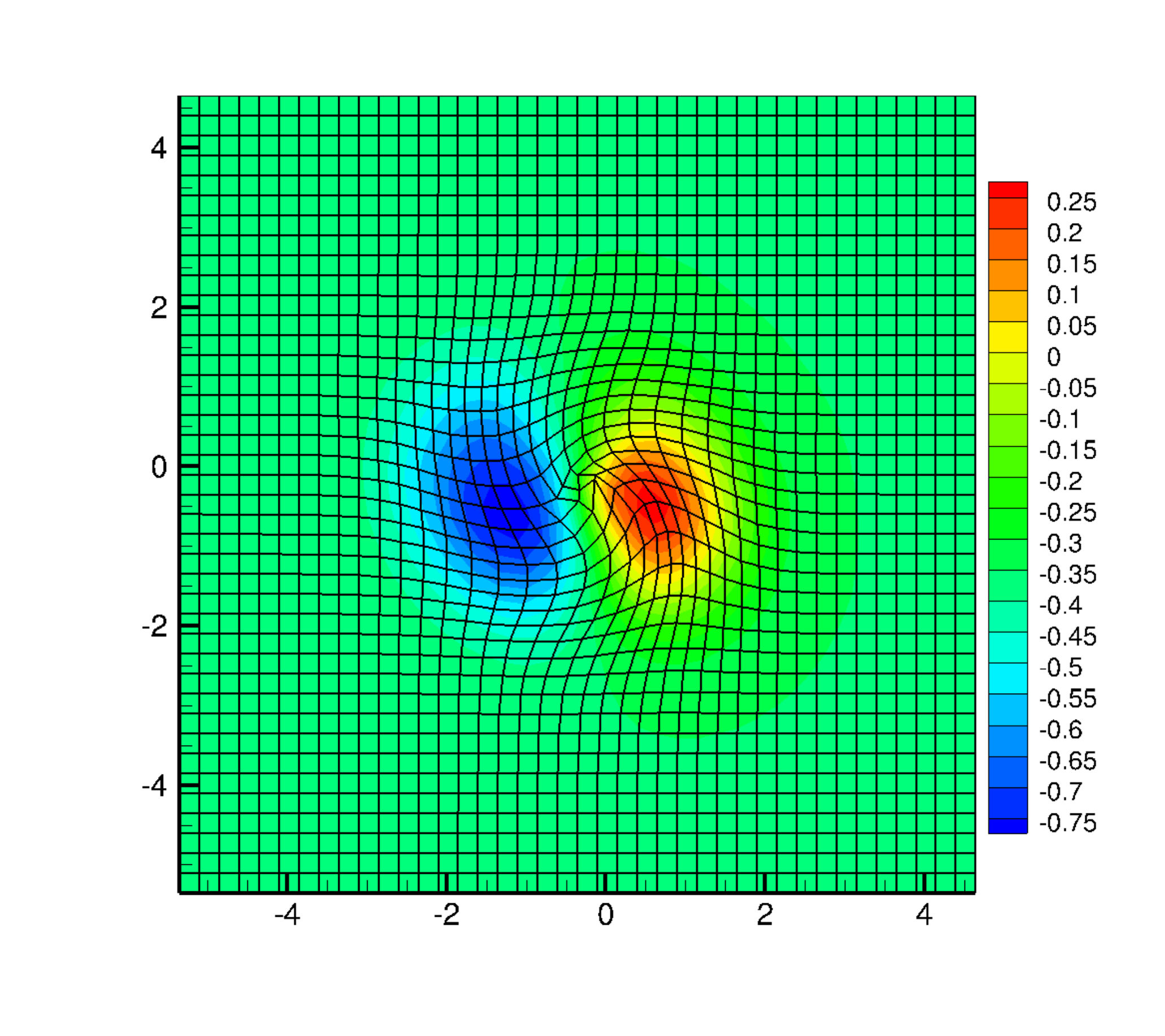}
  }
  \subfigure[$p$]{
    \includegraphics[width=0.45\textwidth, trim=50 50 30 50, clip]{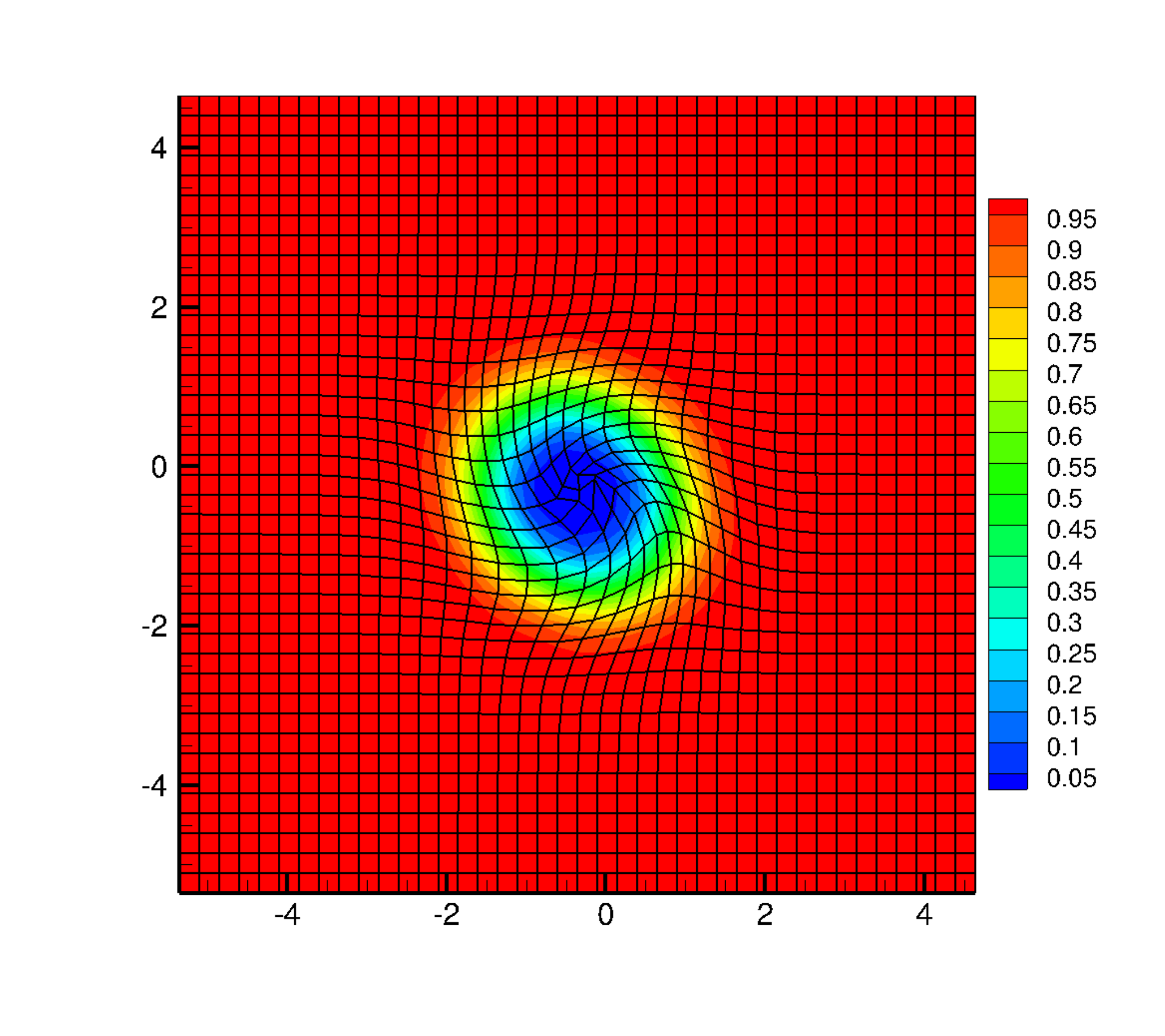}
  }
  \caption{Example \ref{example1}:  Mesh and primitive variables at $t=1$ obtained by using
   the second-order scheme.}
  \label{fig:vortex}
\end{figure}

\begin{example}[Blast problem on the Cartesian mesh]\label{example2}
Consider a blast problem in a square domain $[0,1]^2$ with reflective boundary
conditions.
The initial data are specified as follows
\begin{equation}
  (\rho,u_x,u_y,p)=\begin{cases}
    (10^{-10},0,0,1),    & r<0.5,\\
    (10^{-12},0,0,0.05), & r>0.5,
\end{cases}
\end{equation}
with $r=\sqrt{x^2+y^2}$.
The domain is initially divided into a uniform Cartesian mesh with $60\times 60$ cells.
If the second-order Lagrangian scheme is not  PCP, then  the negative density  may be numerically obtained so that the calculation will result in failure.

Figure \ref{fig:WB_cartesian} plots the  mesh and the density contour at  $t=0.4$
 obtained by using the second-order PCP Lagrangian scheme, and
the density and pressure  along the line $y=x$ obtained by the first-  and second-order PCP Lagrangian schemes.
The solid line in  Figure \ref{fig:WB_cartesian}(c)-(d)
is the reference solution obtained by using a second order TVD
    Eulerian scheme with Lax-Friedrichs flux in
    the cylindrical coordinate and with $10000$ cells, and
    clearly shows that the solution consists of a ``left-moving'' rarefaction wave, a ``right-moving'' contact
discontinuity and a ``right-moving'' shock wave.
It is seen that our PCP Lagrangian  schemes capture the contact discontinuity and the
shock wave with high resolution,  and the second-order scheme gives a better result
than the first-order.
\end{example}
\begin{figure}[h!]
  \centering
  \subfigure[Mesh]{
    \includegraphics[width=0.45\textwidth, trim=40 30 30 40, clip]{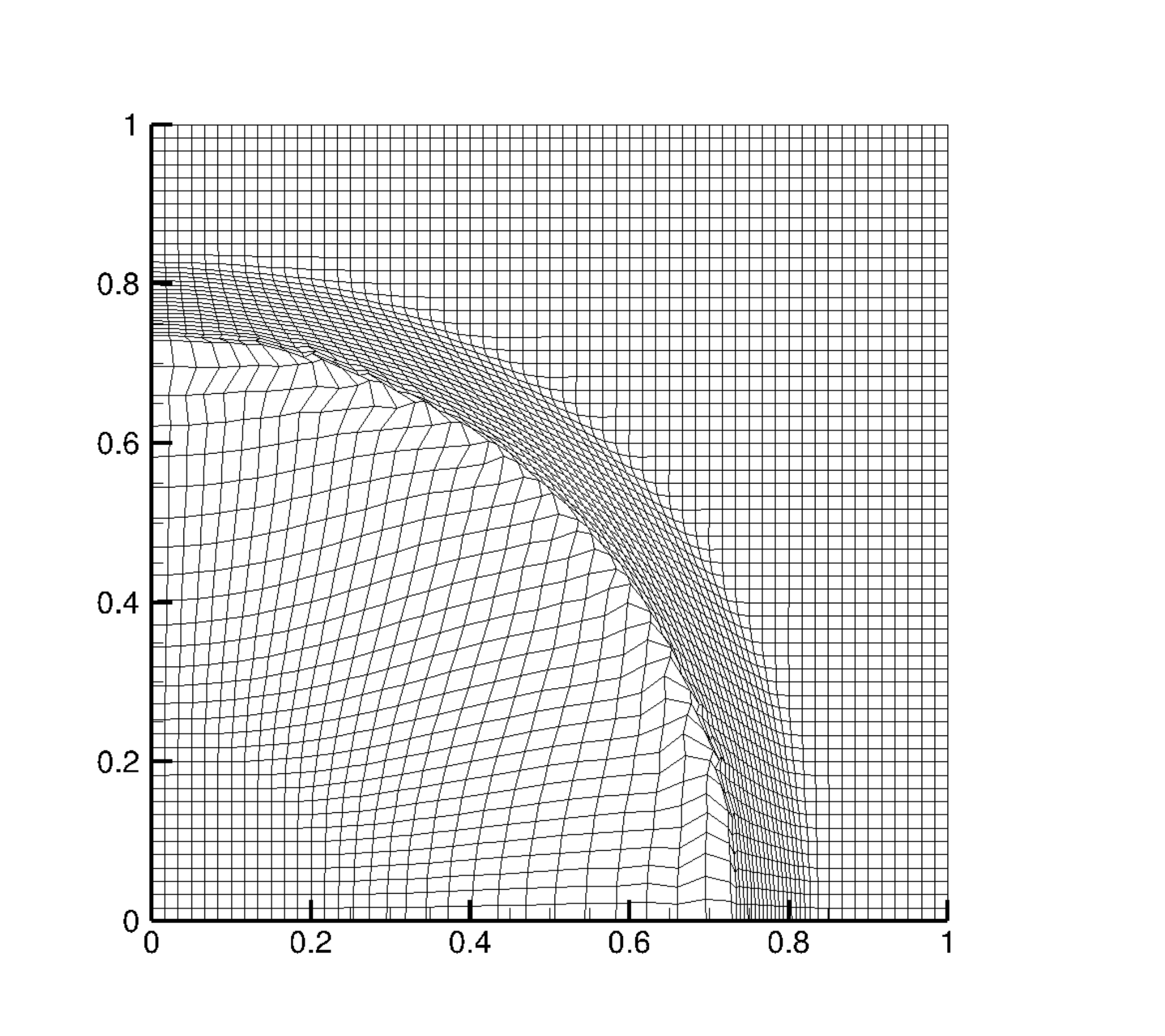}
  }
  \subfigure[$\rho(x,y)$]{
    \includegraphics[width=0.45\textwidth, trim=40 30 30 40, clip]{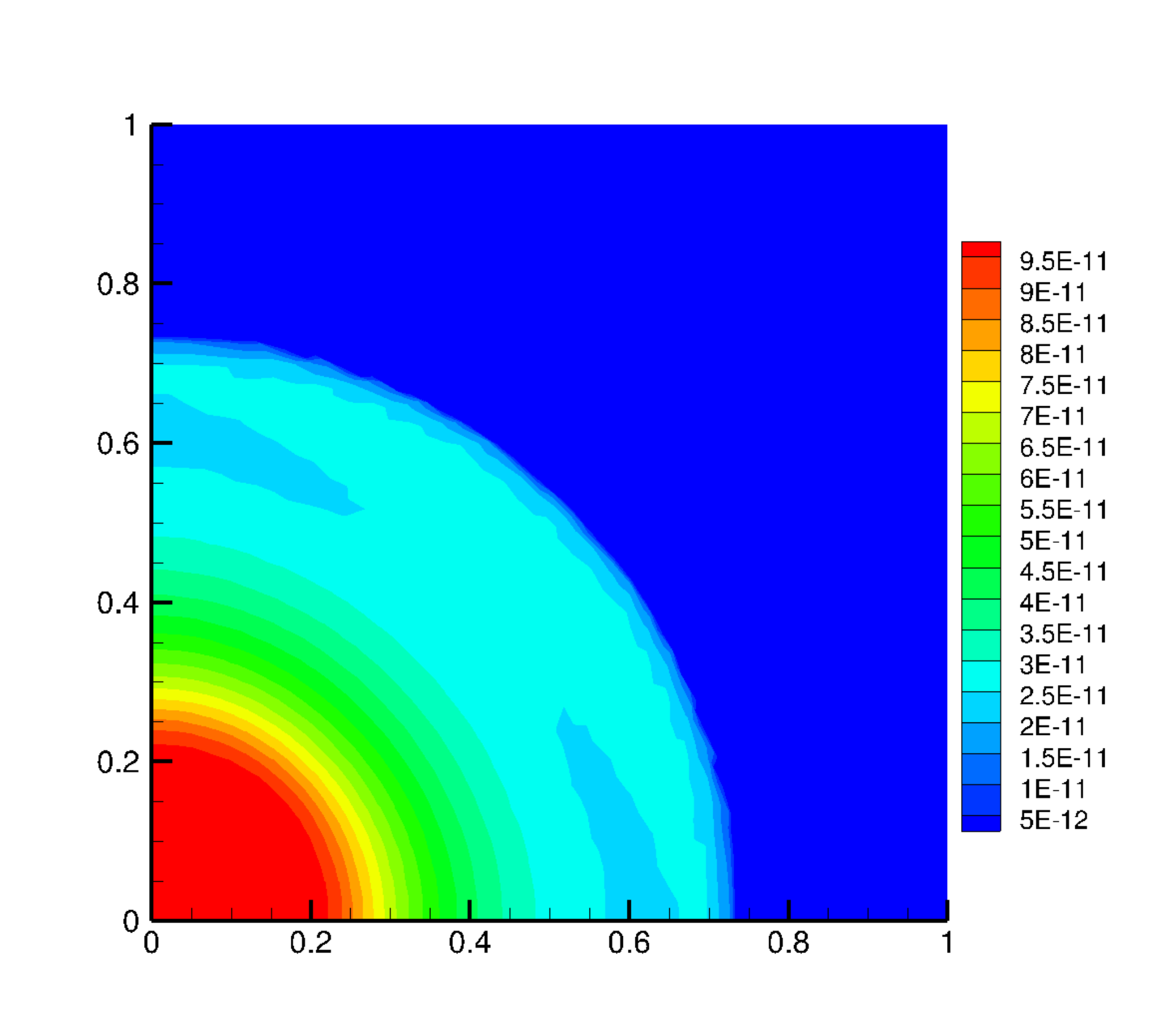}
  }
  \subfigure[$\rho$ along $y=x$]{
    \includegraphics[width=0.45\textwidth, trim=30 30 60 40, clip]{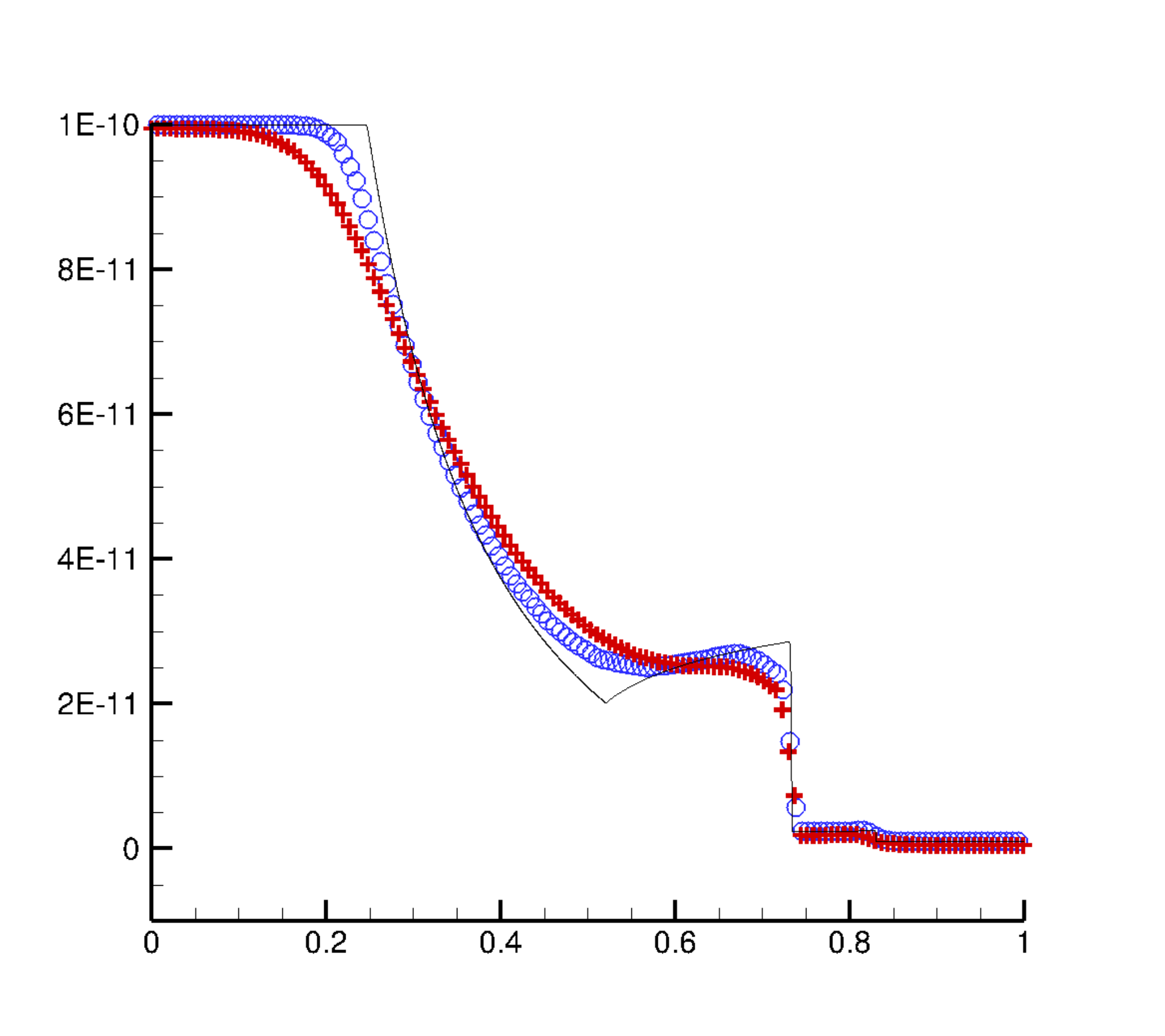}
  }
  \subfigure[$p$ along  $y=x$.]{
    \includegraphics[width=0.45\textwidth, trim=30 30 60 40, clip]{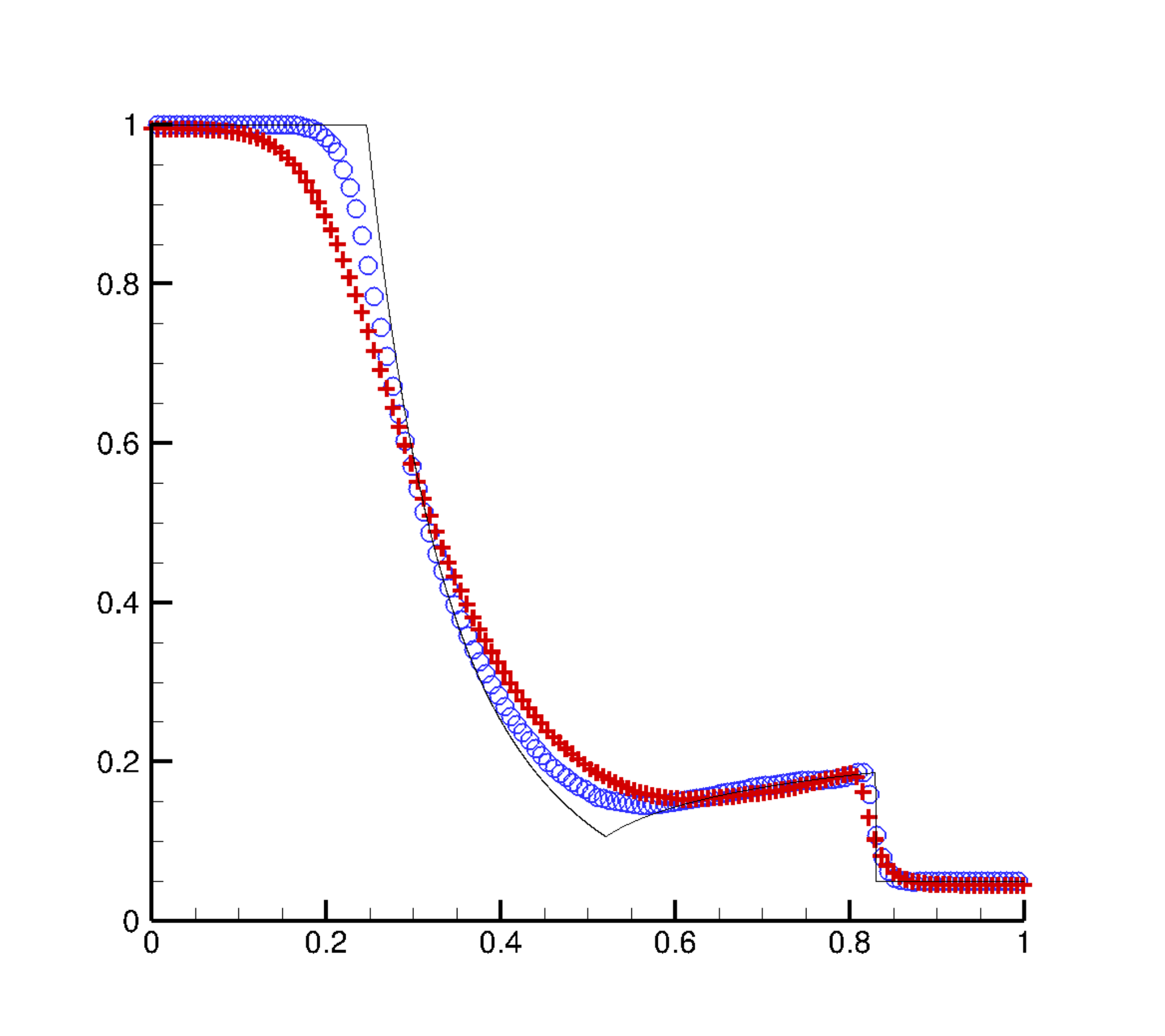}
  }
  \caption{Example \ref{example2}: Mesh and solutions at $t=0.4$.
    The   symbols
    ``$+$'' and ``$\circ$'' denote the solutions obtained by using the first-   and second-order PCP Lagrangian schemes, respectively.}
  \label{fig:WB_cartesian}
\end{figure}

\begin{example}[Blast problem on the polar mesh]\label{example3}
It is to solve the problem in Example \ref{example2} on
an equal-angled polar mesh with $100\times 10$ cells.
Figure \ref{fig3}(a)-(b) shows the mesh and density at $t=0.4$ obtained by using
the second-order PCP Lagrangian scheme, while Figure \ref{fig3}(c)-(d)
plots the density and pressure with respect to the radial radius  obtained by the first- and second-order schemes.
The results show that the contact discontinuity can be exactly resolved and
the second-order PCP Lagrangian scheme gives a better result
than the first-order. Moreover, the  results in  Figure \ref{fig3}  are obviously
better than those in Figure \ref{fig:WB_cartesian}.
\end{example}

\begin{figure}[h!]
  \centering
  \subfigure[Mesh]{
    \includegraphics[width=0.45\textwidth, trim=40 30 30 40, clip]{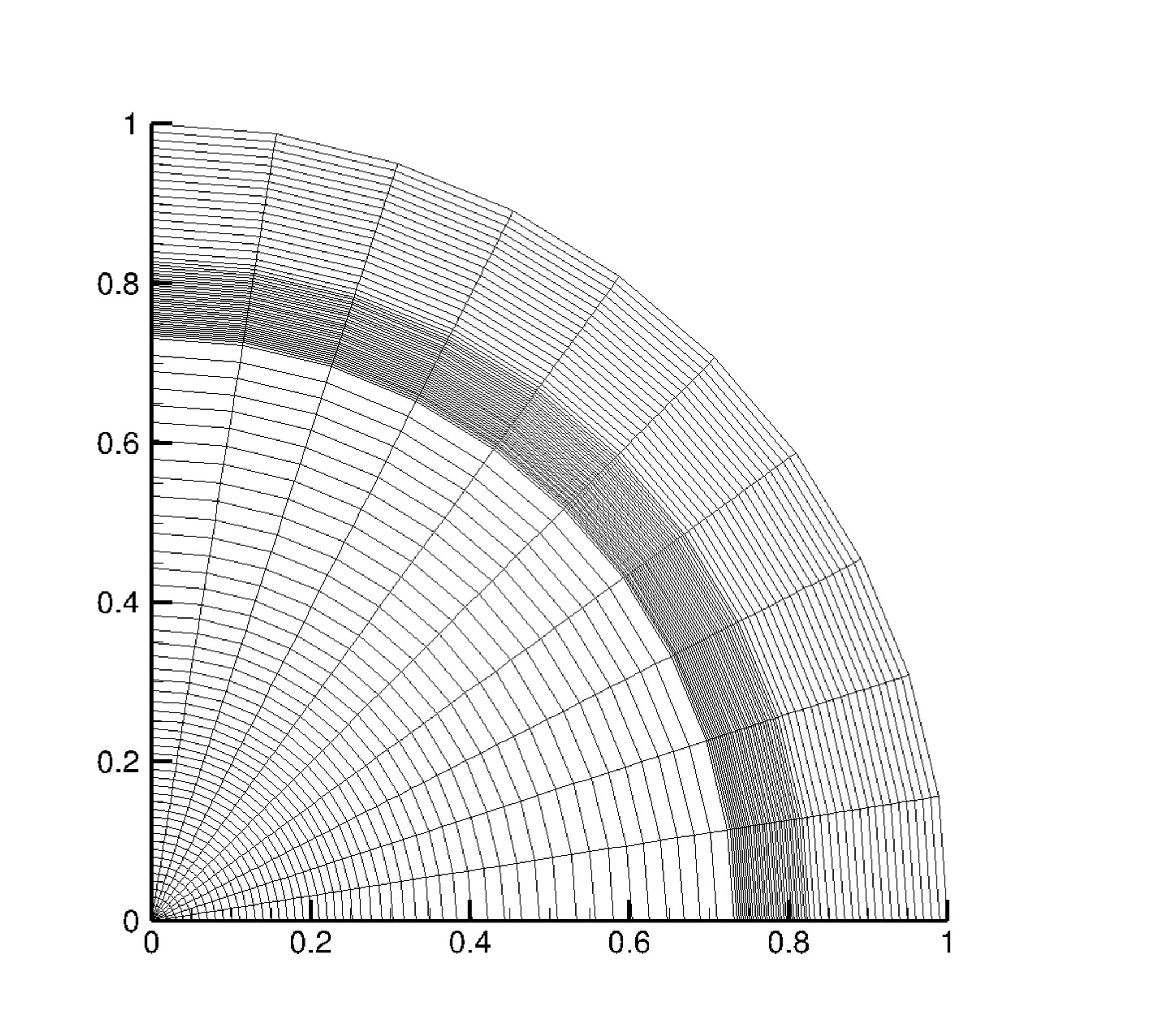}
  }
  \subfigure[$\rho(x,y)$]{
    \includegraphics[width=0.45\textwidth, trim=40 30 30 40, clip]{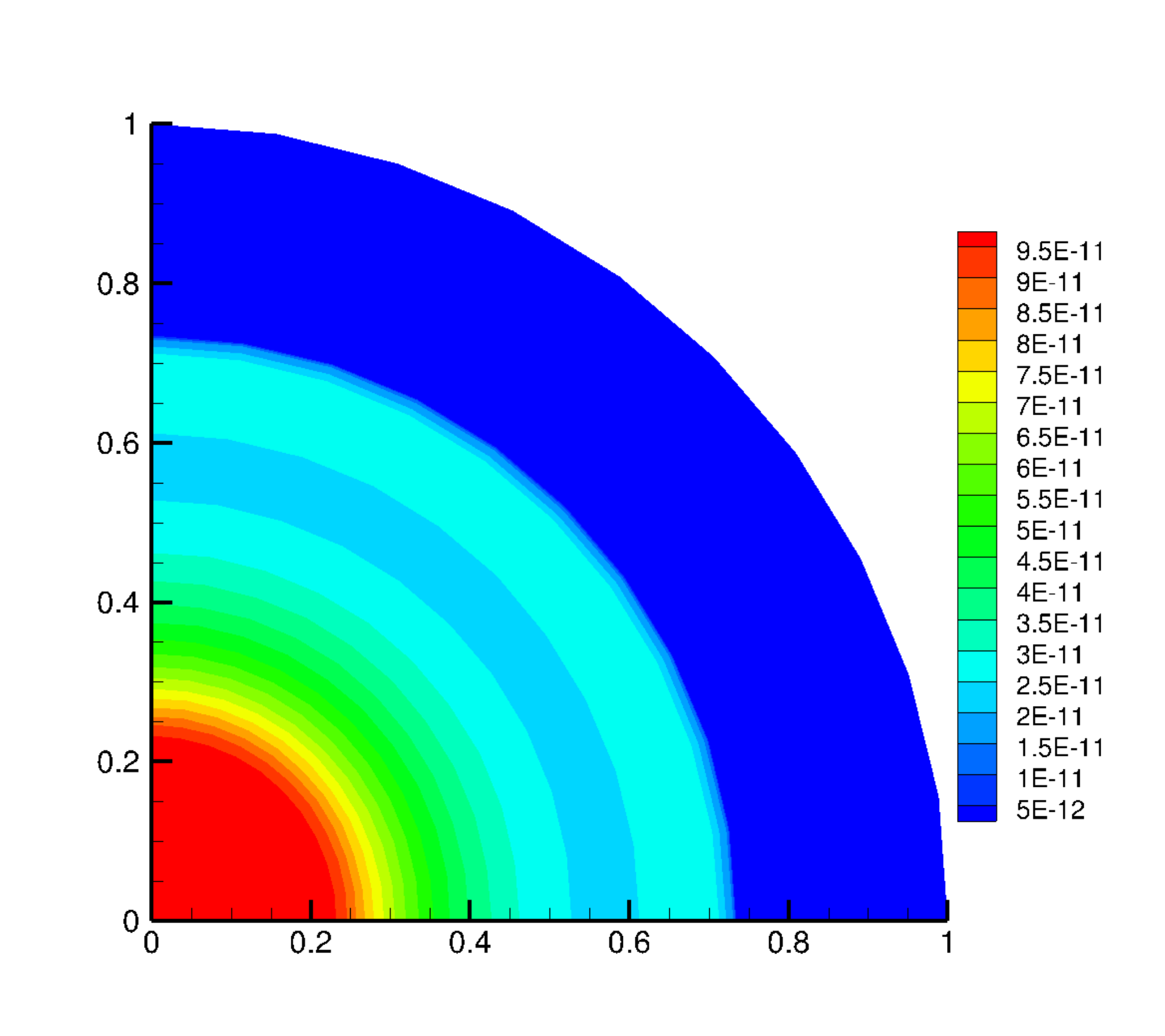}
  }
  \subfigure[$\rho(r)$]{
    \includegraphics[width=0.45\textwidth, trim=30 30 60 40, clip]{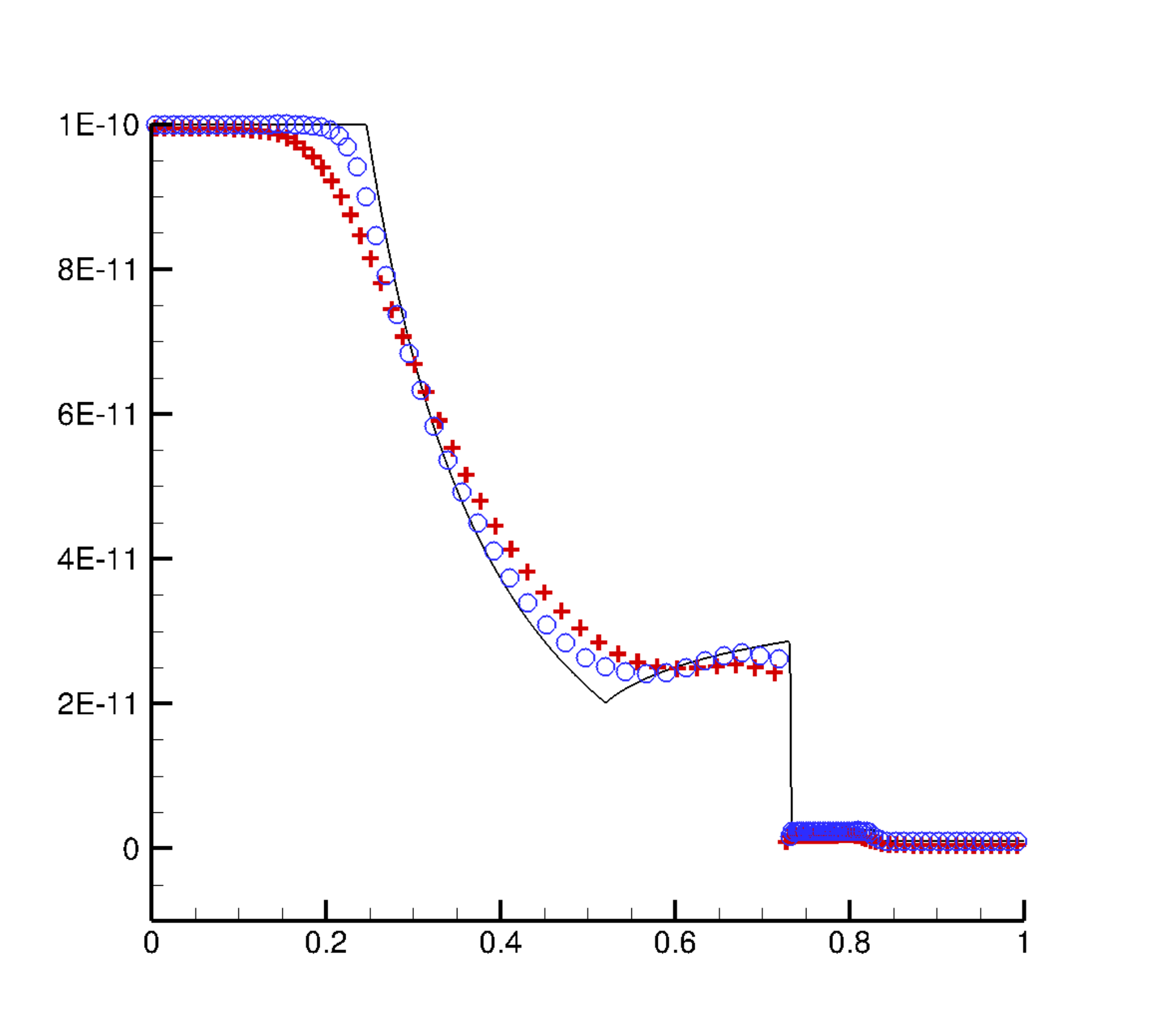}
  }
  \subfigure[$p(r)$]{
    \includegraphics[width=0.45\textwidth, trim=30 30 60 40, clip]{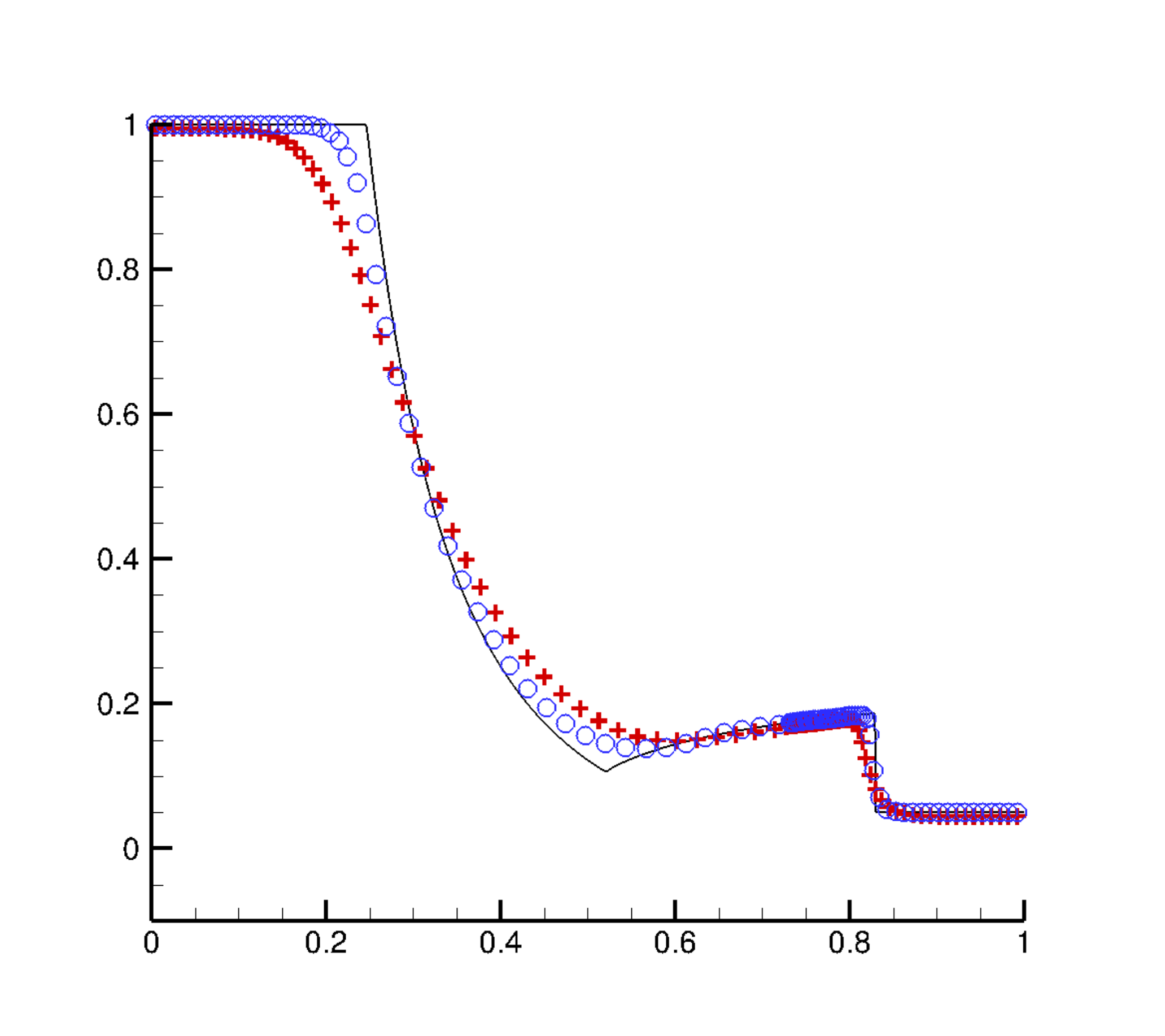}
  }
  \caption{Example \ref{example3}: Mesh  and solutions at $t=0.4$.
 The  symbols
    ``$+$'' and  ``$\circ$'' denote the solutions obtained by the first- and second-order PCP Lagrangian schemes, respectively.}
  \label{fig3}
\end{figure}

\begin{example}[Strong blast problem on the polar mesh]\label{example4}
Consider a strong blast problem with
the initial data
\begin{equation}
  (\rho,u_x,u_y,p)=\begin{cases}
    (1,0,0,1),       & r<0.5,\\
    (1,0,0,10^{-12}), & r>0.5.
  \end{cases}
\end{equation}
It is similar to Example \ref{example3} so that
the flow pattern is similar to that in Example \ref{example3}.
However, it is very challenging for the Lagrangian scheme
because there exists the extremely low pressure and the very narrow region
between the contact discontinuity and the shock wave.
If the scheme is not  PCP, then its calculation may result in failure.

 Figure \ref{fig:SB_polar}(a)-(c)
the pressure, density and the close-up of the density obtained by using
the first and the second order schemes  with $100\times 10$ cells, where
    the solid line is the reference solution obtained by a second-order TVD Eulerian scheme with Lax-Friedrichs flux in  the cylindrical coordinate with $10000$ cells.
    Figure \ref{fig:SB_polar}(d) shows the mesh with $100\times 10$ cells at $t=0.4$ obtained by using
the second-order PCP Lagrangian scheme.
It can be seen that our PCP schemes can capture the narrow region between the
contact discontinuity and the shock wave well, and  the second order scheme is better
than the first-order.
\end{example}

\begin{figure}[h!]
  \centering

  \subfigure[$\rho(r)$]{
    \includegraphics[width=0.45\textwidth, trim=40 20 30 40, clip]{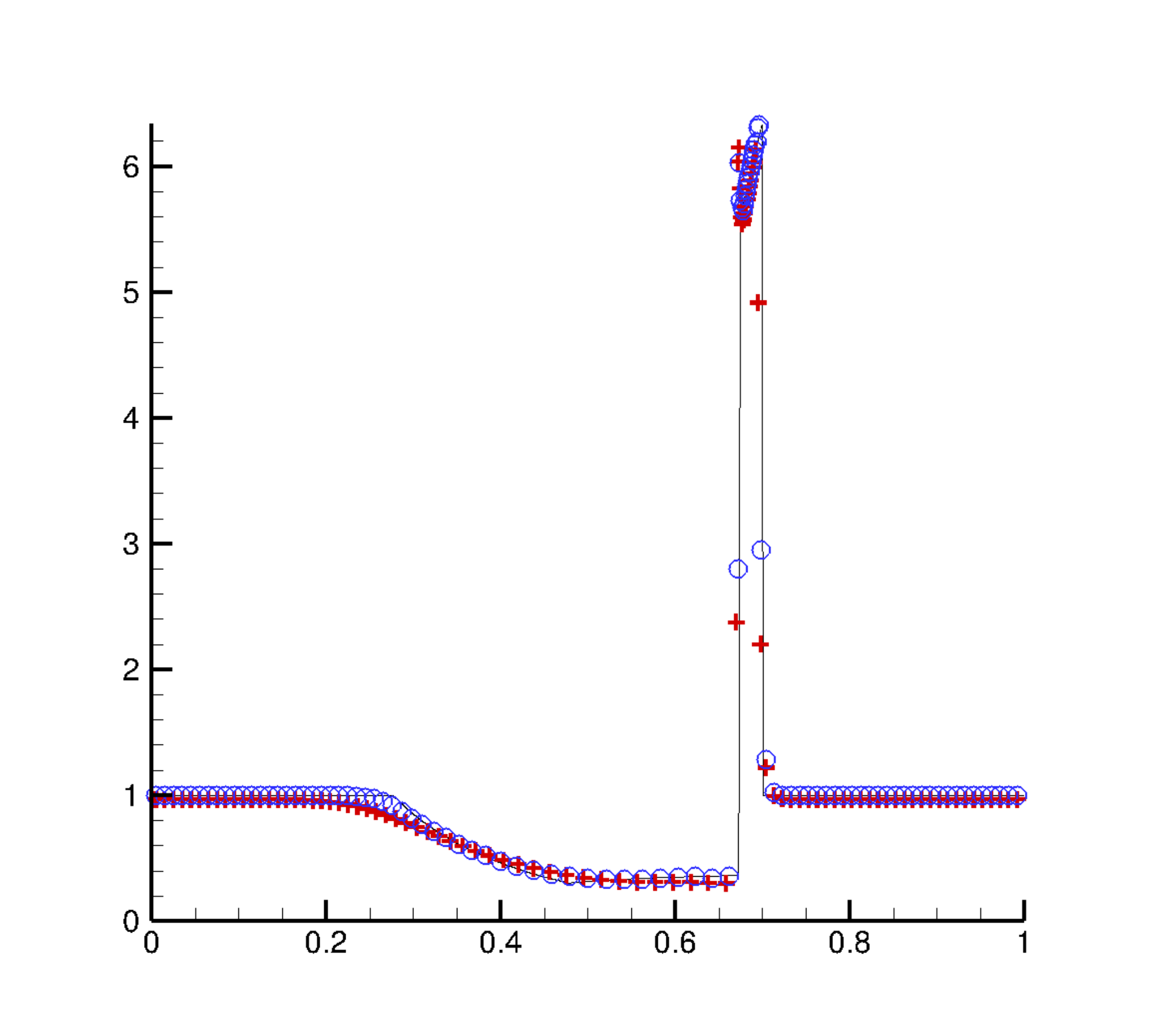}
  }
  \subfigure[Close-up  of $\rho(r)$] {
    \includegraphics[width=0.45\textwidth, trim=40 30 40 40, clip]{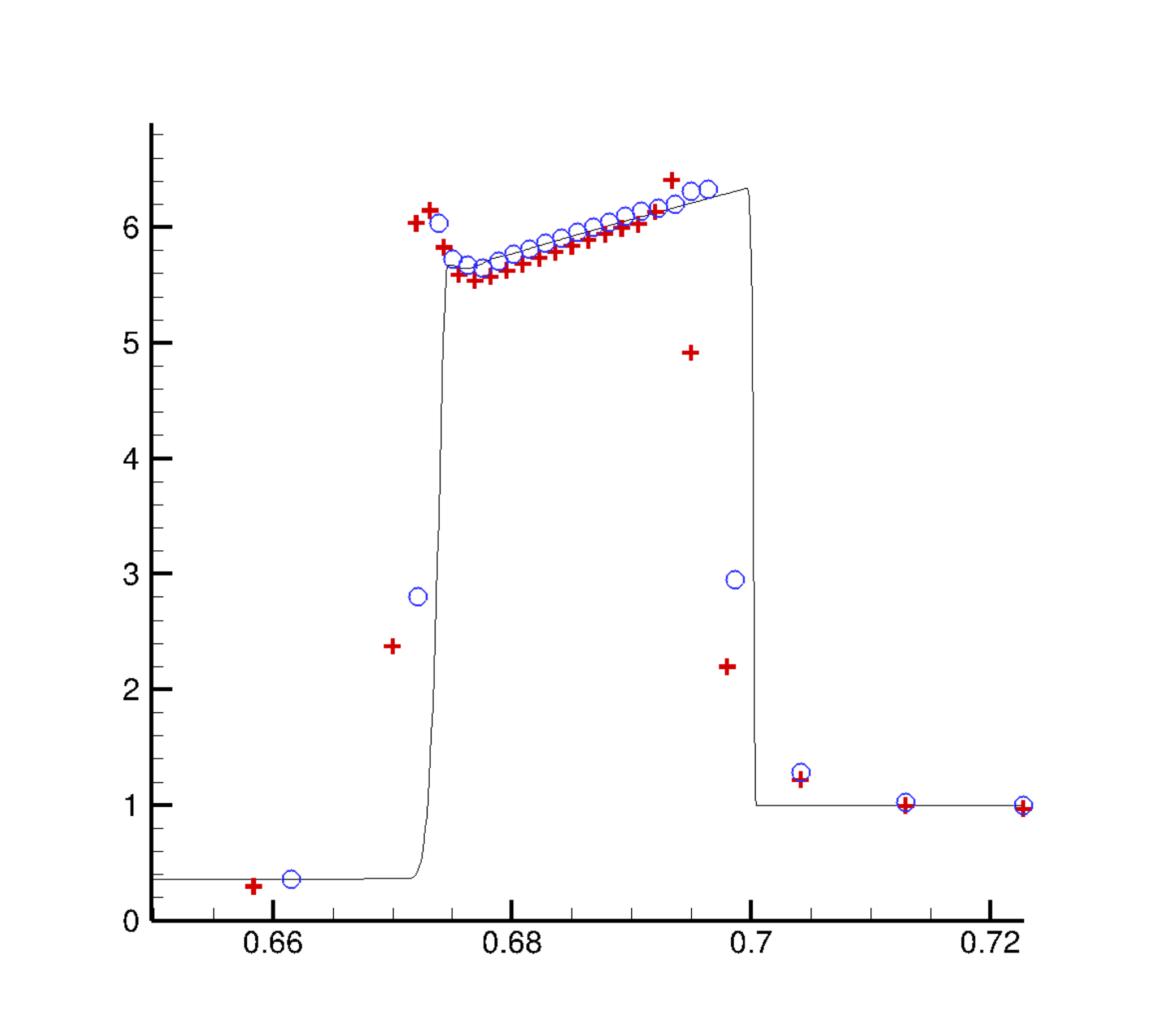}
  }

    \subfigure[$p(r)$]{
    \includegraphics[width=0.45\textwidth, trim=40 30 60 40, clip]{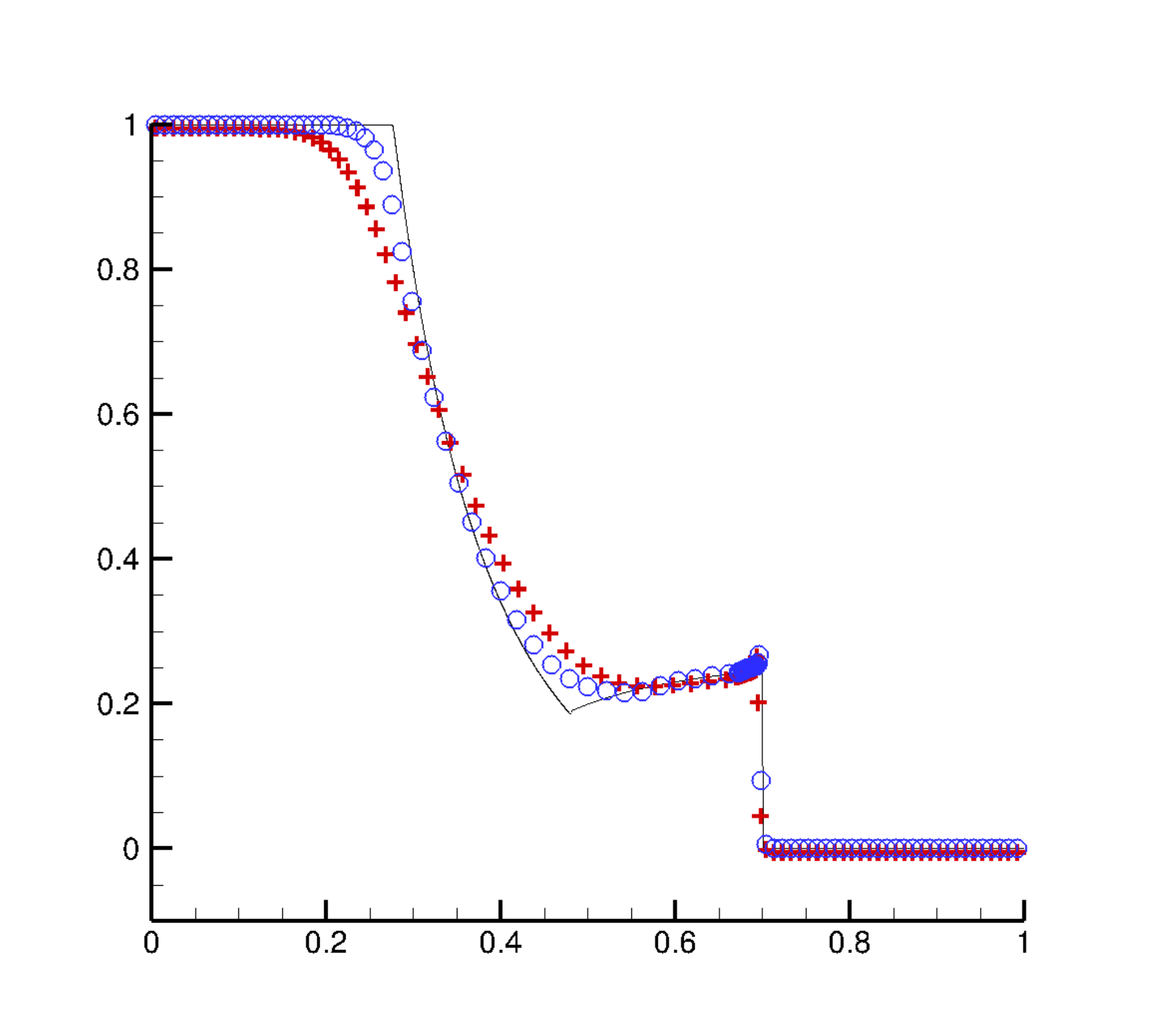}
  }
  \subfigure[Mesh]{
    \includegraphics[width=0.45\textwidth, trim=40 20 30 40, clip]{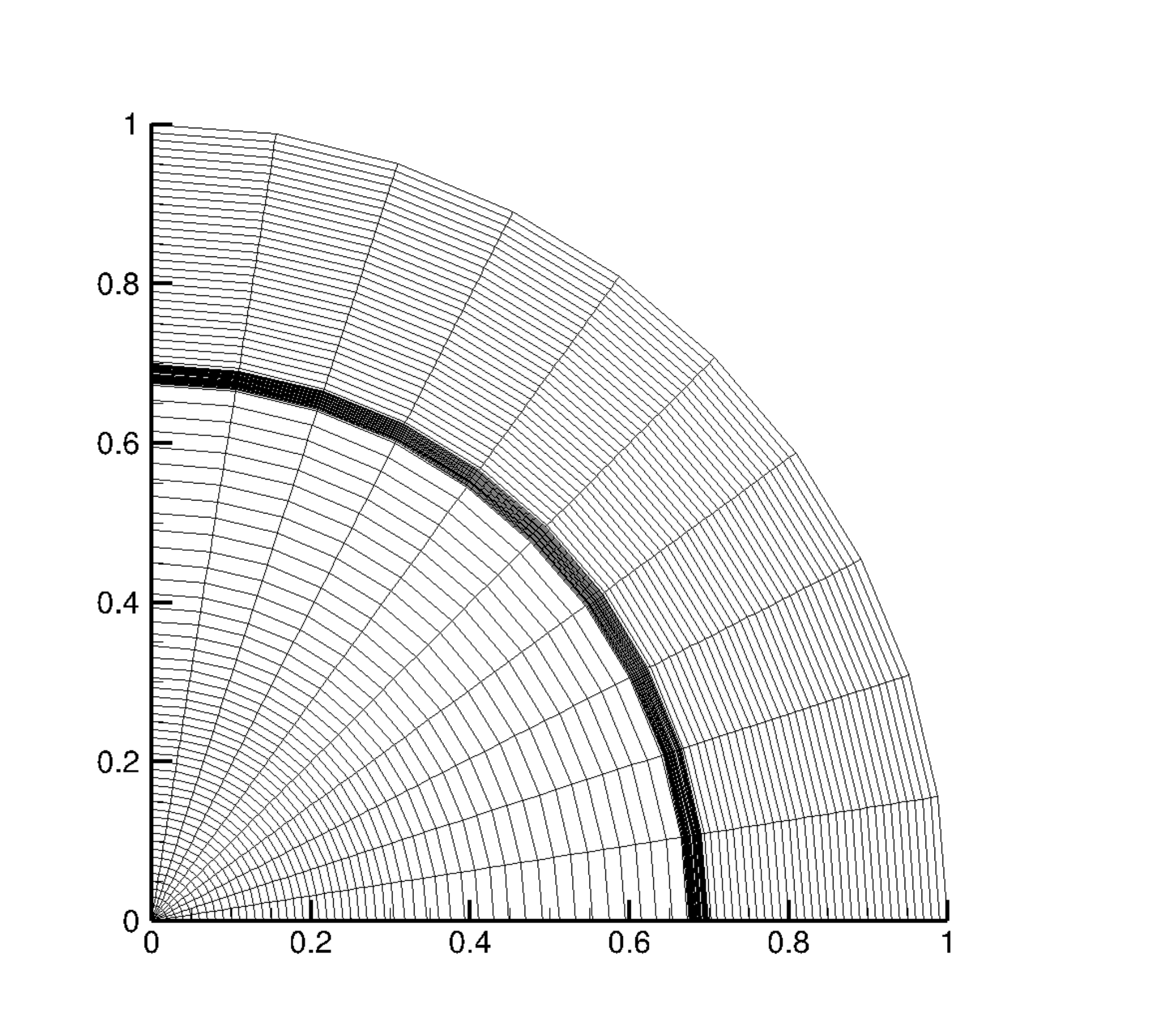}
  }
  \caption{Example \ref{example4}: Solutions and mesh at $t=0.4$.
    The  symbols
    ``$+$'' and   ``$\circ$'' denote the solutions obtained by using the first-
    and second-order PCP Lagrangian schemes, respectively.}
  \label{fig:SB_polar}
\end{figure}

\begin{example}[Implosion problem]\label{example5}
It is an implosion problem, similar to the Noh problem in the non-relativistic case.
The computational domain in
the polar coordinates $(r,\theta)$ is chosen as  $[0,1]\times [0,\pi/2]$
and divided into an initial equal-angled polar mesh.
The initial density and pressure of the fluid in the domain is $1$ and $10^{-12}$
respectively,  and an inward radial velocity  is set as $0.9$.
A strong shock wave is generated by bringing the cold gas to rest at the
origin, and will converge to the origin.  Because the low pressure will appear
in the solution, the negative pressure may be easily produced by  using the non-PCP scheme.

Figure \ref{fig:Noh}(a)-(b)
shows the   mesh with $100\times 10$ cells and the density contour at $t=0.6$
obtained by  the second-order PCP Lagrangian scheme,
while Figure \ref{fig:Noh}(c)-(d)  plots
the radial density and pressure obtained by using the first- and second-order
schemes, where
the solid line is the reference solution obtained by using a first-order Eulerian scheme with Lax-Friedrichs flux in the cylindrical coordinate and with $10000$ cells.
 It is shown that the PCP scheme can be
successfully  simulate such extreme relativistic fluid flow, and  the resolution
of the second-order scheme is obviously better than the first-order even though
there exists a wall heating phenomenon.
\end{example}

\begin{figure}[h!]
  \centering
  \subfigure[Mesh]{
    \includegraphics[width=0.45\textwidth, trim=40 30 60 40, clip]{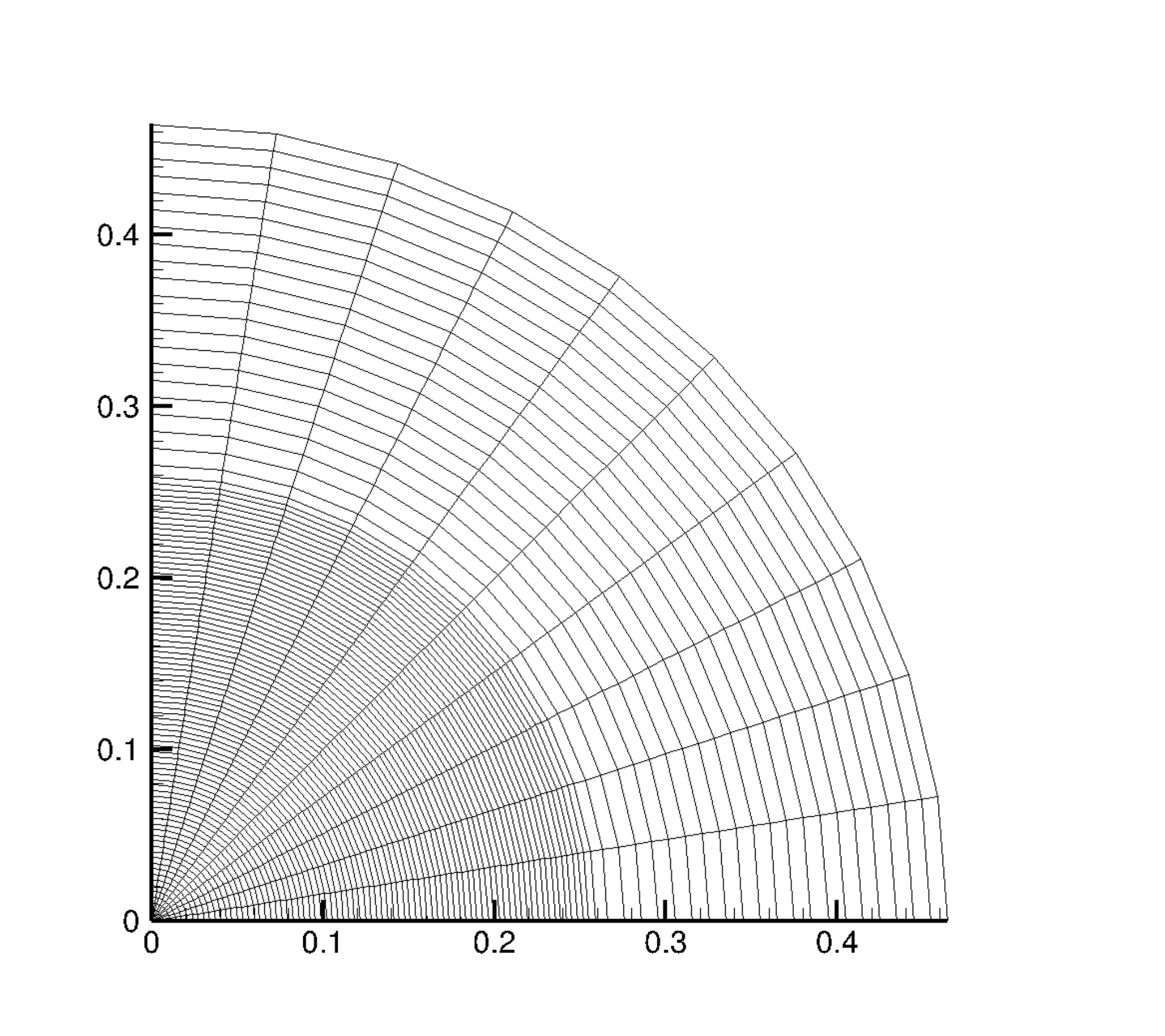}
  }
  \subfigure[$\rho(x,y)$]{
    \includegraphics[width=0.45\textwidth, trim=40 30 60 40, clip]{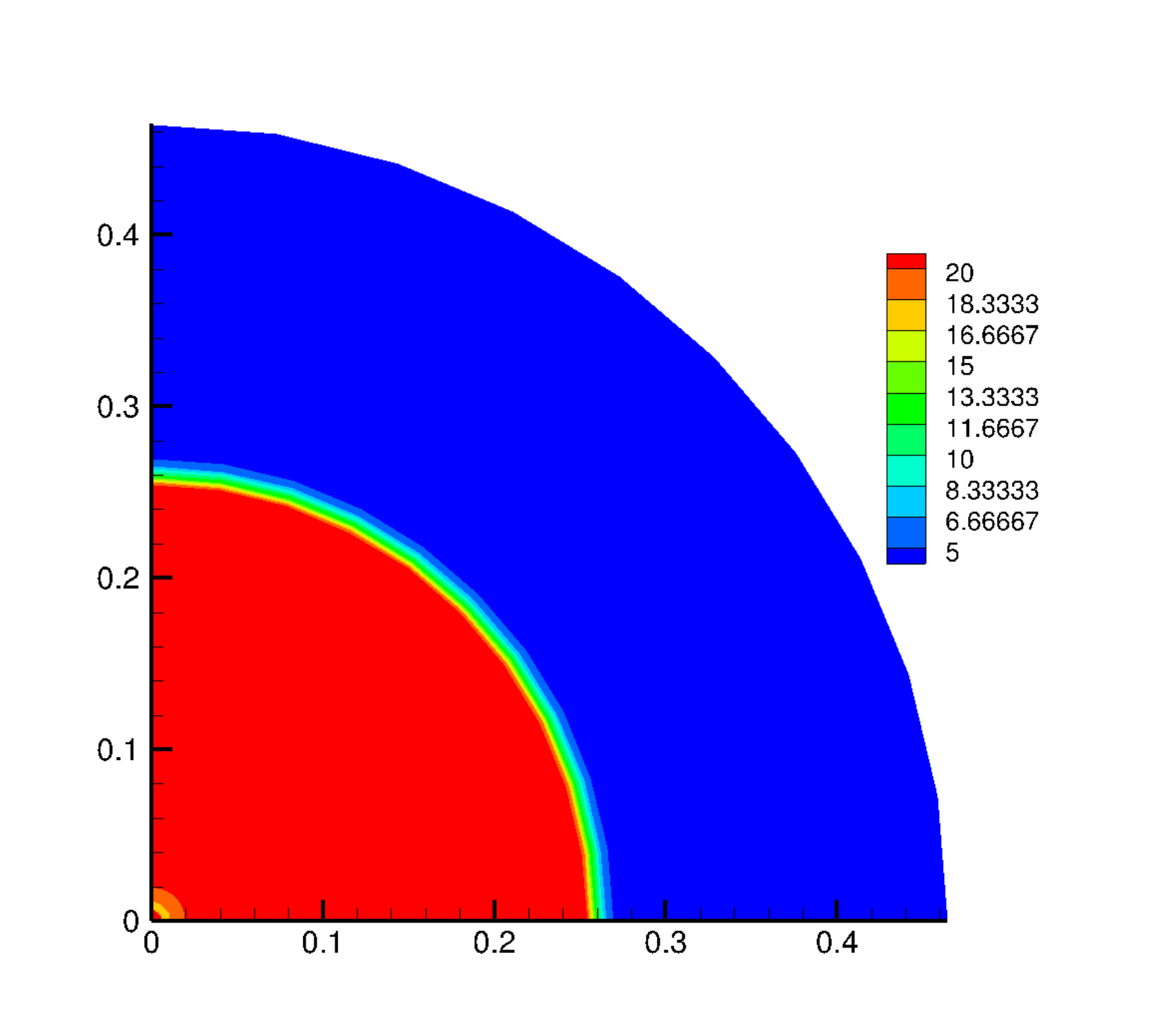}
  }
  \subfigure[$\rho(r)$]{
    \includegraphics[width=0.45\textwidth, trim=40 30 60 40, clip]{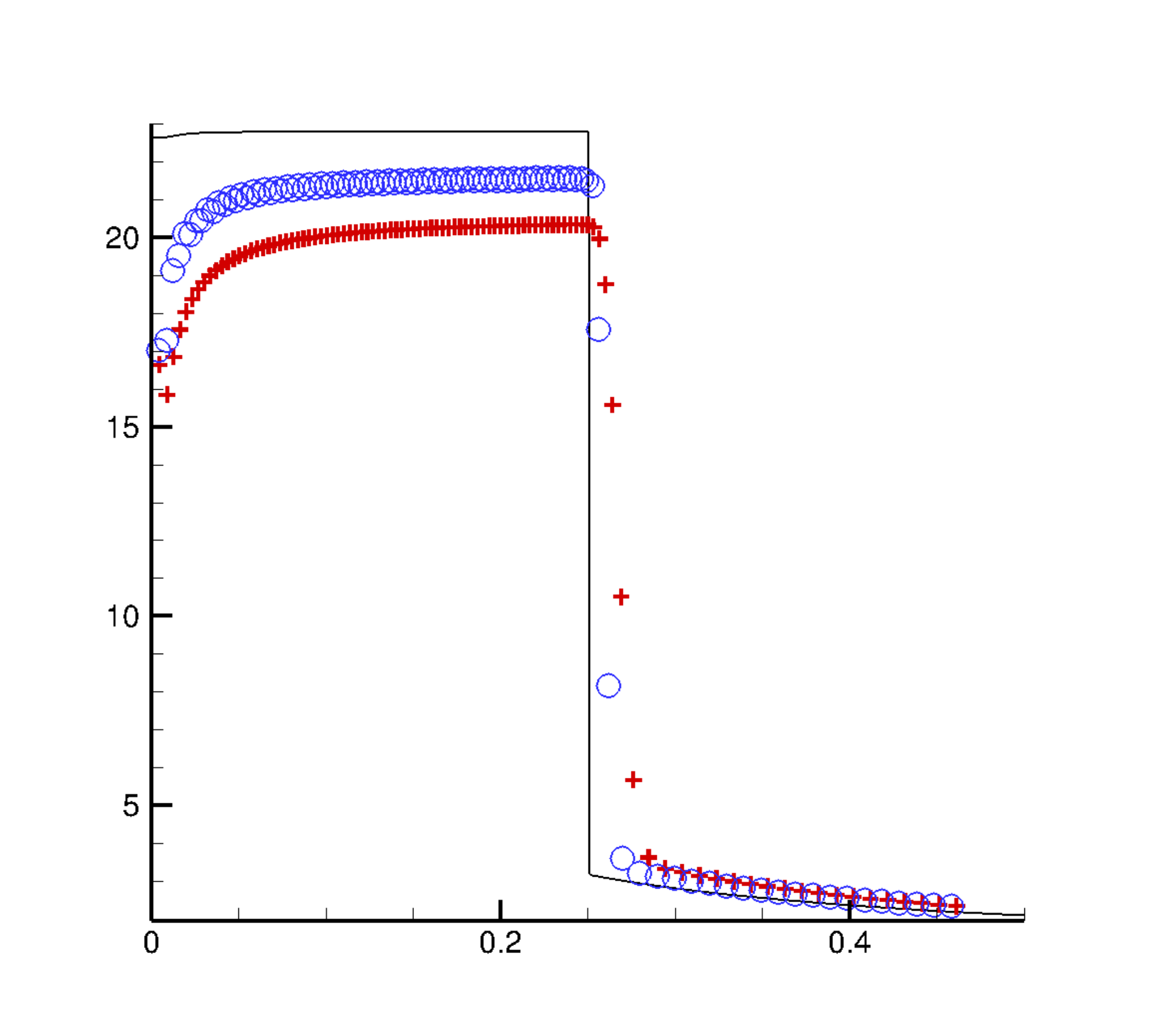}
  }
  \subfigure[$p(r)$]{
    \includegraphics[width=0.45\textwidth, trim=40 30 60 40, clip]{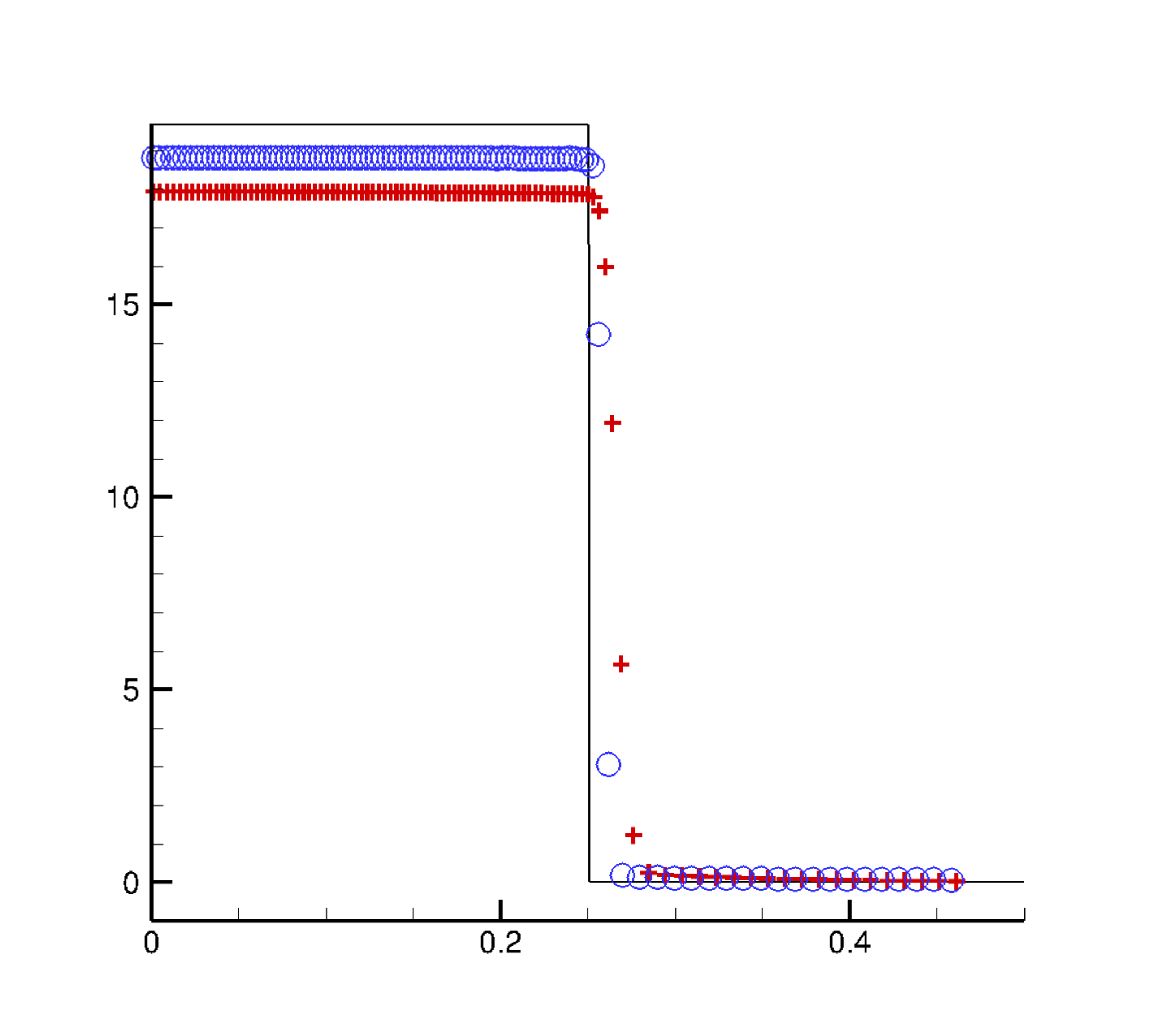}
  }
  \caption{Example \ref{example5}:  Mesh with $100\times 10$ cells and solutions at $t=0.6$.
     The  symbols
    ``$+$'' and ``$\circ$'' denote the solutions obtained by using
    the first- and second-order PCP Lagrangian schemes, respectively.}
  \label{fig:Noh}
\end{figure}

\begin{example}[ICF-like test]\label{example6}
The last problem is about a ICF-like test, which is similar to Example 6.7 in \cite{maire}.
The initial target is defined in the polar coordinates $(r,\theta)$
by $[0,1]\times [0,\pi/2]$.
There are two kinds of materials in the target with the same adiabatic
index $\Gamma=5/3$.
The target is divided into the internal part whose radius is $0.9$ and the external shell.
The initial condition is specified as follows
\begin{equation}
  (\rho,u_x,u_y,p)=\begin{cases}
    (0.01,0,0,5\times 10^{9}),       &  r<0.9,\\
    (1,0,0,10^{11}), & r>0.9,
  \end{cases}
\end{equation}
and on the external boundary of the shell, an additional pressure is added
as \begin{equation}
  \widetilde{p}=10^{12}\times(1+0.05\cos(6\theta)).
\end{equation}
The period of the perturbation of $\widetilde{p}$ is $\pi/3$.

 Figures \ref{fig:ICF1}-\ref{fig:ICF2} show the mesh, density $\rho$,
  radial velocity $u_r$, and the pressure $p$ at $t=1$ obtained
 by using the first- and second-order PCP Lagrangian schemes with $100\times 30$ cells, respectively.
We can see that the outmost mesh becomes sunken periodically
due to the perturbation in the additional pressure, and the mesh points gather
near the interface between the internal part and the external
shell. Obviously, the second-order PCP Lagrangian scheme gives a better resolution
near the interface than the first-order.
\end{example}

\begin{figure}[h!]
  \centering
  \subfigure[Mesh]{
    \includegraphics[width=0.45\textwidth, trim=10 10 30 30, clip]{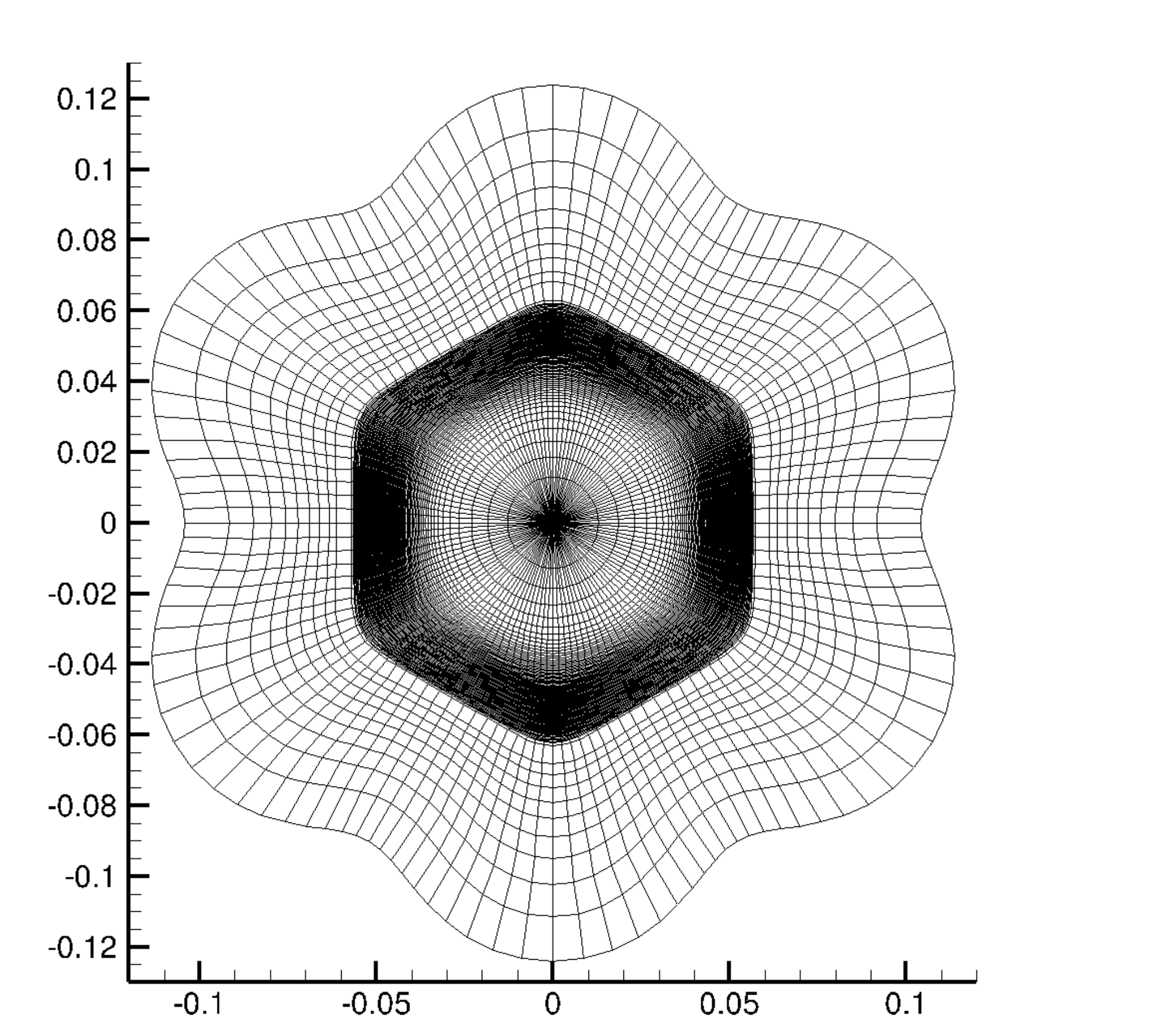}
  }
  \subfigure[$\rho$]{
    \includegraphics[width=0.45\textwidth, trim=10 10 30 30, clip]{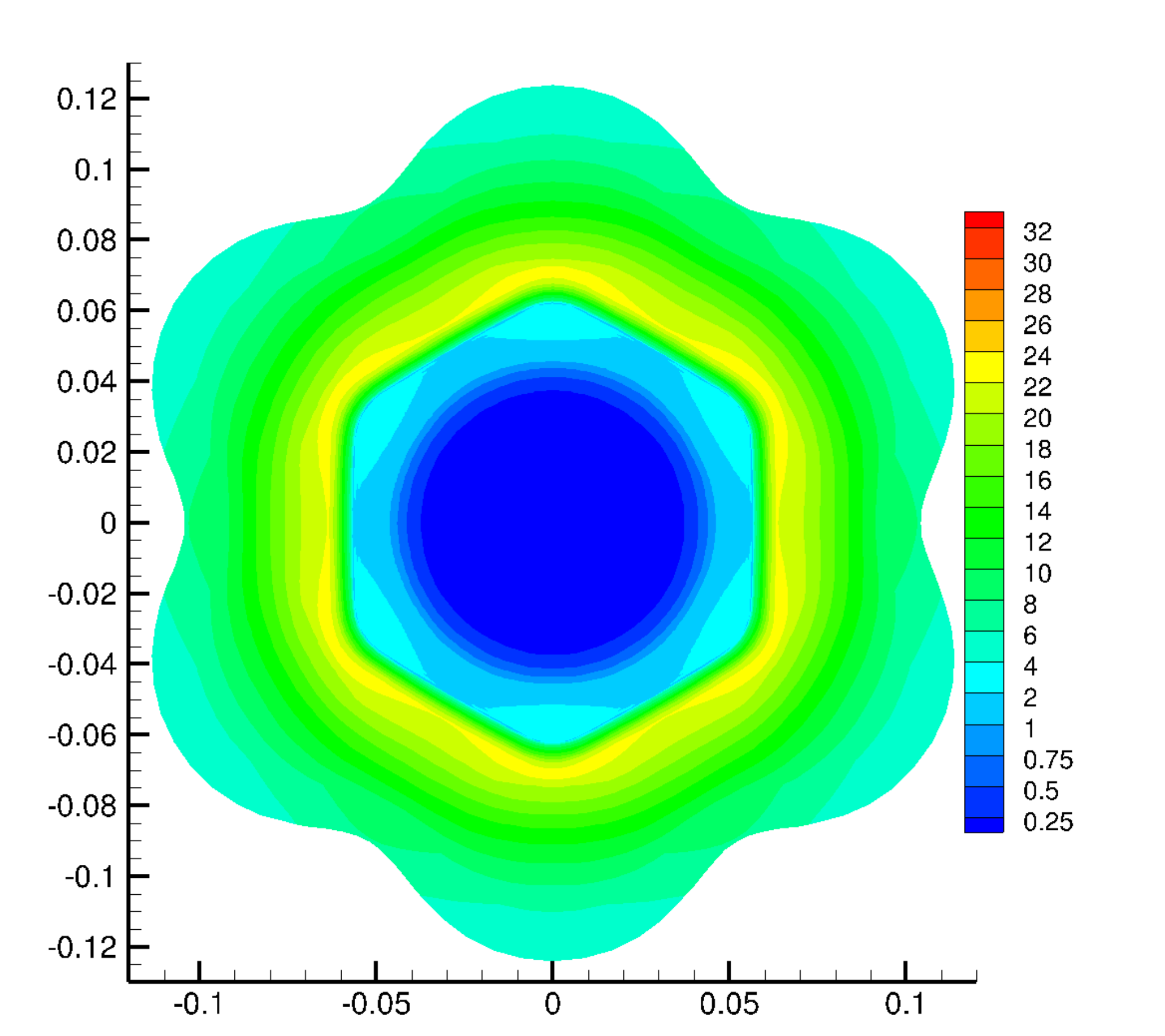}
  }
  \subfigure[$\sqrt{u^2+v^2}$]{
    \includegraphics[width=0.45\textwidth, trim=10 10 30 30, clip]{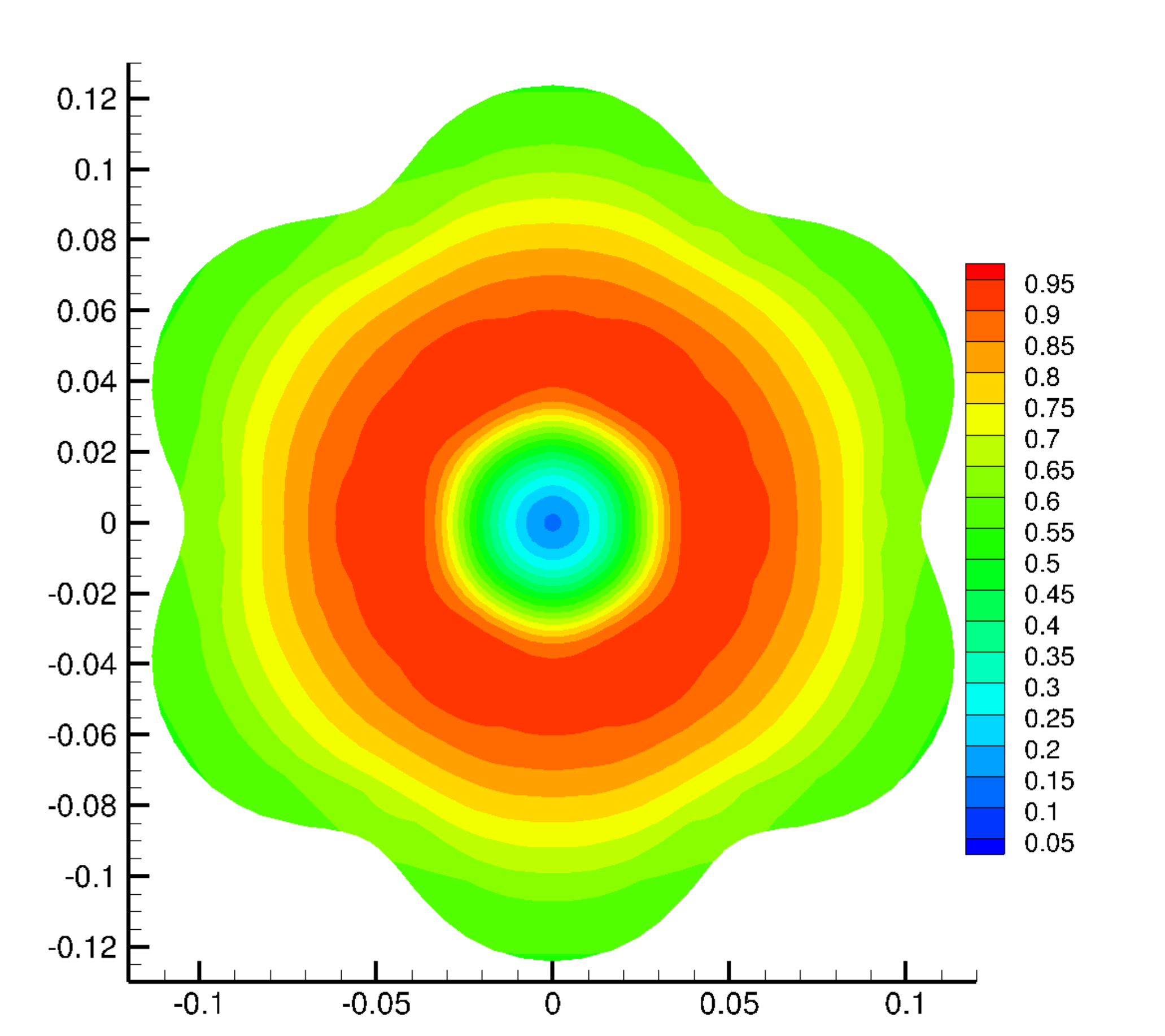}
  }
  \subfigure[$p$]{
    \includegraphics[width=0.45\textwidth, trim=10 10 20 30, clip]{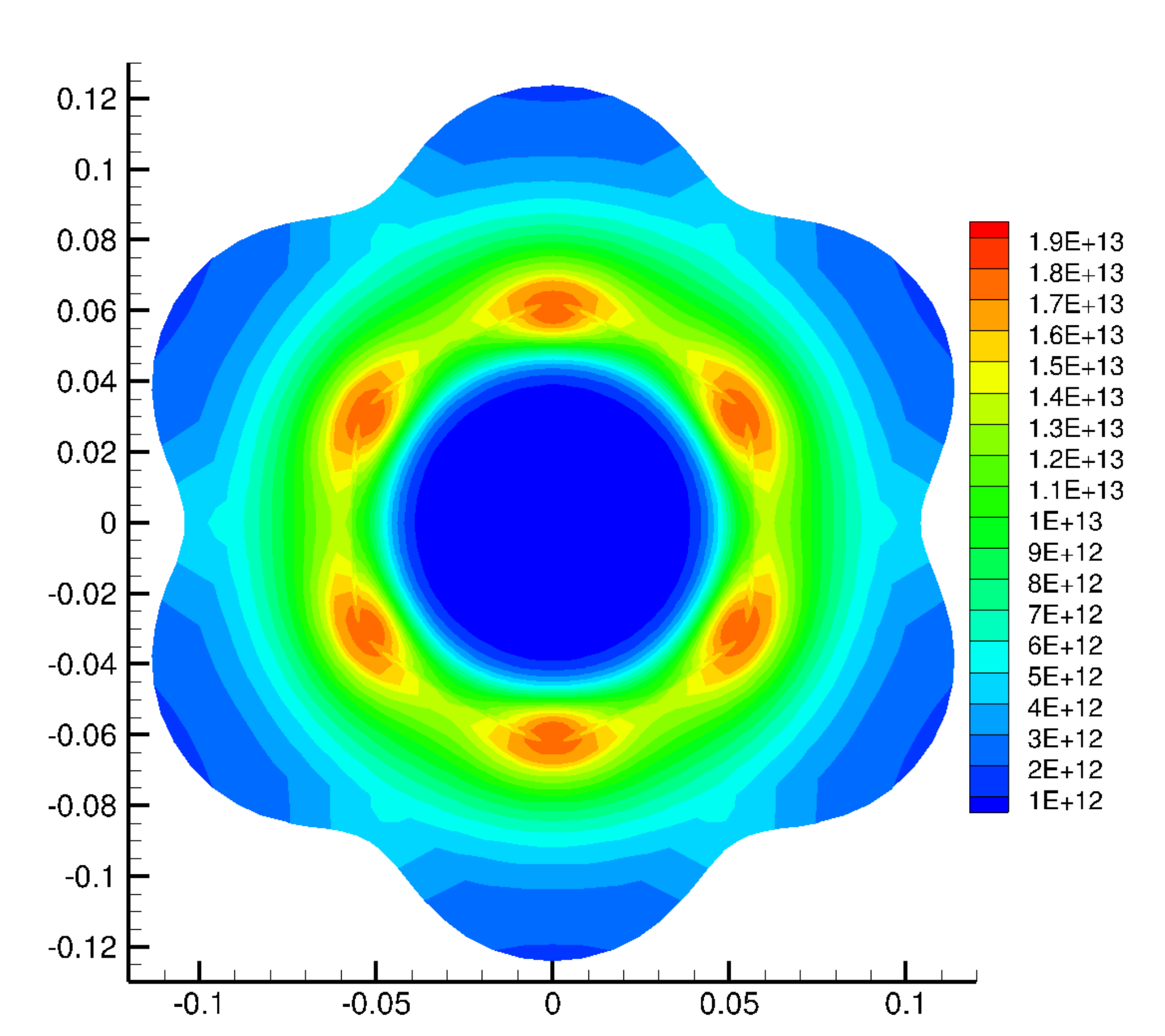}
  }
  \caption{Example \ref{example6}: Mesh and solutions at $t=1$ obtained by using the first-order
  scheme with  $100\times 30$ cells.}
  \label{fig:ICF1}
\end{figure}

\begin{figure}[h!]
  \centering
  \subfigure[Mesh]{
    \includegraphics[width=0.45\textwidth, trim=10 10 30 30, clip]{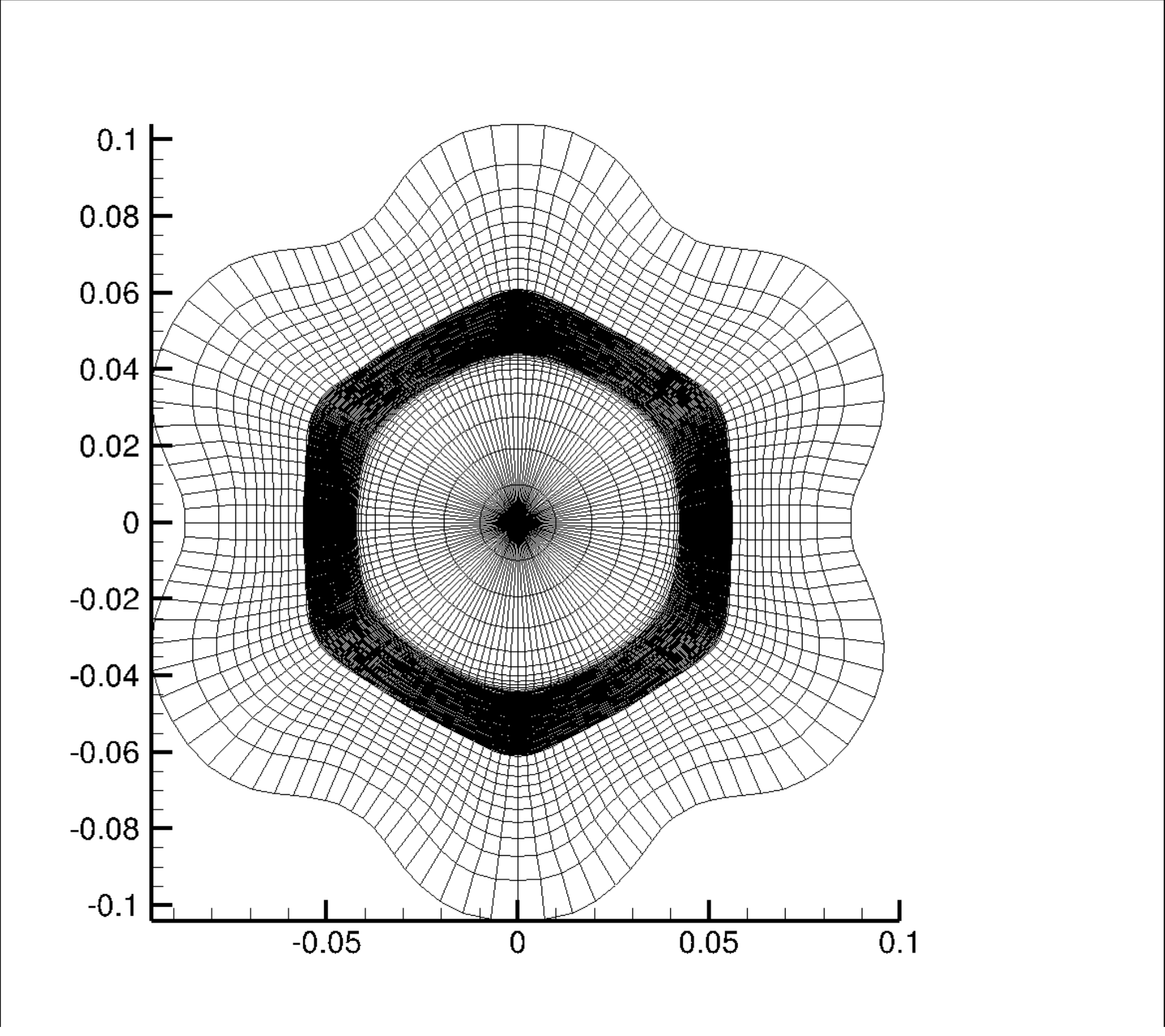}
  }
  \subfigure[$\rho$]{
    \includegraphics[width=0.45\textwidth, trim=10 10 30 30, clip]{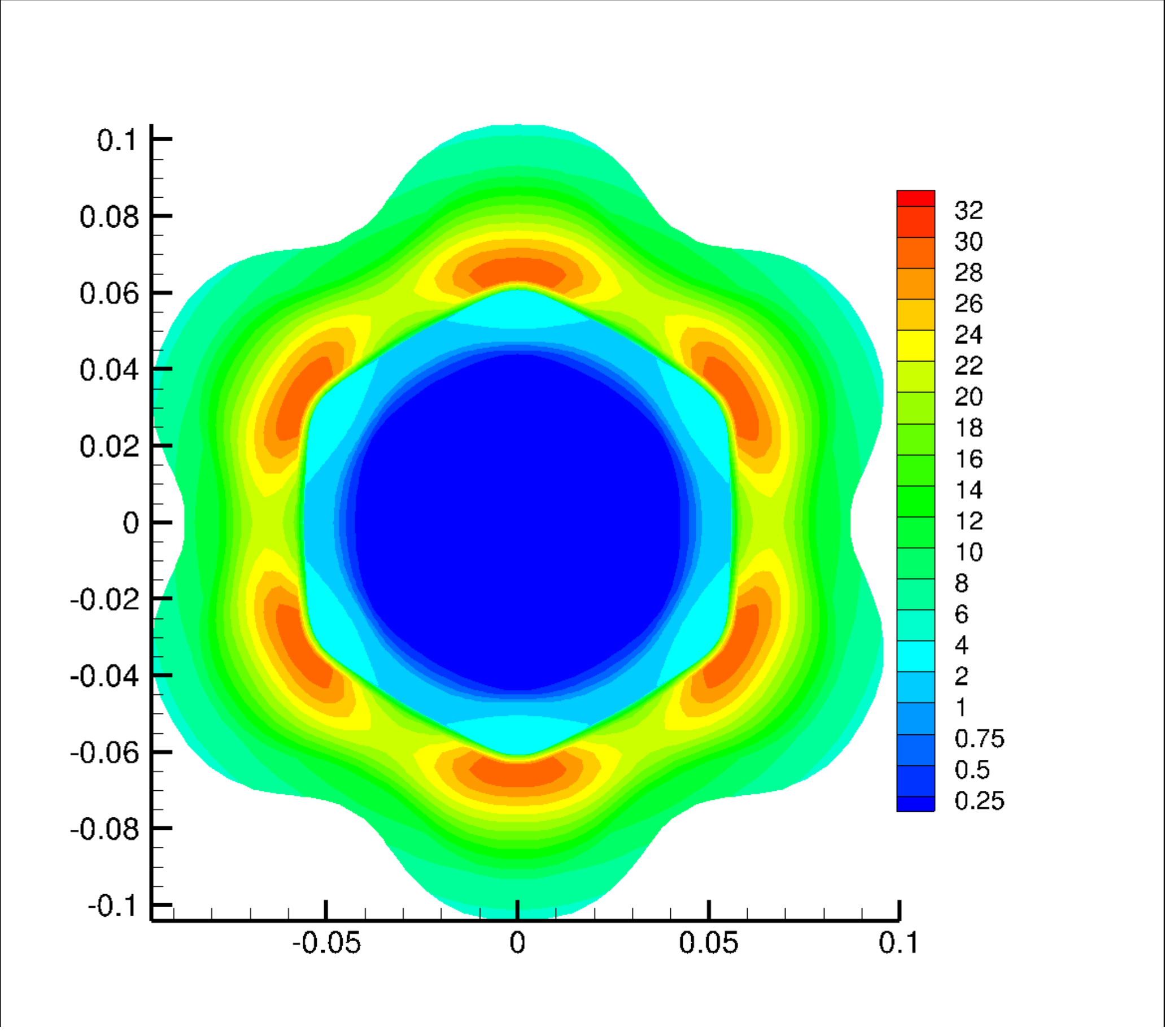}
  }
  \subfigure[$\sqrt{u^2+v^2}$]{
    \includegraphics[width=0.45\textwidth, trim=10 10 30 30, clip]{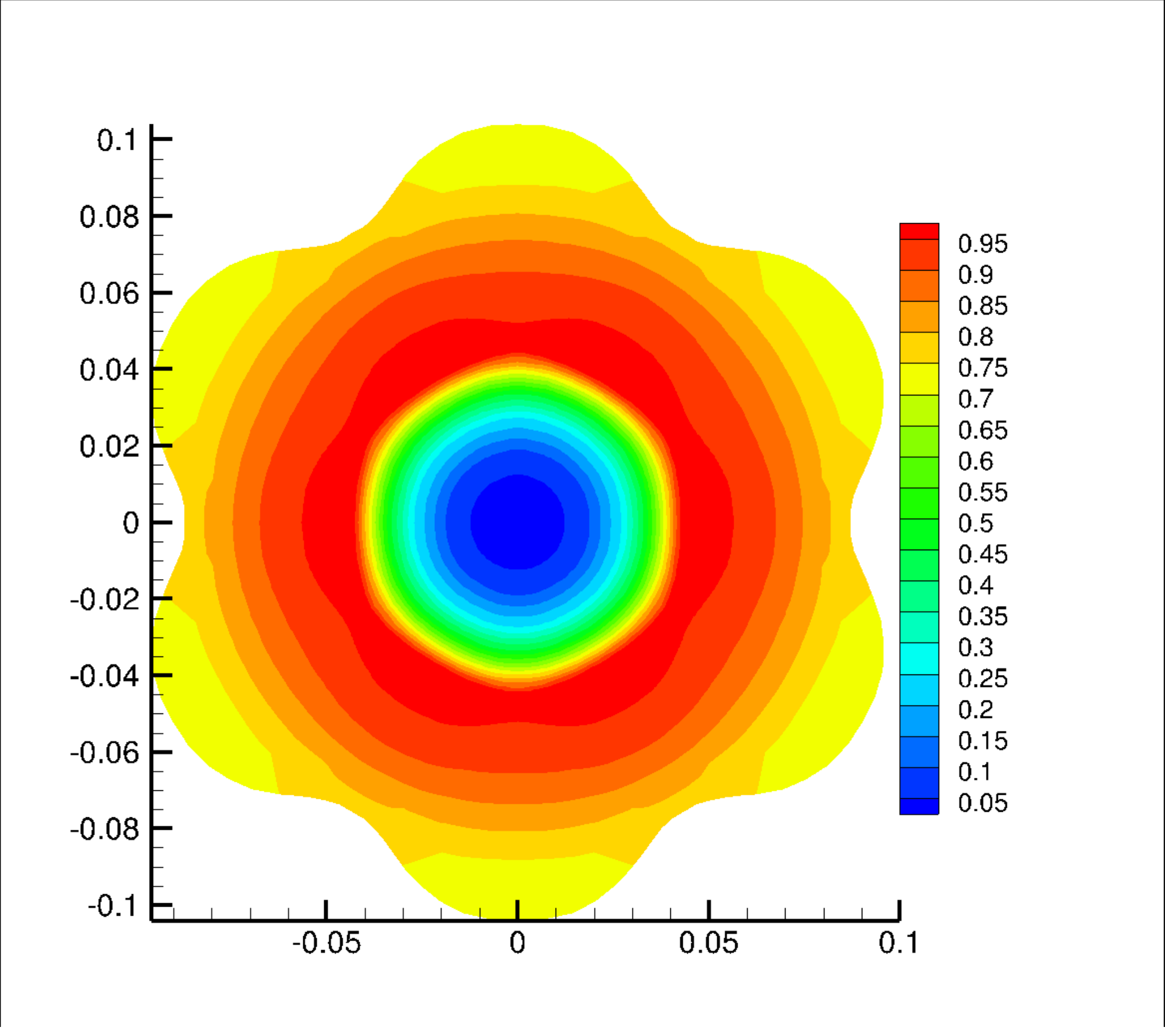}
  }
  \subfigure[$p$]{
    \includegraphics[width=0.45\textwidth, trim=10 10 20 30, clip]{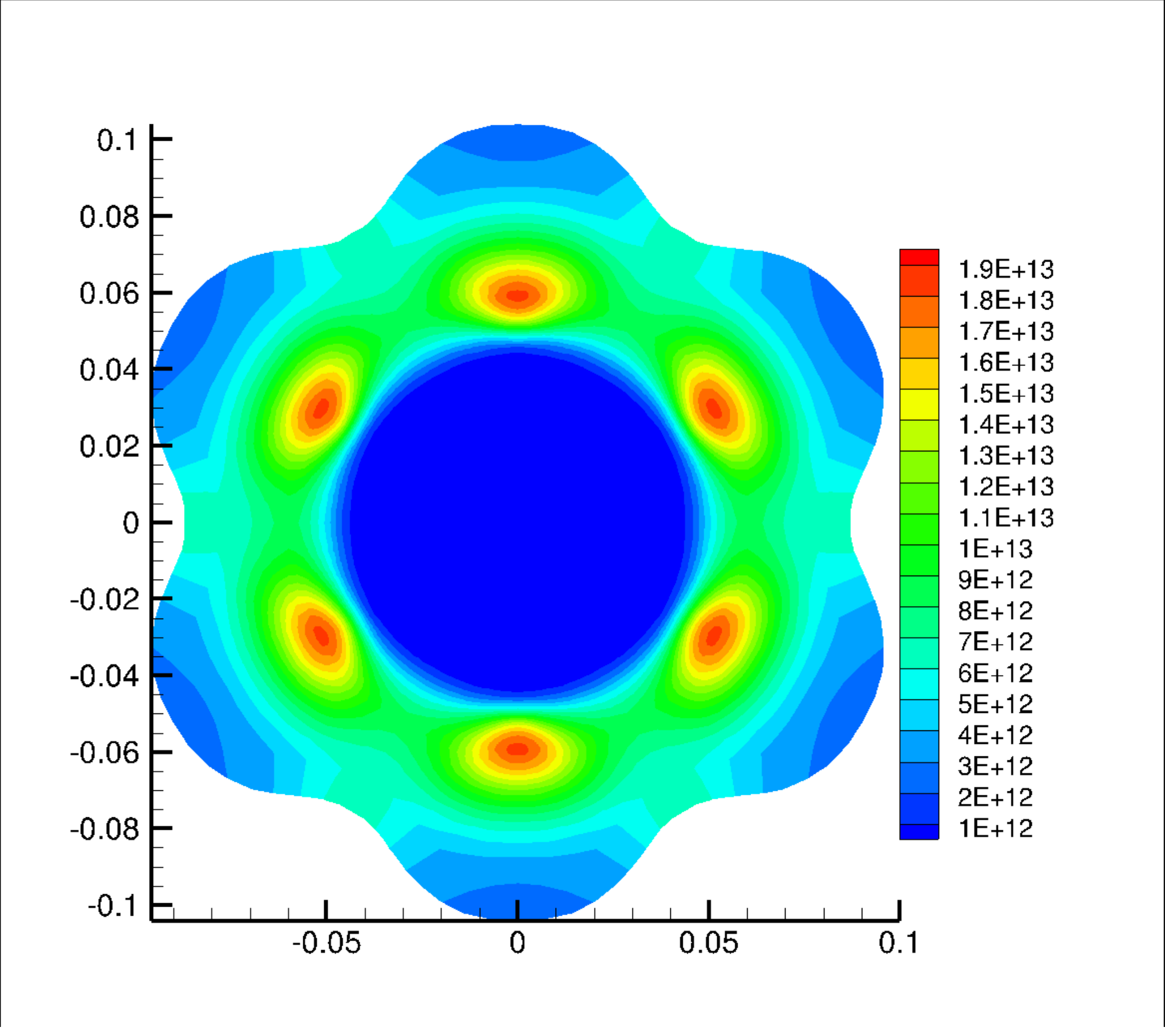}
  }
  \caption{Same as Figure \ref{fig:ICF1} except for the second-order scheme.}
  \label{fig:ICF2}
\end{figure}

\section{Conclusions}\label{section:conclusion}

This paper studied the physical-constraints-preserving (PCP) Lagrangian finite volume schemes for
one- and two-dimensional special relativistic hydrodynamic (RHD) equations.
Our attention was paid to the Lagrangian schemes with the HLLC Riemann solver.
First, we proved that the intermediate states in the HLLC Riemann solver were
 admissible or PCP (that is, the rest-mass density and pressure are positive
 and the fluid velocity magnitude is less than the speed of light) when the HLLC wave speeds were estimated suitably.
 It was worth noting that such PCP property has been observed in \cite{mignone},
 but  no rigorously mathematical proof was given there since
 it was confront with considerable difficulty and  pretty challenging.
Then we showed that the first-order accurate Lagrangian scheme with the HLLC
Riemann solver and forward Euler time discretization was PCP
and developed  the higher-order accurate PCP Lagrangian schemes
by using the high-order accurate strong stability preserving (SSP)
time discretizations, the WENO reconstruction procedure and the scaling PCP limiter.
Finally, several one- and two-dimensional numerical experiments were conducted to
demonstrate the accuracy and the effectiveness of the PCP Lagrangian schemes in solving
the special RHD problems involving  large Lorentz factor, or low
rest-mass density or low pressure or strong discontinuities etc.

\section*{Acknowledgements}
This work was partially supported by the Special Project on High-performance Computing
under the National Key R\&D Program (No. 2016YFB0200603), Science Challenge Project
(No. JCKY2016212A502), and the National Natural Science Foundation of China (Nos.
91330205, 91630310, 11421101, 11871111).

\begin{appendix}
\renewcommand{\thesection}{\Alph{section}}

\section{Proof of Lemma \ref{lem3}}\label{appendix-A}
\renewcommand{\thesection}{\Alph{section}}

This appendix proves the properties (\romannumeral1)-(\romannumeral7) presented in Lemma \ref{lem3} orderly.
For the sake of simplicity, denote
$s_{\min}^{\pm}=s_{\min}(\vec{U}^{\pm})$ and $s_{\max}^{\pm}=s_{\max}(\vec{U}^{\pm})$.  

(\romannumeral1) Since  $A^{\pm}$ defined in \eqref{eq67} can be rewritten as
\begin{equation*}
A=sE-m=(s-u)E-up=\frac{p}{c_s^2(1-u^2)}\big((\Gamma-c_s^2+c_s^2u^2)s-\Gamma u\big),
\end{equation*}
where the superscript $\pm$ has been omitted,  for left  and right states state $\vec{U}^{\mp}$
one has
\begin{align*}
A^{-}&=\frac{p^{-}}{(c_s^{-})^2\big(1-(u^{-})^2\big)}\bigg(\big(\Gamma-(c_s^{-})^2-(c_s^{-}u^{-})^2\big)s^{-}-\Gamma u^{-}\bigg)\\
&\le\frac{p^{-}}{(c_s^{-})^2\big(1-(u^{-})^2\big)}\bigg(\big(\Gamma-(c_s^{-})^2-(c_s^{-}u^{-})^2\big)s_{\min}^{-}-\Gamma u^{-}\bigg)\\
&=\frac{p^{-}}{(c_s^{-})^2\big(1-(u^{-})^2\big)}\bigg(\big(\Gamma-(c_s^{-})^2-(c_s^{-}u^{-})^2\big)
\frac{u^{-}-c_s^{-}}{1-c_s^{-}u^{-}}-\Gamma u^{-}\bigg)\\
&=\frac{p^{-}}{c_s^{-}\big(1-c_s^{-}u^{-}\big)}\big(-\Gamma+(c_s^{-})^2-c_s^{-}u^{-}\big)
<\frac{p^{-}}{c_s^{-}\big(1-c_s^{-}u^{-}\big)}\big(-\Gamma+\Gamma-1-c_s^{-}u^{-}\big)<0,
\end{align*}
and
\begin{align*}
A^{+}&=\frac{p^{+}}{(c_s^{+})^2\big(1-(u^{+})^2\big)}\bigg(\big(\Gamma-(c_s^{+})^2-(c_s^{+}u^{+})^2\big)s^{+}-\Gamma u^{+}\bigg)\\
&\ge\frac{p^{+}}{(c_s^{+})^2\big(1-(u^{+})^2\big)}\bigg(\big(\Gamma-(c_s^{+})^2-(c_s^{+}u^{+})^2\big)s_{\max}^{+}-\Gamma u^{+}\bigg)\\
&=\frac{p^{+}}{(c_s^{+})^2\big(1-(u^{+})^2\big)}\bigg(\big(\Gamma-(c_s^{+})^2-(c_s^{+}u^{+})^2\big)
\frac{u^{+}+c_s^{+}}{1+c_s^{+}u^{+}}-\Gamma u^{+}\bigg)\\
&=\frac{p^{+}}{c_s^{+}\big(1+c_s^{+}u^{+}\big)}\big(\Gamma-(c_s^{+})^2-c_s^{+}u^{+}\big)
>\frac{p^{+}}{c_s^{+}\big(1-c_s^{+}u^{+}\big)}\big(\Gamma-\Gamma+1-c_s^{-}u^{+}\big)>0.
\end{align*}

(\romannumeral2) Due to the definition of $A^{\pm}$ and $B^{\pm}$  in \eqref{eq67},
  it can have 
\begin{align*}
A^{-}-B^{-}&=s^{-}E^{-}-m^{-}-m^{-}(s^{-}-u^{-})+p^{-}\\
&=\frac{p^{-}}{(c_s^{-})^2(1+u^{-})}\bigg(s^{-}\big(\Gamma-(c_s^{-})^2(1+u^{-})\big)-\Gamma u^{-}
+(c_s^{-})^2(1+u^{-})\bigg)\\
&\le\frac{p^{-}}{(c_s^{-})^2(1+u^{-})}\bigg(s_{\min}^{-}\big(\Gamma-(c_s^{-})^2(1+u^{-})\big)-\Gamma u^{-}
+(c_s^{-})^2(1+u^{-})\bigg)\\
&=\frac{p^{-}}{(c_s^{-})^2(1+u^{-})}\bigg(\frac{\big(\Gamma-(c_s^{-})^2(1+u^{-})\big)(u^{-}-c_s^{-})}
{1-c_s^{-}u^{-}}-\Gamma u^{-}+(c_s^{-})^2(1+u^{-})\bigg)\\
&=\frac{p^{-}(1-u^{-})}{c_s^{-}(1-c_s^{-}u^{-})}\bigg(-\Gamma+c_s^{-}+(c_s^{-})^2\bigg)
 <\frac{p^{-}(1-u^{-})}{c_s^{-}(1-c_s^{-}u^{-})}\bigg(-\Gamma+c_s^{-}+\Gamma-1\bigg)<0,
\end{align*}
and 
\begin{align*}
A^{+}+B^{+}&=s^{+}E^{+}-m^{+}+m^{+}(s^{+}-u^{+})-p^{-}\\
&=\frac{p^{+}}{(c_s^{+})^2(1-u^{+})}\bigg(s^{+}\big(\Gamma-(c_s^{+})^2(1-u^{+})\big)-\Gamma u^{+}
-(c_s^{+})^2(1-u^{+})\bigg)\\
&\ge\frac{p^{+}}{(c_s^{+})^2(1-u^{+})}\bigg(s_{\max}^{+}\big(\Gamma-(c_s^{+})^2(1-u^{+})\big)-\Gamma u^{+}
-(c_s^{+})^2(1-u^{+})\bigg)\\
&=\frac{p^{+}}{(c_s^{+})^2(1-u^{+})}\bigg(\frac{\big(\Gamma-(c_s^{+})^2(1-u^{+})\big)(u^{+}+c_s^{+})}
{1+c_s^{+}u^{+}}-\Gamma u^{+}-(c_s^{+})^2(1-u^{+})\bigg)\\
&=\frac{p^{+}(1+u^{+})}{c_s^{+}(1+c_s^{+}u^{+})}\bigg(\Gamma-c_s^{-}-(c_s^{-})^2\bigg) 
 >\frac{p^{+}(1+u^{+})}{c_s^{+}(1+c_s^{+}u^{+})}\bigg(\Gamma-c_s^{-}-\Gamma+1\bigg)>0.
\end{align*}

(\romannumeral3) For the states $\vec{U}^{\pm}$, it is easy to verify
\begin{align*}
s^{\pm}A^{\pm}-B^{\pm}&=s^{\pm}(s^{\pm}E^{\pm}-m^{\pm})-m^{\pm}(s^{\pm}-u^{\pm})+p^{\pm}\\
&=(s^{\pm})^2E^{\pm}-2s^{\pm}u^{\pm}(E^{\pm}+p^{\pm})+(u^{\pm})^2(E^{\pm}+p^{\pm})+p^{\pm}\\
&=(s^{\pm}-u^{\pm})^2E^{\pm}+\big(1+(u^{\pm})^2-2s^{\pm}u^{\pm}\big)p^{\pm}\\
&=(s^{\pm}-u^{\pm})^2E^{\pm}+\big(1-(s^{\pm})^2+(s^{\pm}-u^{\pm})^2\big)p^{\pm}>0.
\end{align*}

(\romannumeral4) Because $A^{-}<0$,  one has
\begin{align*}
s^{+}A^{-}-B^{-}&\le s^{-}_{\max}A^{-}-B^{-}
=\frac{u^{-}+c_s^{-}}{1+c_s^{-}u^{-}}\big[s^{-}E^{-}-u^{-}(E^{-}+p^{-})\big]-u^{-}(s^{-}-u^{-})(E^{-}+p^{-})+p^{-}\\
&=\frac{p^{-}}{1+c_s^{-}u^{-}}\bigg[s^{-}(u^{-}+c_s^{-})\bigg(\frac{\Gamma}{(c_s^{-})^2\big(1-(u^{-})^2\big)}-1\bigg)
-u^{-}(u^{-}+c_s^{-})\frac{\Gamma}{(c_s^{-})^2\big(1-(u^{-})^2\big)}\\
&~~~-u^{-}(s^{-}-u^{-})\frac{\Gamma(1+c_s^{-}u^{-})}{(c_s^{-})^2\big(1-(u^{-})^2\big)}+1+c_s^{-}u^{-}\bigg]\\
&=\frac{p^{-}}{c_s^{-}(1+c_s^{-}u^{-})}\bigg(s^{-}\big(\Gamma-c_s^{-}u^{-}-(c_s^{-})^2\big)-\Gamma u^{-}+c_s^{-}(1+c_s^{-}u^{-})\bigg)\\
&\le\frac{p^{-}}{c_s^{-}(1+c_s^{-}u^{-})}\bigg(s_{\min}^{-}\big(\Gamma-c_s^{-}u^{-}-(c_s^{-})^2\big)
-\Gamma u^{-}+c_s^{-}(1+c_s^{-}u^{-})\bigg)\\
&=\frac{p^{-}}{c_s^{-}(1+c_s^{-}u^{-})}\bigg(\frac{(u^{-}-c_s^{-})\big(\Gamma-c_s^{-}u^{-}-(c_s^{-})^2\big)}{1-c_s^{-}u^{-}}
-\Gamma u^{-}+c_s^{-}(1+c_s^{-}u^{-})\bigg)\\
&=\frac{p^{-}\big(1-(u^{-})^2\big)}{1-(c_s^{-}u^{-})^2}\bigg(-\Gamma+1+(c_s^{-})^2\bigg)<0.
\end{align*}
Similarly,  the fact $A^{+}>0$  gives
\begin{align*}
s^{-}A^{+}-B^{+}&\le s^{+}_{\min}A^{+}-B^{+} 
 =\frac{u^{+}-c_s^{+}}{1-c_s^{+}u^{+}}\big[s^{+}E^{+}-u^{+}(E^{+}+p^{+})\big]-u^{+}(s^{+}-u^{+})(E^{+}+p^{+})+p^{+}\\
&=\frac{p^{+}}{1-c_s^{+}u^{+}}\bigg[s^{+}(u^{+}-c_s^{+})\bigg(\frac{\Gamma}{(c_s^{+})^2\big(1-(u^{+})^2\big)}-1\bigg)
-u^{+}(u^{+}-c_s^{+})\frac{\Gamma}{(c_s^{+})^2\big(1-(u^{+})^2\big)}\\
&~~~-u^{+}(s^{+}-u^{+})\frac{\Gamma(1-c_s^{+}u^{+})}{(c_s^{+})^2\big(1-(u^{+})^2\big)}+1-c_s^{+}u^{+}\bigg]\\
&=\frac{p^{+}}{c_s^{+}(1-c_s^{+}u^{+})}\bigg(-s^{+}\big(\Gamma+c_s^{+}u^{+}-(c_s^{+})^2\big)
+\Gamma u^{+}+c_s^{+}(1-c_s^{+}u^{+})\bigg)\\
&\le\frac{p^{+}}{c_s^{+}(1-c_s^{+}u^{+})}\bigg(-s_{\max}^{+}\big(\Gamma+c_s^{+}u^{+}-(c_s^{+})^2\big)
+\Gamma u^{+}+c_s^{+}(1-c_s^{+}u^{+})\bigg)\\
&=\frac{p^{+}}{c_s^{+}(1-c_s^{+}u^{+})}\bigg(-\frac{(u^{+}+c_s^{+})\big(\Gamma+c_s^{+}u^{+}-(c_s^{+})^2\big)}{1+c_s^{+}u^{+}}
+\Gamma u^{+}+c_s^{+}(1-c_s^{+}u^{+})\bigg)\\
&=\frac{p^{+}\big(1-(u^{+})^2\big)}{1-(c_s^{+}u^{+})^2}\bigg(-\Gamma+1+(c_s^{+})^2\bigg)<0.
\end{align*}

(\romannumeral5) Because
\begin{align*}
s^{-}-u^{-}&\le s_{\min}^{-}-u^{-}=\frac{u^{-}-c_s^{-}}{1-c_s^{-}u^{-}}-u^{-}=-\frac{c_s^{-}\big(1-(u^{-})^2\big)}{1-c_s^{-}u^{-}}<0,\\
s^{+}-u^{-}&\ge s_{\max}^{-}-u^{-}=\frac{u^{-}+c_s^{-}}{1+c_s^{-}u^{-}}-u^{-}=\frac{c_s^{-}
\big(1-(u^{-})^2\big)}{1+c_s^{-}u^{-}}>0,
\end{align*}
for the left state $\vec{U}^{-}$ 
and
\begin{align*}
s^{-}-u^{+}&\le s_{\min}^{+}-u^{+}=\frac{u^{+}-c_s^{+}}{1-c_s^{+}u^{+}}-u^{+}=-\frac{c_s^{+}\big(1-(u^{+})^2\big)}{1-c_s^{+}u^{+}}<0,\\
s^{+}-u^{+}&\ge s_{\max}^{+}-u^{+}=\frac{u^{+}+c_s^{+}}{1+c_s^{+}u^{+}}-u^{+}=\frac{c_s^{+}\big(1-(u^{+})^2\big)}{1+c_s^{+}u^{+}}>0.
\end{align*}
for the right state $\vec{U}^{+}$,
it holds
$$s^{-}<u^{\pm}<s^{+}.$$

The remaindering is to prove show the inequality  
\begin{equation}\label{eq:A1}
s^{-}<s^{\ast}<s^{+}.
\end{equation}
 Because (\ref{eq16}) gives
\begin{equation*}
(1-s^{+}s^{\ast})(s^{\ast}A^{-}-B^{-})
=(1-s^{-}s^{\ast})(s^{\ast}A^{+}-B^{+}),
\end{equation*}
and the wave speed is less than the speed of light $c=1$, 
 two terms
$s^{\ast}A^{-}-B^{-}$ and $s^{\ast}A^{+}-B^{+}$ should have the same sign. 
It   means that
\begin{equation*}
(s^{\ast}A^{-}-B^{-})(s^{\ast}A^{+}-B^{+})\geq 0,
\end{equation*}
which gives
\begin{equation}\label{eq:A5}
\frac{B^{+}}{A^{+}}\leq s^{\ast}\leq \frac{B^{-}}{A^{-}},
\ \ \ \mbox{or}\
\frac{B^{-}}{A^{-}}\leq s^{\ast}\leq \frac{B^{+}}{A^{+}}.
\end{equation}
On the other hand, the properties (\romannumeral1) and (\romannumeral3) give
\begin{equation*}
s^{-}<\frac{B^{-}}{A^{-}},~~s^{+}>\frac{B^{+}}{A^{+}},
\end{equation*}
and  the properties (\romannumeral1) and (\romannumeral4) lead to
\begin{equation*}
s^{-}<\frac{B^{+}}{A^{+}},~~s^{+}>\frac{B^{-}}{A^{-}},
\end{equation*}
so one has 
\begin{equation}\label{eq:A6}
s^{-}<\min\left(\frac{B^{-}}{A^{-}},\frac{B^{+}}{A^{+}}\right),~~~~
s^{+}>\max\left(\frac{B^{-}}{A^{-}},\frac{B^{+}}{A^{+}}\right).
\end{equation}
Combing \eqref{eq:A5} with \eqref{eq:A6} complete the proof of \eqref{eq:A1}.

(\romannumeral6)  The left hand side of the inequality (\romannumeral6) can be recast into
\begin{equation*}
4(A^{\pm})^2-(s^{\pm}A^{\pm}+B^{\pm})^2=-(s^{\pm}A^{\pm}+B^{\pm}
+2A^{\pm})(s^{\pm}A^{\pm}+B^{\pm}-2A^{\pm})=-f_1^{\pm}\cdot f_2^{\pm},
\end{equation*}
with
\begin{equation*}
\begin{aligned}
f_1^{\pm}&=s^{\pm}A^{\pm}+B^{\pm}+2A^{\pm}=(s^{\pm})^2E^{\pm}+2s^{\pm}E^{\pm}-(2+u^{\pm})m^{\pm}-p^{\pm},\\
f_2^{\pm}&=s^{\pm}A^{\pm}+B^{\pm}-2A^{\pm}=(s^{\pm})^2E^{\pm}-2s^{\pm}E^{\pm}+(2-u^{\pm})m^{\pm}-p^{\pm}.
\end{aligned}
\end{equation*}
Thus, one has to prove  $f_1^{\pm} f_2^{\pm}<0$ in order to draw the conclusion (\romannumeral6).

For the left state $\vec U^-$, because $s^{-}\in(-1,s_{\min}^{-}]$ with $s_{\min}^{-}=\frac{u^{-}-c_s^{-}}{1-c_s^{-}u^{-}}<1$, one has
\begin{align*}
f_1^{-}&=(s^{-})^2E^{-}+2s^{-}E^{-}-(2+u^{-})m^{-}-p^{-} \\
&\le (s_{\min}^{-})^2E^{-}+2s_{\min}^{-}E^{-}-(2+u^{-})m^{-}-p^{-} \\
&=(s_{\min}^{-}+u^{-}+2)(s_{\min}^{-}-u^{-})E^{-}-(1+u^{-})^2p^{-} \\
&=-\frac{c_s^{-}(1+u^{-})\big(1-(u^{-})^2\big)\big(2-c_s^{-}(1+u^{-})\big)}
{(1-c_s^{-}u^{-})^2}E^{-}-(1+u^{-})^2p^{-}<0,
\end{align*}
and
\begin{align*}
f_2^{-}&=(s^{-})^2E^{-}-2s^{-}E_{-}+(2-u^{-})m^{-}-p^{-}\\\
&\ge(s_{\min}^{-})^2E^{-}-2s_{\min}^{-}E^{-}+(2-u^{-})m^{-}-p^{-}\\
&=(s_{\min}^{-}+u^{-}-2)(s_{\min}^{-}-u^{-})E^{-}-(1-u^{-})^2p^{-}\\
&=\frac{c_s^{-}(1-u^{-})^2(1+u^{-})\big(2+c_s^{-}(1-u^{-})\big)}{(1-c_s^{-}u^{-})^2}E^{-}-(1-u^{-})^2p^{-}\\
&=C_1\left(c_s^{-}(1+u^{-})\big(2+c_s^{-}(1-u^{-})\big)
\left(\frac{\Gamma}{(c_s^{-})^2\big(1-(u^{-})^2\big)}-1\right)-(1-c_s^{-}u^{-})^2\right)\\
&=C_1\left(\frac{\big(2+c_s^{-}(1-u^{-})\big)\Gamma}
{c_s^{-}(1-u^{-})}-2c_s^{-}(1+u^{-})-(c_s^{-})^2\big(1-(u^{-})^2\big)-(1-c_s^{-}u^{-})^2\right)\\
&=C_1\left(\frac{2\Gamma}{c_s^{-}(1-u^{-})}+\Gamma-(1+c_s^{-})^2\right)\notag\\
&=C_1\left(\frac{2\big(\Gamma-(c_s^{-})^2+(c_s^{-})^2u^{-}\big)}{c_s^{-}(1-u^{-})}
+\Gamma-1-(c_s^{-})^2\right)>0,
\end{align*}
where
$$C_1=\frac{(1-u^{-})^2p^{-}}{(1-c_s^{-}u^{-})^2}>0.$$

Similarly, for the right state $\vec U^+$, the fact that $s^{+}\in [s_{\max}^{+},1)$ with
$s_{\max}^{+}=\frac{u^{+}+c_s^{+}}{1+c_s^{+}u^{+}}>-1$ means
\begin{align*}
f_1^{+}&=(s^{+})^2E^{+}+2s^{+}E^{+}-(2+u^{+})m^{+}-p^{+} \\
&\ge(s_{\max}^{+})^2E^{+}+2s_{\max}^{+}E^{+}-(2+u^{+})m^{+}-p^{+}  \\
&=(s_{\max}^{+}+u^{+}+2)(s_{\max}^{+}-u^{+})E^{+}-(1+u^{+})^2p^{+} \\
&=\frac{c_s^{+}(1+u^{+})^2(1-u^{+})\big(2+c_s^{+}(1+u^{+})\big)}{(1+c_s^{+}u^{+})^2}E^{+}-(1+u^{+})^2p^{+} \\
&=C_2\left(c_s^{+}(1-u^{+})\big(2+c_s^{+}(1+u^{+})\big)\left(\frac{\Gamma}
{(c_s^{+})^2\big(1-(u^{+})^2\big)}-1\right)-(1+c_s^{+}u^{+})^2\right) \\
&=C_2\left(\frac{\big(2+c_s^{+}(1+u^{+})\big)\Gamma}
{c_s^{+}(1+u^{+})}-2c_s^{+}(1-u^{+})-(c_s^{+})^2\big(1-(u^{+})^2\big)-(1+c_s^{+}u^{+})^2\right) \\
&=C_2\left(\frac{2\Gamma}{c_s^{+}(1+u^{+})}+\Gamma-(1+c_s^{+})^2\right) \\
&=C_2\left(\frac{2(\Gamma-(c_s^{+})^2-(c_s^{+})^2u^{+})}{c_s^{+}(1+u^{+})}+\Gamma-1-(c_s^{+})^2\right)>0,
\end{align*}
and
\begin{align*}
f_2^{+}&=(s^{+})^2E^{+}-2s^{+}E^{+}+(2-u^{+})m^{+}-p^{+} \\
&\le(s_{\max}^{+})^2E^{+}-2s_{\max}^{+}E^{+}+(2-u^{+})m^{+}-p^{+}  \\
&=(s_{\max}^{+}+u^{+}-2)(s_{\max}^{+}-u^{+})E^{+}-(1-u^{+})^2p^{+}  \\
&=-\frac{c_s^{+}(1-u^{+})(1-(u^{+})^2)\big(2-c_s^{+}(1-u^{+})\big)}{(1+c_s^{+}u^{+})^2}E^{+}-(1-u^{+})^2p^{+}<0,
\end{align*}
where
$$C_2=\frac{(1+u^{+})^2p^{+}}{(1+c_s^{+}u^{+})^2}>0.$$
In a word, the conclusion (\romannumeral6) holds.

(\romannumeral7) The left hand side of the inequality (\romannumeral7) corresponds to the value  of  the following
quadratic function   $f^{\pm}(s)$  at $s=s^\pm$
\begin{align*}
f^{\pm}(s)&:=(A^{\pm})^2-(B^{\pm})^2-(D^{\pm})^2(s-u^{\pm})^2\notag\\
&=(s-u^{\pm})^2\big((E^{\pm})^2-(D^{\pm})^2-(m^{\pm})^2-(p^{\pm})^2\big)+(p^{\pm})^2(s^2-1).
\end{align*}
Because
\begin{align*}
&(E^{\pm})^2-(D^{\pm})^2-(m^{\pm})^2=\frac{(p^{\pm})^2}{1-(u^{\pm})^2}\left(1-(u^{\pm})^2+\frac{2\Gamma}
{(c_s^{\pm})^2(\Gamma-1)}-\frac{\Gamma^2}{(\Gamma-1)^2}\right)>0,\\
&(E^{\pm})^2-(D^{\pm})^2-(m^{\pm})^2-(p^{\pm})^2=\frac{(p^{\pm})^2}{1-(u^{\pm})^2}\left(\frac{2\Gamma}
{(c_s^{\pm})^2(\Gamma-1)}-\frac{\Gamma^2}{(\Gamma-1)^2}\right)>0,
\end{align*}
 the parabolas $f^{\pm}(s)$ are   convex and symmetric with the lines
$$s=s_0^{\pm}=\frac{u^{\pm}\big[(E^{\pm})^2-(D^{\pm})^2-(m^{\pm})^2-(p^{\pm})^2\big]}{(E^{\pm})^2-(D^{\pm})^2-(m^{\pm})^2}.$$
For the left state $\vec U^-$, one has
\begin{align*}
s_0^{-}-s_{\min}^{-}&=\frac{u^{-}\big[(E^{-})^2-(D^{-})^2-(m^{-})^2-(p^{-})^2\big]}{(E^{-})^2-(D^{-})^2-(m^{-})^2}
-\frac{u^{-}-c_s^{-}}{1-c_s^{-}u^{-}}\\
&=C_3\left(-u^{-}+c_s^{-}(u^{-})^2+\frac{c_s^{-}}{(p^{-})^2}\big[1-(u^{-})^2\big]\big[(E^{-})^2-(D^{-})^2-(m^{-})^2\big]\right)\\
&=C_3\left(-u^{-}+c_s^{-}(u^{-})^2+c_s^{-}\left(1-(u^{-})^2+\frac{2\Gamma}{(c_s^{-})^2(\Gamma-1)}
-\frac{\Gamma^2}{(\Gamma-1)^2}\right)\right)\\
&=C_3\left(-u^{-}+\frac{2}{c_s^{-}}+\frac{2}{c_s^{-}(\Gamma-1)}-\frac{2c_s^{-}}{\Gamma-1}-\frac{c_s^{-}}{(\Gamma-1)^2}\right)\\
&=\frac{C_3}{c_s^{-}(\Gamma-1)^2}\bigg(-(\Gamma-1)^2c_s^{-}u^{-}+2(\Gamma-1)^2+2(\Gamma-1)-(2\Gamma-1)(c_s^{-})^2\bigg)\\
&>\frac{C_3}{c_s^{-}(\Gamma-1)^2}\bigg(-(\Gamma-1)^2+2(\Gamma-1)^2+2(\Gamma-1)-(2\Gamma-1)(\Gamma-1)\bigg)\\
&=\frac{C_3}{c_s^{-}(\Gamma-1)}(2-\Gamma)>0,
\end{align*}
with
$$C_3=\frac{(p^{-})^2}{(1-c_s^{-}u^{-})\big[(E^{-})^2-(D^{-})^2-(m^{-})^2\big]}>0.$$
Hence
 $f^{-}(s)$ is monotonically
decreasing with $s\in(-1,s_{\min}^{-}]$. It means that $f^{-}(s)\ge f^{-}(s_{\min}^{-})$ for any
$s\in(-1,s_{\min}^{-}]$ and thus
$f^{-}(s^{-})\ge f^{-}(s_{\min}^{-})$ since $s^-=\min(s_{\min}^{-},s_{\min}^{+})$.
To get the inequality (\romannumeral7) for the left state $\vec U^-$,
the remaining is to prove $f^{-}(s_{\min}^{-})>0$. In fact, it is true because
\begin{align*}
 f^-(s_{\min}^{-})&=\frac{(c_s^{-})^2\big[1-(u^{-})^2\big]^2}
{(1-c_s^{-}u^{-})^2}\big[(E^{-})^2-(D^{-})^2-(m^{-})^2-(p^{-})^2\big]\\
&~~+(p^{-})^2\left(\frac{(u^{-}-c_s^{-})^2}{(1-c_s^{-}u^{-})^2}-1\right)\\
&=\frac{(p^{-})^2\big[1-(u^{-})^2\big]}{(1-c_s^{-}u^{-})^2}\left(\frac{2\Gamma}{\Gamma-1}
-\frac{\Gamma^2(c_s^{-})^2}{(\Gamma-1)^2}\right)+(p^{-})^2\frac{\big[1-(c_s^{-})^2\big]\big[(u^{-})^2-1\big]}{(1-c_s^{-}u^{-})^2}\\
&=\frac{(p^{-})^2\big[1-(u^{-})^2\big]}{(1-c_s^{-}u^{-})^2}\left(\frac{2\Gamma}{\Gamma-1}-\frac{\Gamma^2(c_s^{-})^2}{(\Gamma-1)^2}+(c_s^{-})^2-1\right)\\
&=\frac{(p^{-})^2\big[1-(u^{-})^2\big]}{(1-c_s^{-}u^{-})^2}\left(\frac{2}{\Gamma-1}+1+\left(1-\frac{\Gamma^2}{(\Gamma-1)^2}\right)(c_s^{-})^2\right)\\
&>\frac{(p^{-})^2\big[1-(u^{-})^2\big]}{(1-c_s^{-}u^{-})^2}\left(\frac{2}{\Gamma-1}+1+(\Gamma-1)\left(1-\frac{\Gamma^2}{(\Gamma-1)^2}\right)\right)\\
&=\frac{(p^{-})^2\big[1-(u^{-})^2\big]}{(1-c_s^{-}u^{-})^2}\left(\frac{2}{\Gamma-1}+1+(\Gamma-1)-\frac{\Gamma^2}{\Gamma-1}\right)>0.
\end{align*}

Similarly, for the right state $\vec U^+$,
$f^{+}(s)$ is monotonically increasing with $s\in[s_{\max}^{+},1)$ and then $f^{+}(s)\ge f^{+}(s_{\max}^{+})$
for any $s\in[s_{\max}^{+},1)$ since
\begin{align*}
s_0^{+}-s_{\max}^{+}&=\frac{u^{+}\big[(E^{+})^2-(D^{+})^2-(m^{+})^2-(p^{+})^2\big]}{(E^{+})^2-(D^{+})^2-(m^{+})^2}-\frac{u^{+}+c_s^{+}}{1+c_s^{+}u^{+}}\\
&=C_4\left(-u^{+}-c_s^{+}(u^{+})^2-\frac{c_s^{+}}{p_{+}^2}\big[1-(u^{+})^2\big]\big[(E^{+})^2-(D^{+})^2-(m^{+})^2\big]\right)\\
&=C_4\left(-u^{+}(1+c_s^{+}u^{+})-c_s^{+}\left(1-(u^{+})^2+\frac{2\Gamma}{(c_s^{+})^2(\Gamma-1)}
-\frac{\Gamma^2}{(\Gamma-1)^2}\right)\right)\\
&=C_4\left(-u^{+}-\frac{2}{c_s^{+}}-\frac{2}{c_s^{+}(\Gamma-1)}+\frac{2c_s^{+}}{\Gamma-1}+\frac{c_s^{+}}{(\Gamma-1)^2}\right)\\
&=\frac{C_4}{c_s^{+}(\Gamma-1)^2}\bigg(-(\Gamma-1)^2c_s^{+}u^{+}-2(\Gamma-1)^2-2(\Gamma-1)+(2\Gamma-1)(c_s^{+})^2\bigg)\\
&<\frac{C_4}{c_s^{+}(\Gamma-1)^2}\bigg((\Gamma-1)^2-2(\Gamma-1)^2-2(\Gamma-1)+(2\Gamma-1)(\Gamma-1)\bigg)<0,
\end{align*}
with
$$C_4=\frac{(p^{+})^2}{(1+c_s^{+}u^{+})\big[(E^{+})^2-(D^{+})^2-(m^{+})^2\big]}>0.$$
To get the inequality (\romannumeral7) for the left state $\vec U^+$,
the remaining is to prove   $f^{+}(s_{\max}^{+})>0$. It is true since
\begin{align*}
 f^{+}(s_{\max}^{+})&=\frac{(c_s^{+})^2\big[1-(u^{+})^2\big]^2}{(1+c_s^{+}u^{+})^2}
\big[(E^{+})^2-(D^{+})^2-(m^{+})^2-(p^{+})^2\big]\\
&~~+(p^{+})^2\left(\frac{(u^{+}+c_s^{+})^2}{(1+c_s^{+}u^{+})^2}-1\right)\\
&=\frac{(p^{+})^2\big[1-(u^{+})^2\big]}{(1+c_s^{+}u^{+})^2}\left(\frac{2\Gamma}{\Gamma-1}
-\frac{\Gamma^2(c_s^{+})^2}{(\Gamma-1)^2}\right)+(p^{+})^2\frac{\big[1-(c_s^{+})^2\big]\big[(u^{+})^2-1\big]}{(1+c_s^{+}u^{+})^2}\\
&=\frac{(p^{+})^2\big[1-(u^{+})^2\big]}{(1+c_s^{+}u^{+})^2}\left(\frac{2\Gamma}{\Gamma-1}
-\frac{\Gamma^2(c_s^{+})^2}{(\Gamma-1)^2}+(c_s^{+})^2-1\right)\\
&=\frac{(p^{+})^2\big[1-(u^{+})^2\big]}{(1+c_s^{+}u^{+})^2}\left(\frac{2}{\Gamma-1}+1+\left(1-\frac{\Gamma^2}
{(\Gamma-1)^2}\right)(c_s^{+})^2\right)\\
&>\frac{(p^{+})^2\big[1-(u^{+})^2\big]}{(1+c_s^{+}u^{+})^2}\left(\frac{2}{\Gamma-1}+1+(\Gamma-1)
\left(1-\frac{\Gamma^2}{(\Gamma-1)^2}\right)\right)\\
&=\frac{(p^{+})^2\big[1-(u^{+})^2\big]}{(1+c_s^{+}u^{+})^2}\left(\frac{2}{\Gamma-1}+1+(\Gamma-1)-\frac{\Gamma^2}{\Gamma-1}\right)\\
&=\frac{(p^{+})^2\big[1-(u^{+})^2\big]}{(1+c_s^{+}u^{+})^2}\left(\frac{2}{\Gamma-1}-\frac{\Gamma}{\Gamma-1}\right)>0.
\end{align*}
It completes the proof.
\qed

\renewcommand{\thesection}{\Alph{section}}
\section{Proof of Lemma \ref{lem4}}\label{appendix-B}
\renewcommand{\thesection}{\Alph{section}}
This appendix discusses the proof of Lemma \ref{lem4}.
Here only the properties  (\romannumeral6) and (\romannumeral7) are proved in detail, 
since the proof of the properties (\romannumeral1)-(\romannumeral5)  
is similar to that of Lemma \ref{lem3}.  
For the sake of simplicity, denote
$
s_{\min}^{\pm}=s_{\min}(\vec{U}^{\pm})$, $s_{\max}^{\pm}=s_{\max}(\vec{U}^{\pm})$,
and define
\begin{equation*}
\begin{aligned}
&\alpha^{\pm}=\sqrt{(1-|\vec{u}^{\pm}|^2)\big[1-(u_n^{\pm})^2+(c_s^{\pm}u_n^{\pm})^2-(c_s^{\pm})^2|\vec{u}^{\pm}|^2\big]},\\
&\beta^{\pm}=\frac{2\Gamma(\Gamma-1)-\Gamma^2(c_s^{\pm})^2}{(\Gamma-1)^2}.
\end{aligned}
\end{equation*}
Due to \eqref{ws2d}, $s_{\min}^{\pm}$ and $s_{\max}^{\pm}$ can be  expressed as
\begin{equation*}
s_{\min}^{\pm}=\frac{(1-(c_s^{\pm})^2)u_n^{\pm}-c_s^{\pm}\alpha^{\pm}}{1-(c_s^{\pm})^2|\vec{u}^{\pm}|^2},~~
s_{\max}^{\pm}=\frac{(1-(c_s^{\pm})^2)u_n^{\pm}+c_s^{\pm}\alpha^{\pm}}{1-(c_s^{\pm})^2|\vec{u}^{\pm}|^2}.
\end{equation*}
Moreover, it can be verified that
\begin{equation*}
1-|\vec{u}^{\pm}|^2\le\alpha^{\pm}\le1-|u_n^{\pm}|^2,~~\beta^{\pm}>1-(c_s^{\pm})^2.
\end{equation*}

(\romannumeral6) The left hand side of (\romannumeral6) can be decomposed into the two parts
\begin{equation*}
4(A^{\pm})^2-(s^{\pm}A^{\pm}+B^{\pm})^2=-(s^{\pm}A^{\pm}+B^{\pm}
+2A^{\pm})(s^{\pm}A^{\pm}+B^{\pm}-2A^{\pm})=-f_1^{\pm}\cdot f_2^{\pm},
\end{equation*}
where
\begin{equation*}
\begin{aligned}
f_1^{\pm}&=s^{\pm}A^{\pm}+B^{\pm}+2A^{\pm}=(s^{\pm})^2E^{\pm}+2s^{\pm}E^{\pm}-(2+u_n^{\pm})m_n^{\pm}-p^{\pm},\\
f_2^{\pm}&=s^{\pm}A^{\pm}+B^{\pm}-2A^{\pm}=(s^{\pm})^2E^{\pm}-2s^{\pm}E^{\pm}+(2-u_n^{\pm})m_n^{\pm}-p^{\pm}.
\end{aligned}
\end{equation*}
Thus the remaining is  to prove that  $f_1^{\pm} f_2^{\pm}<0$.

For the left state $\vec U^-$, because $s^{-}\in(-1,s_{\min}^{-}]$ with
$s_{\min}^{-}=\frac{u_n^{-}(1-(c_s^{-})^2)-c_s^{-}\alpha^{-}}{1-(c_s^{-})^2|\vec{u}^{-}|^2}<u_n^{-}$,
one has
\begin{align*}
f_1^{-}&=(s^{-})^2E^{-}+2s^{-}E^{-}-(2+u_n^{-})m_n^{-}-p^{-}\\
&\le (s_{\min}^{-})^2E_{-}+2s_{\min}^{-}E^{-}-(2+u_n^{-})m_n^{-}-p^{-}\\
&=\big[(s_{\min}^{-})^2+2s_{\min}^{-}-u_n^{-}(2+u_n^{-})\big]E^{-}-(1+u_n^{-})^2p^{-}\\
&=(s_{\min}^{-}+u_n^{-}+2)(s_{\min}^{-}-u_n^{-})E^{-}-(1+u_n^{-})^2p^{-}<0,
\end{align*}
and
\begin{align*}
f_2^{-}&=(s^{-})^2E^{-}-2s^{-}E^{-}+(2-u_n^{-})m_n^{-}-p^{-}\\
&\ge(s_{\min}^{-})^2E^{-}-2s_{\min}^{-}E^{-}+(2-u_n^{-})m_n^{-}-p^{-}\\
&=\big[(1-s_{\min}^{-})^2-(1-u_n^{-})^2\big]E^{-}-(1-u_n^{-})^2p^{-}\\
&=\frac{1}{(1-u_n^{-})^2}\left(\left(\frac{1-s_{\min}^{-}}{1-u_n^{-}}\right)^2E^{-}-(E^{-}+p^{-})\right)\\
&=\frac{1}{(1-u_n^{-})^2}\left(\left(\frac{1-u_n^{-}+c_s^{-}\alpha^{-}
+(c_s^{-})^2(u_n^{-}-|\vec{u}^{-}|^2)}{(1-u_n^{-})(1-(c_s^{-})^2|\vec{u}^{-}|^2)}\right)^2E^{-}-(E^{-}+p^{-})\right)\\
&>\frac{1}{(1+|\vec{u}^{-}|)^2}\left(\left(\frac{1+|\vec{u}^{-}|+c_s^{-}(1-|\vec{u}^{-}|^2)
+(c_s^{-})^2|\vec{u}^{-}|(-1-|\vec{u}^{-}|)}{(1+|\vec{u}^{-}|)
(1-(c_s^{-})^2|\vec{u}^{-}|^2)}\right)^2E^{-}-(E^{-}+p^{-})\right)\\
&=\frac{1}{(1+|\vec{u}^{-}|)^2}\left(\left(\frac{1+c_s^{-}}{1+c_s^{-}|\vec{u}^{-}|}\right)^2E^{-}-(E^{-}+p^{-})\right)\\
&=\frac{p^{-}}{(1+|\vec{u}^{-}|)^2(1+c_s^{-}|\vec{u}^{-}|)^2}\left((1+c_s^{-})^2\frac{\Gamma-(c_s^{-})^2(1-|\vec{u}^{-}|^2)}
{(c_s^{-})^2(1-|\vec{u}^{-}|^2)}-\frac{\Gamma(1+c_s^{-}|\vec{u}^{-}|)^2}{(c_s^{-})^2(1-|\vec{u}^{-}|^2)}\right)\\
&=C_5\big[2\Gamma+\Gamma c_s^{-}(1+|\vec{u}^{-}|)-c_s^{-}(1+|\vec{u}^{-}|)(1+c_s^{-})^2\big]\\
&=C_5\big[2\Gamma+c_s^{-}(1+|\vec{u}^{-}|)(\Gamma-(1+c_s^{-})^2)\big]\\
&>C_5\big[2\Gamma-2(c_s^{-})^2(1+|\vec{u}^{-}|)\big]
 >2C_5\big[\Gamma-(\Gamma-1)(1+|\vec{u}^{-}|)\big]\\
&=2C_5\big[1-(\Gamma-1)|\vec{u}^{-}|\big]>2C_5\big(1-|\vec{u}^{-}|\big)>0,
\end{align*}
where
$$C_5=\frac{p^{-}}{c_s^{-}(1+|\vec{u}^{-}|)^3(1+c_s^{-}|\vec{u}^{-}|)^2}>0.$$

Similarly, for the right state $\vec U^+$, because $s^{+}\in[s_{\max}^{+},1)$ with
$s_{\max}^{+}=\frac{u_n^{+}\big(1-(c_s^{+})^2\big)+c_s^{+}\alpha^{+}}{1-(c_s^{+})^2|\vec{u}^{+}|^2}>u_n^{+}$,
one has
\begin{align*}
f_1^{+}&=(s^{+})^2E^{+}+2s^{+}E^{+}-(2+u_n^{+})m_n^{+}-p^{+}\\
&=\big[(s^{+})^2+2s^{+}-u_n^{+}(2+u_n^{+})\big]E^{+}-(1+u_n^{+})^2p^{+}\\
&\ge\big[(s_{\max}^{+})^2+2s_{\max}^{+}-u_n^{+}(2+u_n^{+})\big]E^{+}-(1+u_n^{+})^2p^{+}\\
&=\big[(1+s_{\max}^{+})^2-(1+u_n^{+})^2\big]E^{+}-(1+u_n^{+})^2p^{+}\\
&=\frac{1}{(1+u_n^{+})^2}\left(\left(\frac{1+s_{\max}^{+}}{1+u_n^{+}}\right)^2E^{+}-(E^{+}+p^{+})\right)\\
&=\frac{1}{(1+u_n^{+})^2}\left(\left(\frac{1+u_n^{+}+c_s^{+}\alpha^{+}-(c_s^{+})^2(u_n^{+}+|\vec{u}^{+}|^2)}
{(1+u_n^{+})(1-(c_s^{+})^2|\vec{u}^{+}|^2)}\right)^2E^{+}-(E^{+}+p^{+})\right)\\
&>\frac{1}{(1+|\vec{u}^{+}|)^2}\left(\left(\frac{1+|\vec{u}^{+}|
+c_s^{+}(1-|\vec{u}^{+}|^2)-(c_s^{+})^2|\vec{u}^{+}|(1+|\vec{u}^{+}|)}
{(1+|\vec{u}^{+}|)(1-(c_s^{+})^2|\vec{u}^{+}|^2)}\right)^2E^{+}-(E^{+}+p^{+})\right)\\
&=\frac{1}{(1+|\vec{u}^{+}|)^2}\left(\left(\frac{1+c_s^{+}}{1+c_s^{+}|\vec{u}^{+}|}\right)^2E^{+}-(E^{+}+p^{+})\right)\\
&=\frac{p^{+}}{(1+|\vec{u}^{+}|)^2(1+c_s^{+}|\vec{u}^{+}|)^2}\left((1+c_s^{+})^2\frac{\Gamma-(c_s^{+})^2(1-|\vec{u}^{+}|^2)}
{(c_s^{+})^2(1-|\vec{u}^{+}|^2)}-\frac{\Gamma(1+c_s^{+}|\vec{u}^{+}|)^2}{(c_s^{+})^2(1-|\vec{u}^{+}|^2)}\right)\\
&=C_6\big[2\Gamma+\Gamma c_s^{+}(1+|\vec{u}^{+}|)-c_s^{+}(1+|\vec{u}^{+}|)(1+c_s^{+})^2\big]\\
&=C_6\big[2\Gamma+c_s^{+}(1+|\vec{u}^{+}|)(\Gamma-(1+c_s^{+})^2)\big]\\
&>C_6\big[2\Gamma-2(c_s^{+})^2(1+|\vec{u}^{+}|)\big]
 >2C_6\big[\Gamma-(\Gamma-1)(1+|\vec{u}^{+}|)\big]\\
&=2C_6\big[1-(\Gamma-1)|\vec{u}^{+}|\big]>2C_6(1-|\vec{u}^{+}|)>0,
\end{align*}
and
\begin{align*}
f_2^{+}&=(s^{+})^2E^{+}-2s^{+}E^{+}+(2-u_n^{+})m_n^{+}-p^{+}\\
&=\big[(s^{+})^2-2s^{+}+u_n^{+}(2-u_n^{+})\big]E^{+}-(1-u_n^{+})^2p^{+}\\
&\le\big[(s_{\max}^{+})^2-2s_{\max}^{+}+u_n^{+}(2-u_n^{+})\big]E^{+}-(1-u_n^{+})^2p^{+}\\
&=(s_{\max}^{+}+u_n^{+}-2)(s_{\max}^{+}-u_n^{+})E^{+}-(1-u_n^{+})^2p^{+}<0,
\end{align*}
where
$$C_6=\frac{p^{+}}{c_s^{+}(1+|\vec{u}^{+}|)^3(1+c_s^{+}|\vec{u}^{+}|)^2}>0.$$
Therefore, the inequality (\romannumeral6) is proved.

({\romannumeral7})  
Let us consider the quadratic functions in terms of $s$
\begin{align*}
f^{\pm}(s)&=(A^{\pm})^2-(B^{\pm})^2-(D^{\pm})^2(s-u_n^{\pm})^2-(m_{\tau}^{\pm})^2(s-u_n^{\pm})^2\notag\\
&=(s-u_n^{\pm})^2\big[(E^{\pm})^2-(D^{\pm})^2-|\vec{m}^{\pm}|^2-(p^{\pm})^2\big]+(s^2-1)(p^{\pm})^2,
\end{align*}
which
are symmetric with the lines
$$s=s_0^{\pm}=\frac{u_n^{\pm}\big[(E^{\pm})^2-(D^{\pm})^2-|\vec{m}^{\pm}|^2
-(p^{\pm})^2\big]}{(E^{\pm})^2-(D^{\pm})^2-|\vec{m}^{\pm}|^2}.$$
Moreover, $f^{\pm}(s)$ are convex because
\begin{align*}
(E^{\pm})^2-(D^{\pm})^2-|\vec{m}^{\pm}|^2
&=\frac{(p^{\pm})^2}{1-|\vec{u}^{\pm}|^2}\left(1-|\vec{u}^{\pm}|^2
+\frac{2\Gamma}{(c_s^{\pm})^2(\Gamma-1)}-\frac{\Gamma^2}{(\Gamma-1)^2}\right) \\
&>\frac{(p^{\pm})^2}{1-|\vec{u}^{\pm}|^2}\left(1-|\vec{u}^{\pm}|^2
+\frac{2\Gamma}{(\Gamma-1)^2}-\frac{\Gamma^2}{(\Gamma-1)^2}\right)>0, 
\end{align*}
and
\begin{align*}
(E^{\pm})^2-(D^{\pm})^2-|\vec{m}^{\pm}|^2-(p^{\pm})^2
&=\frac{(p^{\pm})^2}{1-|\vec{u}^{\pm}|^2}\left(\frac{2\Gamma}{(c_s^{\pm})^2(\Gamma-1)}
-\frac{\Gamma^2}{(\Gamma-1)^2}\right) \\
&>\frac{(p^{\pm})^2}{1-|\vec{u}^{\pm}|^2}\left(\frac{2\Gamma}
{(\Gamma-1)^2}-\frac{\Gamma^2}{(\Gamma-1)^2}\right)>0.
\end{align*}
For the left state $\vec U^-$, one has
\begin{align*}
s_0^{-}-s_{\min}^{-}&=\frac{u_n^{-}\big[(E^{-})^2-(D^{-})^2-|\vec{m}^{-}|^2-(p^{-})^2\big]}{(E^{-})^2-(D^{-})^2-|\vec{m}^{-}|^2}
-\frac{u_n^{-}(1-(c_s^{-})^2)-c_s^{-}\alpha^{-}}{1-(c_s^{-})^2|\vec{u}^{-}|^2}\\
&=C_7\bigg(-u_n^{-}(1-(c_s^{-})^2|\vec{u}^{-}|^2)\bigg)+(p^{-})^2\big[u_n^{-}(c_s^{-})^2(1-|\vec{u}^{-}|^2)+c_s^{-}\alpha^{-}\big]\\
&=C_7\left(u_n^{-}\big[\beta^{-}-1+(c_s^{-})^2\big]+\frac{\alpha^{-}}{1-|\vec{u}^{-}|^2}\left(c_s^{-}(1-|\vec{u}^{-}|^2)
+\frac{\beta^{-}}{c_s^{-}}\right)\right)\\
&>C_7\left(-\big[\beta^{-}-1+(c_s^{-})^2\big]+c_s^{-}(1-|\vec{u}^{-}|^2)+\frac{\beta^{-}}{c_s^{-}}\right)\\
&>C_7\left(1-(c_s^{-})^2-\beta^{-}+c_s^{-}(1-|\vec{u}^{-}|^2)+\frac{\beta^{-}}{c_s^{-}}\right)\\
&=C_7\left(1-(c_s^{-})^2+c_s^{-}(1-|\vec{u}^{-}|^2)+\left(\frac{1}{c_s^{-}}-1\right)\beta^{-}\right)>0,
\end{align*}
which implies that $f^{-}(s)$ is monotonically
decreasing with $s\in(-1,s_{\min}^{-}]$, where
$$C_7=\frac{(p^{-})^2}{(1-(c_s^{-})^2|\vec{u}^{-}|^2)\big[(E^{-})^2-(D^{-})^2-|\vec{m}^{-}|^2\big]}>0.$$
Thus $f^{-}(s)\ge f^{-}(s_{\min}^{-})$ for $s\in(-1,s_{\min}^{-}]$, so that
$f^{-}(s^{-})\ge f^{-}(s_{\min}^{-})$.
The remaining is to prove $f^{-}(s_{\min}^{-})>0$. In fact, one has
\begin{align*}
f^{-}(s_{\min}^{-})&=\left(s_{\min}^{-}-u_n^{-}\right)^2\big[(E^{-})^2-(D^{-})^2-|\vec{m}^{-}|^2-(p^{-})^2\big]
+(p^{-})^2\big[(s_{\min}^{-})^2-1\big]\\
&=C_8\left(\frac{\beta^{-}\big[c_s^{-}u_n^{-}(1-|\vec{u}^{-}|^2)+\alpha^{-}\big]^2}{1-|\vec{u}^{-}|^2}
+\big[(1-(c_s^{-})^2)u_n^{-}-c_s^{-}\alpha^{-}\big]^2-\big[1-(c_s^{-})^2|\vec{u}^{-}|^2\big]^2\right)\\
&=C_8\bigg(\beta^{-}\left((c_s^{-}u_n^{-})^2\big(1-|\vec{u}^{-}|^2\big)+2\alpha^{-}c_s^{-}u_n^{-}
+\frac{(\alpha^{-})^2}{1-|\vec{u}^{-}|^2}\right)+\big[(1-(c_s^{-})^2)u_n^{-}-c_s^{-}\alpha^{-}\big]^2\\
&~~~-\big[1-(c_s^{-})^2|\vec{u}^{-}|^2\big]^2\bigg)\\
&=C_8\bigg((c_s^{-}u_n^{-})^2\big[\beta^{-}-1+(c_s^{-})^2\big]-\beta^{-}(c_s^{-}u_n^{-})^2|\vec{u}^{-}|^2
+2\alpha^{-}c_s^{-}u_n^{-}\big[\beta^{-}-1+(c_s^{-})^2\big]\\
&~~~+(\alpha^{-})^2\left(\frac{\beta^{-}}{1-|\vec{u}^{-}|^2}+(c_s^{-})^2\right)+(u_n^{-})^2-(c_s^{-}u_n^{-})^2
-\big[1-(c_s^{-})^2|\vec{u}^{-}|^2\big]^2\bigg)\\
&=C_8\bigg(\big[\beta^{-}-1+(c_s^{-})^2\big]\big[(c_s^{-}u_n^{-})^2+2\alpha^{-}c_s^{-}u_n^{-}+(\alpha^{-})^2\big]
-\beta^{-}(c_s^{-}u_n^{-})^2|\vec{u}^{-}|^2\\
&~~~+(\alpha^{-})^2\left(1+\frac{\beta^{-}|\vec{u}^{-}|^2}{1-|\vec{u}^{-}|^2}\right)+(1-(c_s^{-})^2)(u_n^{-})^2
-\big[1-(c_s^{-})^2|\vec{u}^{-}|^2\big]^2\bigg)\\
&=C_8\left(\big[\beta^{-}-1+(c_s^{-})^2\big](\alpha^{-}+c_s^{-}u_n^{-})^2
+\left(1+\frac{\beta^{-}|\vec{u}^{-}|^2}{1-|\vec{u}^{-}|^2}\right)(\alpha^{-})^2\right.\\
&~~~+(u_n^{-})^2-(c_s^{-}u_n^{-})^2-1+2(c_s^{-})^2|\vec{u}^{-}|^2-(c_s^{-})^4|\vec{u}^{-}|^4
-\beta^{-}(c_s^{-}u_n^{-})^2|\vec{u}^{-}|^2\bigg)\\
&=C_8\bigg(\big[\beta^{-}-1+(c_s^{-})^2\big](\alpha^{-}+c_s^{-}u_n^{-})^2+(c_s^{-})^2|\vec{u}^{-}|^2-(c_s^{-})^4|\vec{u}^{-}|^4
-(c_s^{-}u_n^{-})^2|\vec{u}^{-}|^2\\
&~~~+(\beta^{-}-1)|\vec{u}^{-}|^2\big[1-(u_n^{-})^2-(c_s^{-})^2|\vec{u}^{-}|^2\big]\bigg)\\
&=C_8\bigg(\big[\beta^{-}-1+(c_s^{-})^2\big](\alpha^{-}+c_s^{-}u_n^{-})^2
+|\vec{u}^{-}|^2\big[\beta^{-}-1+(c_s^{-})^2\big]\big[1-(u_n^{-})^2-(c_s^{-})^2|\vec{u}^{-}|^2\big]\bigg)\\
&=C_8\big[\beta^{-}-1+(c_s^{-})^2\big]\bigg((c_s^{-}u_n^{-}+\alpha^{-})^2+|\vec{u}^{-}|^2\big[1-(u_n^{-})^2-(c_s^{-})^2|\vec{u}^{-}|^2\big]\bigg),
\end{align*}
where
$$C_8=\frac{(p^{-})^2}{\big[1-(c_s^{-})^2|\vec{u}^{-}|^2\big]^2}>0,$$
and
\begin{align*}
&~~~(c_s^{-}u_n^{-}+\alpha^{-})^2+|\vec{u}^{-}|^2\big(1-(u_n^{-})^2-(c_s^{-})^2|\vec{u}^{-}|^2)\\
&=(c_s^{-}u_n^{-})^2+(1-|\vec{u}^{-}|^2)\big[1-(u_n^{-})^2-(c_s^{-})^2|\vec{u}^{-}|^2+(c_s^{-}u_n^{-})^2\big]+2\alpha^{-}c_s^{-}u_n^{-}\\
&~~~+|\vec{u}^{-}|^2\big[1-(u_n^{-})^2-(c_s^{-})^2|\vec{u}^{-}|^2\big]\\
&=(c_s^{-}u_n^{-})^2(1-|\vec{u}^{-}|^2)+\big[1-(u_n^{-})^2-(c_s^{-})^2|\vec{u}^{-}|^2+(c_s^{-}u_n^{-})^2\big]+2\alpha^{-}c_s^{-}u_n^{-}\\
&\ge(c_s^{-}u_n^{-})^2(1-|\vec{u}^{-}|^2)+\big[1-(u_n^{-})^2-(c_s^{-})^2|\vec{u}^{-}|^2+(c_s^{-}u_n^{-})^2\big]-2\alpha^{-}c_s^{-}|u_n^{-}|\\
&=(c_s^{-}u_n^{-})^2(1-|\vec{u}^{-}|^2)+\big[1-(u_n^{-})^2-(c_s^{-})^2|\vec{u}^{-}|^2+(c_s^{-}u_n^{-})^2\big]\\
&~~~-2c_s^{-}|u_n^{-}|\sqrt{(1-|\vec{u}^{-}|^2)\big[1-(u_n^{-})^2-(c_s^{-})^2|\vec{u}^{-}|^2+(c_s^{-}u_n^{-})^2\big]}\\
&=\bigg(c_s^{-}u_n^{-}\sqrt{1-|\vec{u}^{-}|^2}-\sqrt{1-(u_n^{-})^2-(c_s^{-})^2|
\vec{u}^{-}|^2+(c_s^{-}u_n^{-})^2}\bigg)^2\ge0.
\end{align*}
Hence it is true that $f^{-}(s_{\min}^{-})>0$ and thus
the inequality  (\romannumeral7) for $\vec U^-$ is proved.

For the right state $\vec U^+$,   since
\begin{align*}
s_0^{+}-s_{\max}^{+}&=\frac{u_n^{+}\big[(E^{+})^2-(D^{+})^2-|\vec{m}^{+}|^2-(p^{+})^2\big]}{(E^{+})^2-(D^{+})^2-|\vec{m}^{+}|^2}
-\frac{u_n^{+}\big(1-(c_s^{+})^2\big)+c_s^{+}\alpha^{+}}{1-(c_s^{+})^2|\vec{u}^{+}|^2}\\
&=C_9\bigg(-u_n^{+}\big[1-(c_s^{+})^2|\vec{u}^{+}|^2\big]\bigg)+(p^{+})^2\big[u_n^{+}(c_s^{+})^2(1-|\vec{u}^{+}|^2)
-c_s^{+}\alpha^{+}\big]\\
&=C_9\left(u_n^{+}\big[\beta^{+}-1+(c_s^{+})^2\big]-\frac{\alpha^{+}}{1-|\vec{u}^{+}|^2}\left(c_s^{+}(1-|\vec{u}^{+}|^2)
+\frac{\beta^{+}}{c_s^{+}}\right)\right)\\
&<C_9\left(\beta^{+}-1+(c_s^{+})^2-\left(c_s^{+}(1-|\vec{u}^{+}|^2)-\frac{\beta^{+}}{c_s^{+}}\right)\right)\\
&=C_9\left(\left(1-\frac{1}{c_s^{+}}\right)\beta^{+}-(1-c_s^{+})-c_s^{+}(1-|\vec{u}^{+}|^2)\right)<0,
\end{align*}
with
$$C_9=\frac{(p^{+})^2}{\big[1-(c_s^{+})^2|\vec{u}^{+}|^2\big]\big[(E^{+})^2-(D^{+})^2-|\vec{m}^{+}|^2\big]}>0,$$
$f^{+}(s)$ is monotonically increasing
 with $s\in[s^{+},1)\subset[s_{\max}^{+},1)$
and then we have $f^{+}(s^{+})>f^{+}(s_{\max}^{+})$.
Hence to prove the inequality  (\romannumeral7) for $\vec U^+$, it suffices to prove $f^{+}(s_{\max}^{+})>0$.
In fact, one has
\begin{align*}
f^{+}(s_{\max}^{+})&=(s_{\max}^{+}-u_n^{+})^2\big[(E^{+})^2-(D^{+})^2-|\vec{m}^{+}|^2-(p^{+})^2\big]+(p^{+})^2\big[(s_{\max}^{+})^2-1\big]\\
&=C_{10}\left(\frac{\beta^{+}\big[c_s^{+}u_n^{+}(1-|\vec{u}^{+}|^2)-\alpha^{+}\big]^2}{1-|\vec{u}^{+}|^2}
+\big[u_n^{+}(1-(c_s^{+})^2)+c_s^{+}\alpha^{+}\big]^2-\big[1-(c_s^{+})^2|\vec{u}^{+}|^2\big]^2\right)\\
&=C_{10}\left(\beta^{+}(c_s^{+}u_n^{+})^2+\big[(\alpha^{+})^2-2\alpha^{+}c_s^{+}u_n^{+}\big]\big[\beta^{+}-1+(c_s^{+})^2\big]
+\left(1+\frac{\beta^{+}|\vec{u}^{+}|^2}{1-|\vec{u}^{+}|^2}\right)(\alpha^{+})^2\right.\\
&~~~+(u_n^{+})^2-2(c_s^{+}u_n^{+})^2+(c_s^{+})^4(u_n^{+})^2-1+2(c_s^{+})^2|\vec{u}^{+}|^2
-(c_s^{+})^4|\vec{u}^{+}|^4-\beta^{+}(c_s^{+}u_n^{+})^2|\vec{u}^{+}|^2\bigg)\\
&=C_{10}\left(\big[\beta^{+}-1+(c_s^{+})^2\big]\big[(c_s^{+}u_n^{+})^2-2\alpha^{+}c_s^{+}u_n^{+}+(\alpha^{+})^2\big]
+\left(1+\frac{\beta^{+}|\vec{u}^{+}|^2}{1-|\vec{u}^{+}|^2}\right)(\alpha^{+})^2\right)\\
&~~~+(u_n^{+})^2-(c_s^{+}u_n^{+})^2-1+2(c_s^{+})^2|\vec{u}^{+}|^2-(c_s^{+})^4|\vec{u}^{+}|^4-\beta^{+}(c_s^{+}u_n^{+})^2|\vec{u}^{+}|^2\bigg)\\
&=C_{10}\left((\beta^{+}-1+(c_s^{+})^2)(c_s^{+}u_n^{+}-\alpha^{+})^2
+\left(1+\frac{\beta^{+}|\vec{u}^{+}|^2}{1-|\vec{u}^{+}|^2}\right)(\alpha^{+})^2+(u_n^{+})^2-(c_s^{+}u_n^{+})^2-1\right.\\
&~~~+2(c_s^{+})^2|\vec{u}^{+}|^2-(c_s^{+})^4|\vec{u}^{+}|^4-\beta^{+}(c_s^{+}u_n^{+})^2|\vec{u}^{+}|^2\bigg)\\
&=C_{10}\big[\beta^{+}-1+(c_s^{+})^2\big]\bigg(\big(c_s^{+}u_n^{+}-\alpha^{+}\big)^2
+|\vec{u}^{+}|^2\big[1-(u_n^{+})^2-(c_s^{+})^2|\vec{u}^{+}|^2\big]\bigg),
\end{align*}
where
$$C_{10}=\frac{(p^{+})^2}{\big[1-(c_s^{+})^2|\vec{u}^{+}|^2\big]^2}>0,$$
and
\begin{align*}
&~~~(c_s^{+}u_n^{+}-\alpha^{+})^2+|\vec{u}^{+}|^2\big[1-(u_n^{+})^2-(c_s^{+})^2|\vec{u}^{+}|^2\big]\\
&=(c_s^{+}u_n^{+})^2+(\alpha^{+})^2-2\alpha^{+}c_s^{+}u_n^{+}
+|\vec{u}^{+}|^2\big[1-(u_n^{+})^2-(c_s^{+})^2|\vec{u}^{+}|^2\big]\\
&=(c_s^{+}u_n^{+})^2+(1-|\vec{u}^{+}|^2)\big[1-(u_n^{+})^2-(c_s^{+})^2|\vec{u}^{+}|^2+(c_s^{+}u_n^{+})^2\big]-2\alpha^{+}c_s^{+}u_n^{+}\\
&~~~+|\vec{u}^{+}|^2\big[1-(u_n^{+})^2-(c_s^{+})^2|\vec{u}^{+}|^2\big]\\
&=(c_s^{+}u_n^{+})^2(1-|\vec{u}^{+}|^2)+\big[1-(u_n^{+})^2-(c_s^{+})^2|\vec{u}^{+}|^2+(c_s^{+}u_n^{+})^2\big]-2\alpha^{+}c_s^{+}u_n^{+}\\
&\ge(c_s^{+}u_n^{+})^2(1-|\vec{u}^{+}|^2)+\big[1-(u_n^{+})^2-(c_s^{+})^2|\vec{u}^{+}|^2+(c_s^{+}u_n^{+})^2\big]-2\alpha^{+}c_s^{+}|u_n^{+}|\\
&=(c_s^{+}u_n^{+})^2(1-|\vec{u}^{+}|^2)+\big[1-(u_n^{+})^2-(c_s^{+})^2|\vec{u}^{+}|^2+(c_s^{+}u_n^{+})^2\big]\\
&~~~-2c_s^{+}|u_n^{+}|\sqrt{(1-|\vec{u}^{+}|^2)\big[1-(u_n^{+})^2-(c_s^{+})^2|\vec{u}^{+}|^2+(c_s^{+}u_n^{+})^2\big]}\\
&=\bigg(c_s^{+}u_n^{+}\sqrt{1-|\vec{u}^{+}|^2}-\sqrt{1-(u_n^{+})^2-(c_s^{+})^2|\vec{u}^{+}|^2+(c_s^{+}u_n^{+})^2}\bigg)^2\ge0.
\end{align*}
Hence it is true that $f^{+}(s_{\max}^{+})>0$ and thus the inequality  (\romannumeral7) for $\vec U^+$
holds.
The proof is completed.
\qed
\end{appendix}

\bibliographystyle{plain}

\end{document}